\documentclass[11pt,a4paper,reqno]{amsart}
\usepackage[left=30mm,top=40mm,right=30mm,bottom=35mm]{geometry}
\usepackage{amsfonts}
\usepackage{amsmath}
\usepackage{enumitem}
\usepackage{amssymb}
\usepackage{amsthm}
\usepackage{times}
\usepackage{color}
\usepackage[T1]{fontenc}
\usepackage{inputenc}
\usepackage{xcolor}
\usepackage{comment}
\usepackage{hyperref}
\hypersetup{
 colorlinks=true,
 linkcolor=blue,
 citecolor=blue,
 filecolor=blue,
 urlcolor=blue
}
\usepackage[nameinlink,capitalize,noabbrev]{cleveref}
\usepackage{aliascnt}
\usepackage{mathrsfs}
\usepackage{booktabs}
\usepackage{tikz}
\usepackage{stmaryrd}
\usepackage{float}

\newtheorem{theorem}{Theorem}[section]
\newtheorem*{theorem*}{Theorem}
\newtheorem*{maintheorem}{Main Theorem}

\newaliascnt{corollary}{theorem}
\newtheorem{corollary}[corollary]{Corollary}
\aliascntresetthe{corollary}
\crefname{corollary}{Corollary}{Corollaries}
\Crefname{corollary}{Corollary}{Corollaries}

\newaliascnt{lemma}{theorem}
\newtheorem{lemma}[lemma]{Lemma}
\aliascntresetthe{lemma}
\crefname{lemma}{Lemma}{Lemmas}
\Crefname{lemma}{Lemma}{Lemmas}

\newaliascnt{proposition}{theorem}
\newtheorem{proposition}[proposition]{Proposition}
\aliascntresetthe{proposition}
\crefname{proposition}{Proposition}{Propositions}
\Crefname{proposition}{Proposition}{Propositions}

\newaliascnt{problem}{theorem}

\aliascntresetthe{problem}
\crefname{problem}{Problem}{Problems}
\Crefname{problem}{Problem}{Problems}

\newaliascnt{conjecture}{theorem}

\aliascntresetthe{conjecture}
\crefname{conjecture}{Conjecture}{Conjectures}
\Crefname{conjecture}{Conjecture}{Conjectures}

\theoremstyle{remark}
\newaliascnt{remark}{theorem}
\newtheorem{remark}[remark]{Remark}
\aliascntresetthe{remark}
\crefname{remark}{Remark}{Remarks}
\Crefname{remark}{Remark}{Remarks}

\theoremstyle{definition}
\newaliascnt{definition}{theorem}
\newtheorem{definition}[definition]{Definition}
\aliascntresetthe{definition}
\crefname{definition}{Definition}{Definitions}
\Crefname{definition}{Definition}{Definitions}

\newaliascnt{notation}{theorem}

\aliascntresetthe{notation}
\crefname{notation}{Notation}{Notations}
\Crefname{notation}{Notation}{Notations}

\newaliascnt{example}{theorem}
\newtheorem{example}[example]{Example}
\aliascntresetthe{example}
\crefname{example}{Example}{Examples}
\Crefname{example}{Example}{Examples}

\theoremstyle{plain}

\def\Z{\mathbb{Z}}

\def\KK{\mathbf{k}}

\newcommand{\ZZ}[0]{\ensuremath{\mathbb{Z}}}
\newcommand{\GA}[0]{\ensuremath{\mathbb{G}_{\mathrm{a}}}}
\newcommand{\GM}[0]{\ensuremath{\mathbb{G}_{\mathrm{m}}}}
\newcommand{\AF}[0]{\ensuremath{\mathbb{A}}}

\newcommand{\RR}[0]{\ensuremath{\mathbb{R}}}

\newcommand{\LND}[0]{\ensuremath{\operatorname{LND}}}

\newcommand{\supp}[0]{\ensuremath{\operatorname{supp}}}
\newcommand{\Supp}[0]{\ensuremath{\operatorname{Supp}}}
\newcommand{\Aut}[0]{\ensuremath{\operatorname{Aut}}}

\newcommand{\cone}[0]{\ensuremath{\operatorname{cone}}}
\newcommand{\homo}[0]{\ensuremath{\operatorname{Hom}}}
\newcommand{\Der}[0]{\ensuremath{\operatorname{Der}}}

\newcommand{\Spec}{\operatorname{Spec}}
\newcommand{\GL}{\mathrm{GL}}

\newcommand {\red}[1]{{\color[rgb]{1,0,0} {#1}}}

\newif\ifreview
\reviewfalse

\newif\ifdecisions
\decisionstrue

\ifreview
  \newcommand{\rchange}[1]{{\color{red}#1}}
  \newcommand{\rdel}[1]{{\color{black!45}$\llbracket$\,\textsf{\footnotesize CUT:}\ #1\,$\rrbracket$}}
  \newcommand{\rcomment}[1]{%
    \par\smallskip
    \begingroup
      \footnotesize\color{purple!65!black}%
      \leftskip=2em \rightskip=2em \parindent=0pt
      \textsf{[R.\,D\'iaz]}\ \textit{#1}\par
    \endgroup
    \smallskip}
\else
  \newcommand{\rchange}[1]{#1}
  \newcommand{\rdel}[1]{}
  \newcommand{\rcomment}[1]{}
\fi

\ifdecisions
  \newcommand{\decision}[1]{%
    \par\medskip\noindent
    \fbox{\parbox{0.95\linewidth}{\red{#1}}}%
    \par\medskip}
\else
  \newcommand{\decision}[1]{}
\fi

\begin{document}

\title[Characterization of Stanley-Reisner varieties]{Characterization of Stanley-Reisner varieties by their automorphism group}

\author{Roberto D\'iaz}
\address{Departamento de Matem\'aticas, Facultad de Ciencias, Universidad de La Serena, Juan
Cisternas 1200, La Serena, Chile.}%
\email{roberto.diazv1@userena.cl}

\author{Jos\'e Alejandro Samper}
\address{Departamento de Matem\'aticas, Facultad de Matem\'aticas, Pontificia Universidad Cat\'olica de Chile, Santiago, Chile.}
\email{jsamper@uc.cl}

\date{\today}

\thanks{{\it 2020 Mathematics Subject
   Classification}: 14M25; 13F55; 20M14; 14L99.\\
 \mbox{\hspace{11pt}}{\it Key words}: Stanley-Reisner rings, locally nilpotent derivations, Demazure roots, automorphism groups, ind-groups.\\
 \mbox{\hspace{11pt}}The first author is supported by ANID/Fondecyt Iniciaci\'on No.~11260788.}

\begin{abstract}
\rchange{We study the automorphism ind-group of a Stanley-Reisner variety $X_\Delta$ through a
combinatorial toolkit on the underlying complex: a facet closure operator, its Demazure roots, and
the resulting dichotomy between \emph{exposed} and \emph{hidden} facets. Our main theorem is that a
\emph{fully exposed} complex, that is, one in which every facet has a private vertex, is recovered
from the ind-group: $\Aut(X_\Delta)\cong\Aut(X_{\Delta'})$ forces $\Delta\cong\Delta'$. The
hypothesis cannot be dropped, but it holds after one stabilization, so for arbitrary
$\Delta,\Delta'$ an isomorphism $\Aut(X_\Delta\times\AF^1)\cong\Aut(X_{\Delta'}\times\AF^1)$
already forces $\Delta\cong\Delta'$. At the opposite extreme, a fully hidden $\Delta$ gives
$\Aut(X_\Delta)=T_0\rtimes S(\Delta)$, never isomorphic to the ind-group of a non-rigid
Stanley-Reisner variety. The criterion decides graphs and skeleta, and every complex is homotopy
equivalent to a fully exposed one.}
\end{abstract}

\maketitle

\section*{Introduction}

Characterizing a geometric object by its automorphism group has a long history: in the differentiable category, Filipkiewicz \cite{Fil82} proved that any abstract isomorphism between the diffeomorphism groups of two smooth manifolds is induced by a diffeomorphism. In affine algebraic geometry the analogous problem asks to what extent a variety $X$ is determined by $\Aut(X)$. For the affine space $\AF^n$ it culminates in \cite{Kra17,KRvS21,CRX23}; beyond it, characterizations are known for affine toric varieties \cite{DLR26}, smooth affine spherical varieties \cite{RvS21,RvS25}, and affine surfaces with a torus action \cite{LRU23}, and the automorphism group of a rigid irreducible affine variety was described in \cite[Theorem~1]{AG17}; see also \cite{RUvS25,CERUvS25} for group-theoretical characterizations of rationality. All of these results concern \emph{irreducible} varieties. At the opposite end of the spectrum, the automorphism groups of zero-dimensional monomial algebras were computed in \cite{DLMFR24}, and finite-dimensional monomial algebras were shown in \cite{DLA24} to be determined by the identity component of their automorphism group among finite-dimensional local algebras with cotangent space of fixed dimension.

This paper initiates the analogous study for \emph{Stanley-Reisner varieties}, a combinatorially rich class of typically reducible affine schemes. Given a simplicial complex $\Delta$ on $[n]:=\{1,\dots,n\}$, its Stanley-Reisner ring is
$$
\KK[\Delta] := \KK[x_1,\dots,x_n]/I_\Delta,
\qquad
I_\Delta := \bigl(\mathbf{x}^\sigma \,:\, \sigma\notin\Delta\bigr),
\quad \mathbf{x}^\sigma:=\textstyle\prod_{i\in\sigma}x_i,
$$
and $X_\Delta:=\Spec\KK[\Delta]$ is the corresponding Stanley-Reisner variety: the union of the coordinate subspaces indexed by the facets of $\Delta$. Stanley-Reisner rings are a cornerstone of combinatorial commutative algebra \cite{MS05,HH11}; their modules of derivations were computed by Brumatti--Simis \cite{BS95} (see also \cite{Tad09}), and M\"uller \cite[Theorem~1]{Mul95} studied $\Der(\KK[\Delta])$ as an abstract Lie algebra, proving that complexes in a certain class are determined by it; this is the closest precursor of the present work, and \cref{rem:muller-comparison} compares the two theorems in full. Our guiding analogy is twofold: on the toric side, $\Aut(X)$ of an affine toric variety is governed by Demazure roots \rchange{\cite[\S\,4, n\textsuperscript{o}\,2, D\'efinition~1]{Dem70}}, \cite[Theorem~2.7]{Lie10}, \cite{AR15}, and in the setting of \cite{DLR26} the varieties determined by their automorphism group are the products $\AF^1\times Z$; on the monoid side, an affine monoid algebra remembers its combinatorial genesis \cite[Chapter~5]{BG09}, and the Stanley-Reisner counterpart is a theorem of Bruns--Gubeladze \cite{BG96}: an isomorphism of $\KK$-algebras $\KK[\Delta]\cong\KK[\Delta']$ forces $\Delta\cong\Delta'$. Stanley-Reisner varieties sit between these worlds: each irreducible component is an affine space, but the components are glued along coordinate subspaces, and it is this gluing that the automorphism group must recover. For non-normal toric varieties see \cite{DL24,GPT14}; for automorphisms of polyhedral algebras and arrangements of toric varieties, a graded setting containing Stanley-Reisner rings, see \cite[Theorem~5.2]{BG02}, to which the present paper is complementary in working with the full automorphism ind-group rather than its graded part.
\rcomment{pondr\'ia ``.'' final en ``the present work.'' y eliminar\'ia ``and \cref{rem:muller-comparison} compares the two theorems in full''.}

Throughout, $\KK$ denotes an algebraically closed field of characteristic zero and $I_\Delta$ is assumed to have no linear generators, i.e., every singleton $\{i\}$ is a face of $\Delta$; this costs nothing, since deleting the redundant variables makes any Stanley-Reisner ring isomorphic to one of this form. Finally, $\Delta$ is \emph{fully exposed} if every facet of $\Delta$ is exposed (\cref{def:rigid-facet}), i.e.\ has a private vertex; this is strictly stronger than $X_\Delta$ admitting a nontrivial $\GA$-action, and it holds automatically whenever $\Delta$ has a cone vertex (\cref{lem:cone-roots}). The word \emph{rigid} carries several established and mutually incompatible meanings in this context; a footnote in \cref{subsec:der-roots} records which one we adopt, and how the terminology used here relates to that of the earlier literature.

\begin{maintheorem}[\cref{thm:main}]
Let $\Delta$ and $\Delta'$ be fully exposed simplicial complexes on finite vertex sets. If there is an isomorphism of ind-groups $\Aut(X_\Delta)\cong\Aut(X_{\Delta'})$, then $\Delta\cong\Delta'$ as abstract simplicial complexes, and hence $X_\Delta\cong X_{\Delta'}$ as algebraic varieties.
\end{maintheorem}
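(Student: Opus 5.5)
The plan is to recover the combinatorial data of $\Delta$ from the ind-group in three layers: the maximal torus, the root subgroups, and the facets. Then we conclude with Bruns--Gubeladze.

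\textbf{Step 1: the torus.} Since $\KK[\Delta]$ is $\ZZ^n$-graded, the torus $T=\GM^n$ acts faithfully on $X_\Delta$. The first step is to show that $T$ is a maximal torus of $\Aut(X_\Delta)$ and that all maximal tori are conjugate. For the latter, I would use the standard argument for ind-groups generated by algebraic subgroups: any torus acting on $X_\Delta$ permutes the irreducible components (finitely many coordinate subspaces) and hence, up to a finite-index subtorus, preserves each component and the induced grading. An isomorphism $\varphi\colon\Aut(X_\Delta)\to\Aut(X_{\Delta'})$ of ind-groups maps $T$ to a maximal torus $T'$. This gives $n=\dim T=\dim T'=n'$. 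It also gives an identification of character lattices $M\cong M'$, well defined up to the normalizer action.

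\textbf{Step 2: the roots.} The $\GA$-subgroups normalized by $T$ correspond to homogeneous locally nilpotent derivations. By the description of $\LND$ of $\KK[\Delta]$ via Demazure roots of the facet closure operator (the results of \cref{subsec:der-roots}), these are the root subgroups $U_e$, with weight $e\in M$ of the form $\mathbf{x}^m\partial_i$, subject to the facet conditions. Since $\varphi$ intertwines conjugation by $T$ with conjugation by $T'$, it induces a bijection $R(\Delta)\to R(\Delta')$ of root sets compatible with $M\cong M'$. This is the argument of \cite{Kra17,DLR26}: the weight of $U$ is read off from the $T$-action on $\operatorname{Lie}U$.

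\textbf{Step 3: reading $\Delta$ off the roots.} Full exposedness is what makes the last step work. Each facet $F$ has a private vertex $i$, and the roots $\mathbf{x}^m\partial_i$ with $\supp m\subseteq F\setminus\{i\}$ all exist. So the set of roots with "$\partial$-index" $i$ is a cone-shaped set whose positive part has support exactly $F\setminus\{i\}$. I would show that the coordinate basis of $M$ is recovered from $R(\Delta)$ up to permutation, by taking extremal rays of the cone generated by $R(\Delta)$ together with $\pm$ the weights $-e_i$. The facets are then recovered as the supports $\{i\}\cup\supp m$ for maximal such roots. This also recovers non-private vertices, since every vertex lies in some facet, which is exposed. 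The result is a bijection of vertex sets carrying facets to facets, so $\Delta\cong\Delta'$. The algebraic isomorphism $X_\Delta\cong X_{\Delta'}$ is then immediate, and the converse direction is \cite{BG96}.

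\textbf{Main obstacle.} I expect the hard part to be Step 3's recovery of the coordinate basis. The identification $M\cong M'$ in Step 1 is only known up to an automorphism of $M$, so it need not respect the standard bases a priori. I would try first to characterize the $-e_i$ intrinsically: they are the weights of the root subgroups $\partial_i$ with $m=0$ that exist for a private vertex $i$, namely translations along the private coordinate. For the remaining vertices, I would use minimality in the partial order on roots given by "$U_e$ and $U_{e'}$ commute". If that fails, the fallback is to compare $T$-weight multiplicities on the Lie algebra of $\Aut(X_\Delta)$, imitating \cite{DLR26}.
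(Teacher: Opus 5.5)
\textbf{Verdict: there is a genuine gap.} Your outline (torus, then root subgroups, then facets) matches the paper's, but two steps are not actually carried out, and the mechanism you propose for the harder one rests on a false claim.

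\textbf{Step~3: showing $w$ is a permutation.} The gap sits exactly where you flag the main obstacle: you must show that the lattice isomorphism $w\in\GL_n(\ZZ)$ induced by $\theta|_{T_0}$ is a permutation matrix. Neither of your mechanisms does this.
\begin{itemize}
\item Your intrinsic characterisation of the $-\mathbf{e}_i$ is false. One has $-\mathbf{e}_i\in\mathcal{R}(\Delta)$ if and only if $i$ is a \emph{cone} vertex (\cref{lem:cone-roots}), not a private vertex. Take the fully exposed complex with facets $\{1,2\},\{3,4\}$. Every vertex is private, yet $x_2\mapsto x_2+s$ sends $x_2x_3\in I_\Delta$ to $x_2x_3+sx_3\notin I_\Delta$. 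In fact this complex has no root of the form $-\mathbf{e}_i$ at all.
\item For the same reason, $\overline{\mathbf{x}}^{m}\,\partial/\partial\overline{x}_i$ is a root only when $\supp m\in\mathcal{A}_i(\Delta)$, i.e.\ when every facet containing $\supp m$ contains $i$. It is not a root for every $\supp m\subseteq F\setminus\{i\}$.
\item The extremal-ray idea fails too. In the same example $\pm(\mathbf{e}_1-\mathbf{e}_2)\in\mathcal{R}(\Delta)$, so $\cone(\mathcal{R}(\Delta))$ is not pointed and has no extremal rays.
\item Any correct argument must use the non-squarefree roots: \cref{ex:full-needed} gives a non-permutation $w$ that preserves all squarefree roots.
\item Your commuting-order and Lie-algebra fallbacks are not yet arguments.
\end{itemize}
The missing idea is the set of asymptotic directions $\mathcal{D}(\Delta)$: the primitive $d$ with $\alpha+\ZZ_{\geq0}d\subseteq\mathcal{R}(\Delta)$ for some root $\alpha$. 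Any $w$ with $w(\mathcal{R}(\Delta))=\mathcal{R}(\Delta')$ carries $\mathcal{D}(\Delta)$ onto $\mathcal{D}(\Delta')$. The cone spanned by $\mathcal{D}(\Delta)$ is the simplicial cone $\RR_{\geq0}^{\Supp(\Delta)}$ (\cref{lem:rays}), which forces $w(\mathbf{e}_j)=\mathbf{e}_{\pi(j)}$ on support vertices. For a target-only vertex $j$ the paper proceeds in three moves.
\begin{itemize}
\item Write $w(\mathbf{e}_j)=\mathbf{e}_{\tau(j)}+b_j$.
\item Since a root has a single negative coordinate, $b_j\leq\mathbf{e}_{\pi(S)}$ for every $S\in\mathcal{A}_j(\Delta)$. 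Full exposedness, in the form $\bigcap_{S\in\mathcal{A}_j(\Delta)}S=\emptyset$ (\cref{lem:target-properties}), then yields $b_j\leq0$.
\item Running the same argument for $w^{-1}$ gives $b_j\geq0$, so $b_j=0$.
\end{itemize}
Without something of this kind, Step~3 does not go through.

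\textbf{Step~1: conjugacy of maximal tori.} This is only asserted. Saying that a torus preserves each component and ``the induced grading'' gives neither $\dim D\leq n$ nor conjugacy to $T_0$. (A torus is connected, so no finite-index subtorus is needed anyway.) Conjugacy is essential: without arranging $\theta(T_0)=T_0'$, the weights of the $\theta(U)$ cannot be compared with $\mathcal{R}(\Delta')$. \cref{thm:tori} proves it as follows.
\begin{itemize}
\item Find a $D$-fixed point $y_0$ in the cone locus (Bia\l ynicki-Birula).
\item Use $\dim\mathfrak{m}_{y_0}/\mathfrak{m}_{y_0}^2=n$ and $\bigcap_k\mathfrak{m}_{y_0}^k=0$ (\cref{lem:dim}) to get a faithful action of $D$ on the cotangent space.
\item Lift eigenvectors to elements $y_1,\dots,y_n$ that generate $\KK[\Delta]$.
\item Check that the kernel of $x_i\mapsto y_i$ is a squarefree monomial ideal $I_\Gamma$.
\item Apply \cref{lem:matching} to conclude $\Gamma=\sigma\cdot\Delta$.
\end{itemize}

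\textbf{What is fine.} Your Step~2 and the final recovery of facets as the maximal sets $\{i\}\cup\supp m$ agree with \cref{prop:root-subgroups} and \cref{thm:core}.
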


\rchange{This is the first characterization of \emph{reducible} affine varieties by their full automorphism ind-group; the precedents are M\"uller's Lie-algebra theorem (\cref{rem:muller-comparison}) and, for graded automorphisms of a larger class, \cite[Theorem~5.2]{BG02}.}%
\rcomment{el parrafo dice ``parece ser la primera''... con toda confianza digo que ``esta es la primera''... [y sobre la cl\'ausula que sigue:] eso lo eliminar\'ia, ya se habl\'o de eso y ahora damos el enfoque desde la geometr\'ia, que es primera vez un resultado de este tipo considerando reducible.}
A complex with a cone vertex is automatically fully exposed (\cref{lem:cone-roots}), so every product $\AF^m\times X_\Gamma$ with $m\geq1$, in particular every affine space, falls within its scope. The sole hypothesis is sharp: \cref{prop:same-aut} exhibits two non-isomorphic Stanley-Reisner varieties whose automorphism ind-groups are isomorphic.

Sharp, but satisfied after a single stabilization: $X_\Delta\times\AF^1\cong X_{\Delta\ast v}$ is a cone, hence fully exposed, and coning is injective on isomorphism classes of complexes (\cref{lem:cone-cancel}). So for $\Delta,\Delta'$ subject to no hypothesis at all, an isomorphism of ind-groups $\Aut(X_\Delta\times\AF^1)\cong\Aut(X_{\Delta'}\times\AF^1)$ already forces $\Delta\cong\Delta'$ and $X_\Delta\cong X_{\Delta'}$ (\cref{cor:stabilized}). Cancellation itself holds in this class for a cheaper reason, with no automorphism groups involved: $X_\Delta\times\AF^1\cong X_{\Delta'}\times\AF^1$ gives $\KK[\Delta][x]\cong\KK[\Delta'][x']$, hence $\Delta\ast v\cong\Delta'\ast v'$ by \cite{BG96} and $\Delta\cong\Delta'$ (\cref{cor:sr-cancellation}). \cref{cor:stabilized} is strictly stronger than that: an isomorphism of the varieties $X_\Delta\times\AF^1\cong X_{\Delta'}\times\AF^1$ implies its hypothesis, and is a far more restrictive assumption.

\cref{sec:roots} is self-contained and can be read on its own. \rchange{It is built on a closure operator:} for a face $S$ of $\Delta$, let $\operatorname{cl}(S)$ be the intersection of the facets containing $S$. Each pair $(S,i)$ with $S\in\Delta$ and $i\in\operatorname{cl}(S)\setminus S$ contributes not one vector but a whole family: the \emph{roots} of $\Delta$ are the $\beta-\mathbf{e}_i\in\ZZ^n$ with $\beta\in\ZZ^n_{\geq0}$, $\beta_i=0$ and $i\in\operatorname{cl}(\supp\beta)\setminus\supp\beta$ (\cref{def:roots-comb}), one for each exponent vector $\beta$ with the given support, so that $\mathcal{R}(\Delta)\subseteq\ZZ^n$ is infinite as soon as it is nonempty. The non-squarefree roots are not redundant: passing to the squarefree part of $\mathcal{R}(\Delta)$ already breaks the reconstruction (\cref{ex:full-needed}). These vectors are exactly the Demazure roots of $X_\Delta$, that is, the degrees of the homogeneous locally nilpotent derivations of $\KK[\Delta]$ (\cref{thm:roots-agree}); so the $\GA$-actions on $X_\Delta$ \rchange{detect when $\operatorname{cl}$ is not the identity.} A facet with at least two vertices is \emph{exposed} when one of its vertices is \emph{private}\rchange{, i.e.\ its removal leaves a face lying in no other facet (equivalently, the facet admits an elementary collapse, \cref{rem:morse}),} and \emph{hidden} otherwise (\cref{def:rigid-facet,lem:private-vertex}); $\Delta$ is \emph{fully exposed} if every facet is exposed and \emph{fully hidden} if none is. In the standard families the dichotomy is decided by familiar invariants: a graph is fully exposed exactly when it is a disjoint union of stars without isolated vertices, and fully hidden exactly when it has no vertex of degree one; proper skeleta of simplices are fully hidden (\cref{prop:graph-exposed}). Fully exposed complexes are not generic\rchange{ (among graphs they are a vanishing fraction), but neither are they rare:} every cone is one, and every complex is homotopy equivalent to one, obtained by whiskering (\cref{prop:whisker-homotopy}).

\rchange{In this language, the following purely combinatorial result is central to the proof of the Main Theorem.}%
\rcomment{tenemos dos ``main theorem'' por lo que no queda claro, siento yo. Por lo que pas\'e el segundo a solo ``theorem''.}

\begin{theorem*}[\cref{thm:core} and \cref{cor:core-permuted}]
Let $\Delta$ and $\Delta'$ be fully exposed simplicial complexes on $[n]$. If $\sigma\cdot\mathcal{R}(\Delta)=\mathcal{R}(\Delta')$ for some permutation $\sigma\in S_n$ acting on $\ZZ^n$ by permuting coordinates, then $\sigma\cdot\Delta=\Delta'$; in particular $\Delta\cong\Delta'$.
\end{theorem*}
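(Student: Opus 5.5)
The plan is to show that a fully exposed complex on $[n]$ can be read off directly from its set of roots. The statement then follows by equivariance. First I reduce to the case $\sigma=\mathrm{id}$. By \cref{def:roots-comb}, the closure operator commutes with relabelling vertices, $\operatorname{cl}_{\sigma\cdot\Delta}(\sigma(S))=\sigma(\operatorname{cl}_\Delta(S))$, and supports transform as $\supp(\sigma\cdot\beta)=\sigma(\supp\beta)$. Hence $\mathcal{R}(\sigma\cdot\Delta)=\sigma\cdot\mathcal{R}(\Delta)$. Being fully exposed is also invariant under relabelling. So the hypothesis becomes $\mathcal{R}(\sigma\cdot\Delta)=\mathcal{R}(\Delta')$ with both complexes fully exposed, and it suffices to prove that $\mathcal{R}(\Delta)=\mathcal{R}(\Delta')$ forces $\Delta=\Delta'$.

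Next I decode a root. Every $r\in\mathcal{R}(\Delta)$ has exactly one negative coordinate, equal to $-1$, at position $i$, and its positive part is $\beta$. Thus the pair $(S,i)=(\supp\beta,i)$ is determined by $r$ alone, with no reference to $\Delta$. Define
$$
\mathcal{F}(\mathcal{R}):=\bigl\{\supp\beta\cup\{i\} \,:\, \beta-\mathbf{e}_i\in\mathcal{R}\bigr\}.
$$
I claim that
$$
\Delta=\bigl\{\tau \,:\, \tau\subseteq G \text{ for some } G\in\mathcal{F}(\mathcal{R}(\Delta))\bigr\}\cup\bigl\{\{j\}\,:\,j\in[n]\bigr\}\cup\{\emptyset\}.
$$
Both sides are then determined by $\mathcal{R}(\Delta)$, which gives $\Delta=\Delta'$.

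For the inclusion $\supseteq$, take a root with data $(S,i)$. Then $S\in\Delta$ is a face, so the set of facets containing $S$ is nonempty. Its intersection $\operatorname{cl}(S)$ is therefore contained in a facet, hence is a face. This holds also for $S=\emptyset$, where $\operatorname{cl}(\emptyset)$ is the set of cone vertices. Since $S\cup\{i\}\subseteq\operatorname{cl}(S)$, the set $S\cup\{i\}$ is a face. Singletons are faces by the standing convention that $I_\Delta$ has no linear generators.

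For the inclusion $\subseteq$, it suffices that every facet $F$ with $|F|\geq2$ lies in $\mathcal{F}(\mathcal{R}(\Delta))$. Full exposedness gives a private vertex $p\in F$ (\cref{def:rigid-facet,lem:private-vertex}), so $S:=F\setminus\{p\}$ lies in no facet other than $F$. Then $\operatorname{cl}(S)=F$, and $p\in\operatorname{cl}(S)\setminus S$. Taking $\beta=\mathbf{1}_S$ (or any $\beta$ with support $S$), we get $\beta-\mathbf{e}_p\in\mathcal{R}(\Delta)$, and $\supp\beta\cup\{p\}=F$. The remaining facets are singletons (isolated vertices) and are covered by the explicit term $\{\{j\}\}$.

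The step needing the most care is this last bookkeeping. I must check that the paper's notion of "fully exposed" imposes nothing on singleton facets, or handles them exactly as above. I also need to confirm that $\operatorname{cl}(S)$ is a face for every face $S$, including $S=\emptyset$. Both checks are direct from the definitions. The substantive input is that a private vertex $p$ yields $\operatorname{cl}(F\setminus\{p\})=F$.
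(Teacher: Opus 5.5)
Your proof is correct and follows the paper's argument. You reduce to $\sigma=\mathrm{id}$ by equivariance, exactly as in \cref{cor:core-permuted}, and then recover the facets from the sets $\supp\beta\cup\{i\}$, using that each such set lies in $\operatorname{cl}(\supp\beta)$ and that a private vertex $p$ of $F$ gives $\operatorname{cl}(F\setminus\{p\})=F$. The only difference is cosmetic: the paper splits off the cone-vertex case via \cref{lem:cone-roots} and takes the maximal elements of $\{S\cup\{i\}:\emptyset\neq S\in\mathcal{A}_i(\Delta)\}$, whereas your down-closure with singletons adjoined covers all cases, including $n=1$, without that split.
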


So on \rchange{fully exposed} complexes the root set is a complete
invariant; reconstruction breaks down as soon as one steps outside that class, already for the
pair of complexes behind the sharpness statement above (\cref{ex:rigid-invisible}). \rchange{The
geometric and combinatorial viewpoints are related as follows.}%
\rcomment{\textquestiondown por qu\'e no decir simplemente ``fully exposed'' aqu\'i?}

\medskip
\begin{center}
\begin{tabular}{@{}p{0.47\textwidth}p{0.47\textwidth}@{}}
\toprule
\textbf{Geometry of $X_\Delta$} & \textbf{Combinatorics of $\Delta$}\\
\midrule
Irreducible component $V_F\cong\AF^{|F|}$ & Facet, i.e.\ maximal face, $F\in\mathcal{F}(\Delta)$ (\cref{subsec:sr})\\[6pt]
Direct factor: $X_\Delta\cong Z\times\AF^1$ & Cone vertex: $\Delta=\Gamma\ast v$ \eqref{eq:cone-ring}, \cite{BG96}\\[6pt]
Maximal-dimensional torus in $\Aut(X_\Delta)$, unique up to conjugacy & The $n$ vertices; character lattice $\ZZ^{[n]}$ (\cref{thm:tori})\\[6pt]
Root subgroup $U_\alpha\cong\GA$ & Root $\alpha\in\mathcal{R}(\Delta)$ (\cref{prop:root-subgroups})\\[6pt]
Demazure root $\beta-\mathbf{e}_i$ of $X_\Delta$ & $\supp\beta\in\Delta$, $i\in\operatorname{cl}(\supp\beta)\setminus\supp\beta$ (\cref{thm:roots-agree})\\[6pt]
$X_\Delta$ is rigid (no $\GA$-action) & $\Delta$ is fully hidden: $\operatorname{cl}=\mathrm{id}$ (\cref{cor:hidden-rigid})\\[6pt]
$\Aut(X_\Delta)\cong T_0\rtimes S(\Delta)$, torus by symmetries & $\Delta$ is fully hidden (\cref{thm:rigid-aut,rem:rigid-vs-nonrigid})\\[6pt]
$\Aut(X_\Delta)$ determines $X_\Delta$ in the fully exposed class & $\Delta$ is fully exposed (\cref{thm:main}); one direction only\\[6pt]
$\Aut(X_\Delta\times\AF^1)$ determines $X_\Delta$ & No hypothesis on $\Delta$ (\cref{cor:stabilized})\\
\bottomrule
\end{tabular}
\end{center}
\medskip

\noindent The rows are equivalences down to the seventh. The eighth is an implication: we
prove that $\Aut(X_\Delta)$ determines $X_\Delta$ inside the fully exposed class, not that the
fully exposed complexes are the only ones so determined; and the last row is not a
correspondence but the statement that one stabilization removes the hypothesis altogether. The $T_0\rtimes S(\Delta)$ row does have
its converse (\cref{rem:rigid-vs-nonrigid}): a $\Delta$ that is not fully hidden has
$\mathcal{R}(\Delta)\neq\emptyset$, hence a root subgroup $\cong\GA$ inside $\Aut(X_\Delta)$, which
$T_0\rtimes S(\Delta)$ does not contain.
\medskip

The paper is organized as follows. After the background of \cref{sec:prelim}, \cref{sec:roots} is purely combinatorial: the closure
operator and $\mathcal{R}(\Delta)$, the exposed/hidden dichotomy, the reconstruction theorem, and
what the dichotomy amounts to for graphs, skeleta and whiskered complexes. \cref{sec:aut} crosses
to geometry: \cref{subsec:der-roots} identifies the degrees of the homogeneous locally nilpotent
derivations of $\KK[\Delta]$ with the combinatorial roots, and \cref{subsec:autgroup} builds the
ind-group structure of $\Aut(X_\Delta)$. \cref{sec:main} assembles these into the Main Theorem, compares it with
M\"uller's theorem (\cref{rem:muller-comparison}), and proves the stabilized statement
(\cref{cor:stabilized}).\rdel{ A companion note, available from the authors, collects three lines of
continuation: a conjectural description of $\Aut(X_\Delta)$ in terms of
$(\mathcal{R}(\Delta),S(\Delta))$, the interaction with whiskering, and the fibers of
$\Delta\mapsto\mathcal{R}(\Delta)$.}%

\subsection*{Acknowledgments}

We thank Andriy Regeta for his valuable guidance. This collaboration began at the AGREGA workshop held at Universidad de La Serena in January 2025, where the first author presented the problem; we thank the institution for its support and hospitality. The final stages of this work were carried out while the second author was visiting the \'Ecole de technologie sup\'erieure in Montreal; the second author thanks the institution for its hospitality.

The authors acknowledge the use of Claude (Anthropic) during the development of this work. This tool was used in the exploratory phase of the research, including discussions of formulations and the construction of examples. All mathematical arguments, proofs, and final verifications were carried out independently by the authors, who assume full responsibility for the contents of this paper.

\section{Preliminary}\label{sec:prelim}
\subsection{Simplicial complexes and Stanley-Reisner varieties}\label{subsec:sr}
Throughout, $\KK$ is an algebraically closed field of characteristic zero. A \emph{simplicial complex} $\Delta$ on $[n]:=\{1,\dots,n\}$ is a collection of subsets of $[n]$, called \emph{faces}, closed under taking subsets. The inclusion-maximal faces are the \emph{facets}, and $\mathcal{F}(\Delta)$ denotes the set of facets. For an arbitrary subset $\sigma\subseteq[n]$, face or not, we write $\mathcal{F}_\sigma(\Delta):=\{F\in\mathcal{F}(\Delta): \sigma\subseteq F\}$, abbreviating $\mathcal{F}_{\{i\}}(\Delta)=\mathcal{F}_i(\Delta)$; thus $\mathcal{F}_\emptyset(\Delta)=\mathcal{F}(\Delta)$, and $\mathcal{F}_\sigma(\Delta)=\emptyset$ exactly when $\sigma\notin\Delta$. Also, if $\KK[\mathbf{x}]:=\KK[x_1,\dots,x_n]$ and $\sigma\subseteq[n]$, we set $\mathbf{x}^{\sigma}=\prod_{i\in \sigma}x_i\in\KK[\mathbf{x}]$. We assume throughout that $\{i\}\in\Delta$ for every $i\in[n]$; equivalently, the Stanley-Reisner ideal
$$
I_\Delta := \bigl(\mathbf{x}^\sigma : \sigma\subseteq[n],\ \sigma\notin\Delta\bigr)\subseteq \KK[\mathbf{x}]
$$
has no linear generators, where $\operatorname{G}(I_\Delta)$ denotes its unique minimal monomial generating set (see \cite[Proposition~1.1.6]{HH11}); the elements of $\operatorname{G}(I_\Delta)$ are the monomials $\mathbf{x}^\sigma$ with $\sigma$ a minimal non-face. The \emph{Stanley-Reisner ring} and \emph{variety} are $\KK[\Delta]:=\KK[\mathbf{x}]/I_\Delta$ and $X_\Delta:=\Spec\KK[\Delta]$. The irredundant primary decomposition (see \cite[Lemma~1.5.4]{HH11})
$$
I_\Delta=\bigcap_{F\in\mathcal{F}(\Delta)}\mathfrak{p}_{F},\qquad \mathfrak{p}_{F}:=(x_l : l\notin F),
$$
shows that the irreducible components of $X_\Delta$ are the coordinate subspaces $V_F:=\{x_l=0 \text{ for } l\notin F\}\cong\AF^{|F|}$, $F\in\mathcal{F}(\Delta)$. We write $\mathfrak{q}_F:=\mathfrak{p}_F/I_\Delta\subseteq\KK[\Delta]$ for the corresponding minimal primes and $\KK[F]:=\KK[\Delta]/\mathfrak{q}_F\cong\KK[x_j: j\in F]$ for the coordinate rings of the components. More generally, for any nonempty family $\mathcal{S}\subseteq\mathcal{F}(\Delta)$ with $H:=\bigcap_{F\in\mathcal{S}}F$ one has
\begin{equation}\label{eq:intersections}
\sum_{F\in\mathcal{S}}\mathfrak{q}_F=(\overline{x}_l : l\notin H)
\qquad\text{and}\qquad
\KK[\Delta]\Big/\sum_{F\in\mathcal{S}}\mathfrak{q}_F\;\cong\;\KK[x_j : j\in H],
\end{equation}
a polynomial ring of dimension $|H|$: indeed, the kernel of $\KK[\mathbf{x}]\twoheadrightarrow \KK[\Delta]/(\overline{x}_l:l\notin H)$ equals $I_\Delta+(x_l:l\notin H)$, and every minimal generator $\mathbf{x}^\sigma$ of $I_\Delta$ satisfies $\sigma\not\subseteq H$ (as $H$ is a face), hence lies in $(x_l:l\notin H)$.
For $a\in\ZZ^n$ we write $\supp(a):=\{i\in[n]: a_i\neq0\}$ and $|a|:=\sum_{i}a_i$. The monomials $\overline{\mathbf{x}}^{a}$ with $a\in\ZZ^n_{\geq0}$ and $\supp(a)\in\Delta$ form a $\KK$-basis of $\KK[\Delta]$ (see \cite[Proposition~1.5.1]{HH11}). For $S\subseteq[n]$ we write $\mathbf{e}_S:=\sum_{l\in S}\mathbf{e}_l\in\ZZ^n$; in particular $\mathbf{e}_{\{i\}}=\mathbf{e}_i$ is the $i$-th standard basis vector. In particular $\KK[\Delta]$ is $\ZZ^n$-graded, $\KK[\Delta]=\bigoplus_{a\in\ZZ^n}\overline{B}_a$ with $\overline{B}_a=\KK\cdot\overline{\mathbf{x}}^a$ when $a\in\ZZ^n_{\geq0}$ and $\supp(a)\in\Delta$ and $\overline{B}_a=0$ otherwise, so that all graded pieces have dimension at most one; coarsening by total degree gives the standard $\ZZ$-grading $\KK[\Delta]=\bigoplus_{e\geq0}A_e$ with $A_e:=\bigoplus_{|a|=e}\overline{B}_a$. Moreover the product of two monomials is $\overline{\mathbf{x}}^{a}\,\overline{\mathbf{x}}^{b}=\overline{\mathbf{x}}^{a+b}$, which vanishes if and only if $\supp(a+b)=\supp(a)\cup\supp(b)\notin\Delta$.
A vertex $i\in[n]$ is a \emph{cone vertex} of $\Delta$ if $i\in F$ for every facet $F$; we write $c(\Delta)\subseteq[n]$ for the set of cone vertices of $\Delta$. For a new vertex $v\notin[n]$, the \emph{cone} $\Delta\ast v:=\Delta\cup\{S\cup\{v\}: S\in\Delta\}$ is a simplicial complex on $[n]\cup\{v\}$ with the same minimal non-faces as $\Delta$, since a subset of $[n]\cup\{v\}$ is a non-face of $\Delta\ast v$ exactly when its intersection with $[n]$ is a non-face of $\Delta$; hence
\begin{equation}\label{eq:cone-ring}
I_{\Delta\ast v}=I_\Delta\cdot\KK[\mathbf{x},x_v],
\qquad
\KK[\Delta\ast v]=\KK[\Delta][x_v],
\qquad
X_{\Delta\ast v}\cong X_\Delta\times\AF^1,
\end{equation}
and $I_{\Delta\ast v}$ again has no linear generators, so $\Delta\ast v$ satisfies our standing convention. We write $S(\Delta)\subseteq S_n$ for the group of permutations $\sigma$ of $[n]$ with $\sigma\cdot\Delta:=\{\sigma(A):A\in\Delta\}=\Delta$; each $\sigma\in S(\Delta)$ induces the automorphism $\rho_\sigma$ of $\KK[\Delta]$, $\overline{x}_i\mapsto\overline{x}_{\sigma(i)}$.

\subsection{Derivations}\label{subsec:der}

A \emph{derivation} of a $\KK$-algebra $A$ is a $\KK$-linear map $\partial\colon A\to A$ satisfying the Leibniz rule $\partial(fg)=\partial(f)g+f\partial(g)$ for all $f,g\in A$. A derivation $\partial$ is \emph{locally nilpotent} if for every $f\in A$ there exists $k\geq1$ with $\partial^k(f)=0$; we write $\LND(A)$ for the set of locally nilpotent derivations. A derivation $\partial$ of $\KK[\Delta]$ is \emph{$\ZZ^n$-homogeneous of degree $\alpha\in\ZZ^n$} if $\partial(\overline{B}_a)\subseteq\overline{B}_{a+\alpha}$ for every $a\in\ZZ^n$.

Every derivation $\partial$ of $\KK[\Delta]$ decomposes uniquely as a finite sum $\partial=\sum_{\alpha\in\ZZ^n}\partial_\alpha$ of $\ZZ^n$-homogeneous derivations, where $\partial_\alpha(f):=(\partial f)_{a+\alpha}$ for $f$ homogeneous of degree $a$; the sum is finite because $\partial_\alpha$ vanishes on the generators $\overline{x}_1,\dots,\overline{x}_n$ for all but finitely many $\alpha$ \cite[Proposition~2.1]{DL24}. In particular, to verify whether $\KK[\Delta]$ admits a nonzero locally nilpotent derivation it suffices to look among the $\ZZ^n$-homogeneous ones.

\begin{lemma}\label{lem:decomposition}
Let $\partial\neq0$ be a locally nilpotent derivation of $\KK[\Delta]$, with homogeneous decomposition $\partial=\sum_\alpha\partial_\alpha$. Then at least one nonzero component $\partial_\alpha$ is locally nilpotent.
\end{lemma}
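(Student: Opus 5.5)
The plan is to reduce to the classical fact that the extreme component of a locally nilpotent derivation with respect to a $\ZZ$-grading is again locally nilpotent. To apply it, I will coarsen the $\ZZ^n$-grading by a generic linear form, chosen so that the extreme $\ZZ$-component is a single $\partial_\alpha$.

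\emph{Step 1 (choice of a generic grading).} Let $A:=\{\alpha\in\ZZ^n:\partial_\alpha\neq0\}$. This set is finite and nonempty by the decomposition recalled before the lemma. Choose a linear form $\lambda\colon\ZZ^n\to\ZZ$, say $\lambda(a)=\sum_i c_ia_i$ with integers $c_i$, that is injective on $A$. This is possible because only finitely many differences $\alpha-\alpha'$ must avoid the hyperplane $\ker\lambda$. Grade $\KK[\Delta]$ by $\lambda$, so that $\KK[\Delta]=\bigoplus_{d\in\ZZ}C_d$ with $C_d:=\bigoplus_{\lambda(a)=d}\overline{B}_a$. For $m:=\max\lambda(A)$ there is a unique $\alpha_0\in A$ with $\lambda(\alpha_0)=m$. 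Each $\partial_\alpha$ maps $C_d$ into $C_{d+\lambda(\alpha)}$.

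\emph{Step 2 (top component of powers).} Fix $f\in\overline{B}_a$, so that $f\in C_{\lambda(a)}$. Expanding $\partial^k f=\sum_{\alpha_1,\dots,\alpha_k\in A}\partial_{\alpha_k}\cdots\partial_{\alpha_1}f$, each term lies in $C_{\lambda(a)+\sum_j\lambda(\alpha_j)}$. The largest possible value of $\sum_j\lambda(\alpha_j)$ is $km$, and it is attained only when every $\alpha_j=\alpha_0$, since $\lambda$ is injective on $A$ and $m$ is its strict maximum there. Hence the $C_{\lambda(a)+km}$-component of $\partial^kf$ is exactly $\partial_{\alpha_0}^k f$. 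Since $\partial$ is locally nilpotent, $\partial^kf=0$ for $k\gg0$, and therefore $\partial_{\alpha_0}^kf=0$. Thus $\partial_{\alpha_0}$ is nilpotent on each homogeneous element, in particular on the generators $\overline{x}_1,\dots,\overline{x}_n$.

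\emph{Step 3 (local nilpotence on all of $\KK[\Delta]$).} A derivation that is nilpotent on a set of algebra generators is locally nilpotent. By the Leibniz formula $\delta^N(gh)=\sum_{j}\binom{N}{j}\delta^j(g)\delta^{N-j}(h)$, the set of elements on which $\delta$ is nilpotent is closed under products. It is clearly a $\KK$-subspace. Hence it is a subalgebra containing the $\overline{x}_i$, so it equals $\KK[\Delta]$. Therefore $\partial_{\alpha_0}$ is a nonzero locally nilpotent component, and by symmetry so is the component minimizing $\lambda$.

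The only delicate point is Step 2: the genericity of $\lambda$ is exactly what guarantees that the top $\lambda$-degree piece of $\partial^kf$ involves no cross terms among different $\partial_\alpha$. The rest of the argument is routine.
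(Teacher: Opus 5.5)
Your proof is correct and is essentially the paper's argument: a linear form injective on the finite set $A$ singles out a unique maximizing degree $\alpha_0$, and for homogeneous $f$ the top-degree piece of $\partial^k f$ is exactly $\partial_{\alpha_0}^k f$, which must vanish for $k\gg0$. The only difference is organizational. The paper argues by contradiction and first passes to a homogeneous component of a bad element, whereas you prove nilpotence directly on homogeneous elements and then extend to all of $\KK[\Delta]$; your Step~3 could be shortened, since every element is a finite sum of homogeneous ones.
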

\begin{proof}
Let $A=\{\alpha\in\ZZ^n:\partial_\alpha\neq0\}$, which is finite. Since $A$ is finite, there exists a linear form $\varphi:\ZZ^n\to\ZZ$ that is injective on $A$: indeed, it suffices to choose $\varphi$ so that it does not vanish on any nonzero difference $\alpha-\beta$ with $\alpha,\beta\in A$, $\alpha\neq\beta$. Let $\tau\in A$ be the unique element maximizing $\varphi$ on $A$.

We claim that $\partial_\tau$ is locally nilpotent. Suppose otherwise. Then there exists $f$ such that $\partial_\tau^k(f)\neq0$ for every $k\ge0$. Writing $f=\sum_a f_a$ into homogeneous components and noting that $\partial_\tau^k(f_a)$ has degree $a+k\tau$, these terms lie in pairwise distinct homogeneous components. Hence there exists a homogeneous component $g=f_a$ such that $\partial_\tau^k(g)\neq0$ for infinitely many $k$, and therefore for all $k\ge0$.

Expanding $\partial^k=\sum\partial_{\alpha_1}\cdots\partial_{\alpha_k}$ over words $(\alpha_1,\dots,\alpha_k)\in A^k$, a word contributes to the homogeneous component of degree $a+\alpha_1+\cdots+\alpha_k$. Since $\varphi(\alpha)\le\varphi(\tau)$ for every $\alpha\in A$, with equality only when $\alpha=\tau$, we have $\varphi(\alpha_1+\cdots+\alpha_k)<k\varphi(\tau)$ unless $\alpha_1=\cdots=\alpha_k=\tau$. Hence $(\partial^k(g))_{a+k\tau}=\partial_\tau^k(g)\neq0$ for every $k$, contradicting the local nilpotency of $\partial$.
\end{proof}

\subsection{Ind-varieties and ind-groups}\label{subsec:ind}
An \emph{ind-variety} is a set $\mathcal{V}$ with an ascending filtration
$\mathcal{V}_0\subseteq\mathcal{V}_1\subseteq\cdots$ with union $\mathcal{V}$, in which each
$\mathcal{V}_k$ is an algebraic variety (not necessarily irreducible) and each inclusion
$\mathcal{V}_k\hookrightarrow\mathcal{V}_{k+1}$ is a closed immersion; a \emph{morphism} of
ind-varieties $\varphi\colon\mathcal{V}\to\mathcal{W}$ is a map such that for every $k$ there is
an $\ell$ with $\varphi(\mathcal{V}_k)\subseteq\mathcal{W}_\ell$ and
$\varphi|_{\mathcal{V}_k}$ a morphism of varieties \cite[Definition~1.1.1]{FK18}. An
\emph{ind-group} is a group $\mathcal{G}$ with an ind-variety structure for which
multiplication and inversion are morphisms of ind-varieties \cite[Section~2.1]{FK18}, and an
\emph{algebraic subgroup} of $\mathcal{G}=\bigcup_k\mathcal{G}_k$ is a subgroup that is a closed
algebraic subvariety of some $\mathcal{G}_k$ (see \cite[Section~2.4]{FK18}). For an algebraic
group $T$ we write $X(T):=\homo(T,\GM)$ for its character group, and by a \emph{torus} in
$\mathcal{G}$ we mean an algebraic subgroup isomorphic to some $(\KK^*)^m$. An
\emph{isomorphism of ind-groups} is a group isomorphism that is an isomorphism of
ind-varieties.

For any finitely generated associative $\KK$-algebra $R$ the group $\Aut_\KK(R)$ carries a natural structure of affine ind-group \cite[Proposition~5.4.1]{FK18}; in particular so does $\Aut(X_\Delta)=\Aut_\KK(\KK[\Delta])$. Write $A:=\KK[\Delta]$ and put $A_{\leq d}:=\bigoplus_{e\leq d}A_e$, a finite-dimensional vector space. A $\KK$-algebra endomorphism of $A$ is determined by the images of the generators, and a tuple
$(f_1,\dots,f_n)$ arises this way precisely when $\prod_{i\in\sigma}f_i=0$ for every
$\mathbf{x}^\sigma\in\operatorname{G}(I_\Delta)$, which is a closed condition. This realises
$\operatorname{End}_\KK(A)$ as a closed subset of $\bigcup_d (A_{\leq d})^n$, and bounding the
degrees of the images of the generators stratifies it by the affine varieties
$\operatorname{End}_\KK(A)_{\leq d}$, whose inclusions are closed immersions. Then
$$
\Aut(X_\Delta)\;=\;\bigl\{(\varphi,\psi)\in\operatorname{End}_\KK(A)^2 \,:\, \varphi\circ\psi=\psi\circ\varphi=\operatorname{id}\bigr\}
$$
is closed in $\operatorname{End}_\KK(A)^2$, with $d$-th piece the pairs $(\varphi,\varphi^{-1})$
such that $\varphi$ and $\varphi^{-1}$ both send every $\overline{x}_l$ to an element of degree
at most $d$. Every algebraic subgroup $G\subseteq\Aut(X_\Delta)$ acts algebraically on $X_\Delta$: the action morphism $G\times X_\Delta\to X_\Delta$ is obtained by restricting the tautological action of the filtration piece containing $G$.

A \emph{root subgroup} of an ind-group $\mathcal{G}$ with respect to a torus
$T\subseteq\mathcal{G}$ is an algebraic subgroup $U\subseteq\mathcal{G}$ isomorphic to $\GA$ for
which there are an isomorphism $\varepsilon\colon\GA\to U$ and a nontrivial character
$\alpha\in X(T)$ with $t\,\varepsilon(s)\,t^{-1}=\varepsilon(\alpha(t)\cdot s)$ for all $t\in T$
and $s\in\GA$ (so $U$ is normalized by $T$; cf.~\cite[Definition~6.1]{Kra17}, where $\alpha$ may
be trivial). The character $\alpha$, independent of the choice of $\varepsilon$, is the
\emph{weight} of $U$.

\section{Demazure roots of a simplicial complex}\label{sec:roots}

This section is purely combinatorial. We attach to $\Delta$ a closure operator on its face
poset, define the Demazure roots $\mathcal{R}(\Delta)\subseteq\ZZ^n$ in terms of it, and show
that a complex all of whose facets are \emph{exposed} is determined by $\mathcal{R}(\Delta)$
(\cref{thm:core}). The name is justified in \cref{thm:roots-agree}; nothing below depends on
that.

\subsection{The facet closure operator}\label{subsec:closure}

For $S\in\Delta$ put $\operatorname{cl}_\Delta(S):=\bigcap_{F\in\mathcal{F}_S(\Delta)}F$, the
intersection being over a nonempty family. This is the closure operator of the
intersection-closed system generated by $\mathcal{F}(\Delta)$: it is extensive, monotone and
idempotent, its fixed points are the faces that are intersections of facets, and
$\operatorname{cl}(\emptyset)=c(\Delta)$, the set of cone vertices. We drop the subscript when
$\Delta$ is clear.

\begin{definition}\label{def:Ai}
For $i\in[n]$ set $\mathcal{A}_i(\Delta):=\{S\subseteq[n]\setminus\{i\}: S\in\Delta,\
i\in\operatorname{cl}(S)\}$.
\end{definition}

\begin{definition}\label{def:roots-comb}
A \emph{Demazure root} of $\Delta$ is a vector $\beta-\mathbf{e}_i\in\ZZ^n$ with
$\beta\in\ZZ^n_{\geq0}$, $\beta_i=0$ and $\supp(\beta)\in\mathcal{A}_i(\Delta)$; that is,
\begin{equation}\label{eq:R-closure}
\mathcal{R}(\Delta)=\bigl\{\beta-\mathbf{e}_i \;:\; \beta\in\ZZ^n_{\geq0},\ \supp(\beta)\in\Delta,\ i\in\operatorname{cl}(\supp\beta)\setminus\supp\beta\bigr\},
\end{equation}
with $\supp(0)=\emptyset$, so $\beta=0$ contributes the $-\mathbf{e}_i$ with $i\in c(\Delta)$.
We write $\mathcal{R}_i(\Delta):=\{\alpha\in\mathcal{R}(\Delta):\alpha_i=-1\}$.
\end{definition}

The equivalent facet-theoretic form below is what \cref{subsec:der-roots} consumes.

\begin{lemma}\label{lem:closure-Ai}
Let $i\in[n]$ and $S\subseteq[n]\setminus\{i\}$. Then $S\in\mathcal{A}_i(\Delta)$ if and only if
$\emptyset\neq\mathcal{F}_S(\Delta)\subseteq\mathcal{F}_i(\Delta)$; in particular
$\emptyset\in\mathcal{A}_i(\Delta)$ if and only if $i$ is a cone vertex.
\end{lemma}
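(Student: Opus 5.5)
The argument is a direct unwinding of the definitions of $\mathcal{A}_i(\Delta)$ and $\operatorname{cl}$, and I would handle the two directions separately. Fix $i\in[n]$ and $S\subseteq[n]\setminus\{i\}$.

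For the forward direction, suppose $S\in\mathcal{A}_i(\Delta)$. Then $S\in\Delta$, and by the remark in \cref{subsec:sr} this is equivalent to $\mathcal{F}_S(\Delta)\neq\emptyset$. Moreover $i\in\operatorname{cl}(S)=\bigcap_{F\in\mathcal{F}_S(\Delta)}F$. Hence every facet $F$ containing $S$ also contains $i$, which says exactly that $\mathcal{F}_S(\Delta)\subseteq\mathcal{F}_i(\Delta)$.

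For the converse, suppose $\emptyset\neq\mathcal{F}_S(\Delta)\subseteq\mathcal{F}_i(\Delta)$. Nonemptiness gives $S\in\Delta$, so $\operatorname{cl}(S)$ is defined as an intersection over a nonempty family. The inclusion says every facet containing $S$ contains $i$, so $i$ lies in that intersection. Together with $i\notin S$ by hypothesis, this gives $S\in\mathcal{A}_i(\Delta)$.

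For the special case, take $S=\emptyset$. Then $\mathcal{F}_\emptyset(\Delta)=\mathcal{F}(\Delta)$, which is nonempty because $\{i\}\in\Delta$. So the criterion reads $\mathcal{F}(\Delta)\subseteq\mathcal{F}_i(\Delta)$, that is, $i$ lies in every facet. By definition this means $i$ is a cone vertex, consistent with $\operatorname{cl}(\emptyset)=c(\Delta)$.

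There is no real obstacle here. The only point needing care is the nonemptiness condition: it encodes $S\in\Delta$ and prevents the empty intersection from being read as all of $[n]$.
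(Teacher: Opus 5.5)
Your proposal is correct and follows the paper's proof: both unwind the definition of $\mathcal{A}_i(\Delta)$ through $\operatorname{cl}(S)=\bigcap\mathcal{F}_S(\Delta)$, use the fact that $\mathcal{F}_S(\Delta)\neq\emptyset$ is equivalent to $S\in\Delta$, and then specialize to $S=\emptyset$ using $\mathcal{F}_\emptyset(\Delta)=\mathcal{F}(\Delta)\neq\emptyset$. The only difference is presentational: you split the argument into two directions, while the paper writes it as a single chain of equivalences.
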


\rchange{\begin{proof}
$S\in\mathcal{A}_i(\Delta)$ if and only if $S\in\Delta$ and
$i\in\operatorname{cl}(S)=\bigcap\mathcal{F}_S(\Delta)$, if and only if
$\mathcal{F}_S(\Delta)\neq\emptyset$ and every facet containing $S$ contains $i$, if and only
if $\emptyset\neq\mathcal{F}_S(\Delta)\subseteq\mathcal{F}_i(\Delta)$. Taking $S=\emptyset$
gives $\mathcal{F}_\emptyset(\Delta)=\mathcal{F}(\Delta)\neq\emptyset$, so the condition becomes
$\mathcal{F}(\Delta)=\mathcal{F}_i(\Delta)$, i.e.\ $i$ lies in every facet.
\end{proof}}

\begin{lemma}\label{lem:root-data}\label{lem:R-equivariant}
\textup{(i)} Every $\alpha\in\mathcal{R}(\Delta)$ has exactly one negative coordinate, equal to
$-1$; its index is the \emph{target} $t(\alpha)$, the \emph{support} is
$S(\alpha):=\supp(\alpha+\mathbf{e}_{t(\alpha)})\in\mathcal{A}_{t(\alpha)}(\Delta)$, the
representing pair in \cref{def:roots-comb} is unique, and $0\notin\mathcal{R}(\Delta)$.
\textup{(ii)} \emph{\textup{(}Equivariance.\textup{)}} For $\sigma\in S_n$,
$\operatorname{cl}_{\sigma\cdot\Delta}\circ\,\sigma=\sigma\circ\operatorname{cl}_\Delta$,
$\mathcal{A}_{\sigma(i)}(\sigma\cdot\Delta)=\sigma(\mathcal{A}_i(\Delta))$ and
$\mathcal{R}(\sigma\cdot\Delta)=\sigma\cdot\mathcal{R}(\Delta)$.
\end{lemma}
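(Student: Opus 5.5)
For (i), I will take $\alpha=\beta-\mathbf{e}_i\in\mathcal{R}(\Delta)$ as in \cref{def:roots-comb}. Because $\beta\in\ZZ^n_{\geq0}$ and $\beta_i=0$, the coordinates of $\alpha$ are $\alpha_i=-1$ and $\alpha_j=\beta_j\geq0$ for $j\neq i$. So $\alpha$ has exactly one negative coordinate, it equals $-1$, and it sits at $i$; this is what makes the target $t(\alpha)=i$ well defined. In particular $\alpha\neq0$.

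Uniqueness of the representing pair follows the same way. If $\beta-\mathbf{e}_i=\beta'-\mathbf{e}_{i'}$ are two representations, then $i=i'$ because both are the unique negative index. Then $\beta=\alpha+\mathbf{e}_i=\beta'$. Consequently $S(\alpha)=\supp(\alpha+\mathbf{e}_{t(\alpha)})=\supp\beta$, and this set lies in $\mathcal{A}_{t(\alpha)}(\Delta)$ by the definition of a root.

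For (ii), I will first check the closure identity. Facets are defined by inclusion-maximality, which any bijection of $[n]$ preserves, so $\mathcal{F}(\sigma\cdot\Delta)=\sigma(\mathcal{F}(\Delta))$. Moreover $S\subseteq F$ holds if and only if $\sigma(S)\subseteq\sigma(F)$, which gives $\mathcal{F}_{\sigma(S)}(\sigma\cdot\Delta)=\sigma(\mathcal{F}_S(\Delta))$. Since $\sigma$ is a bijection it commutes with intersections, so
$$\operatorname{cl}_{\sigma\cdot\Delta}(\sigma(S))=\bigcap_{F\in\mathcal{F}_S(\Delta)}\sigma(F)=\sigma\bigl(\operatorname{cl}_\Delta(S)\bigr)$$
for every $S\in\Delta$. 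The faces of $\sigma\cdot\Delta$ are exactly the sets $\sigma(S)$ with $S\in\Delta$, so this covers the whole domain of $\operatorname{cl}_{\sigma\cdot\Delta}$.

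Next I will transport the sets $\mathcal{A}_i$. Take $S\in\Delta$ with $i\notin S$. Then $\sigma(i)\notin\sigma(S)$, and $i\in\operatorname{cl}_\Delta(S)$ holds if and only if $\sigma(i)\in\sigma(\operatorname{cl}_\Delta(S))=\operatorname{cl}_{\sigma\cdot\Delta}(\sigma(S))$. Hence $\mathcal{A}_{\sigma(i)}(\sigma\cdot\Delta)=\sigma(\mathcal{A}_i(\Delta))$, where the right side means $\{\sigma(S):S\in\mathcal{A}_i(\Delta)\}$.

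Finally I will transport the roots. Let $\sigma$ act on $\ZZ^n$ by $(\sigma\cdot a)_{\sigma(j)}=a_j$. This convention gives $\sigma\cdot\mathbf{e}_i=\mathbf{e}_{\sigma(i)}$ and $\supp(\sigma\cdot a)=\sigma(\supp a)$, and the action is linear and preserves $\ZZ^n_{\geq0}$. A root $\beta-\mathbf{e}_i$ of $\Delta$ is therefore sent to $\sigma\cdot\beta-\mathbf{e}_{\sigma(i)}$. Here $\sigma\cdot\beta\in\ZZ^n_{\geq0}$, its $\sigma(i)$-coordinate is $\beta_i=0$, and its support $\sigma(\supp\beta)$ lies in $\mathcal{A}_{\sigma(i)}(\sigma\cdot\Delta)$, so it is a root of $\sigma\cdot\Delta$. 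This shows $\sigma\cdot\mathcal{R}(\Delta)\subseteq\mathcal{R}(\sigma\cdot\Delta)$. Applying the same argument to $\sigma^{-1}$ and the complex $\sigma\cdot\Delta$ gives the reverse inclusion.

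There is no real obstacle in this proof. The only point needing care is fixing the convention for the action on $\ZZ^n$ so that $\sigma\cdot\mathbf{e}_i=\mathbf{e}_{\sigma(i)}$, which matches the way $\sigma$ acts on faces.
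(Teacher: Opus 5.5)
Your proof is correct and follows essentially the same route as the paper: for (i) you read off the target and $\beta=\alpha+\mathbf{e}_i$ from the sign pattern of $\alpha$, and for (ii) you transport $\mathcal{F}_S$, then $\operatorname{cl}$, then $\mathcal{A}_i$, then the roots through $\sigma$. The only cosmetic differences are that you fix the convention $(\sigma\cdot a)_{\sigma(j)}=a_j$ explicitly and obtain the root equality from two inclusions via $\sigma^{-1}$, whereas the paper states the equality directly.
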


\rchange{\begin{proof}
\textup{(i)} Writing $\alpha=\beta-\mathbf{e}_i$ as in \cref{def:roots-comb}, we have
$\alpha_i=-1$ and $\alpha_l=\beta_l\geq0$ for $l\neq i$; so $i$ and $\beta=\alpha+\mathbf{e}_i$
are recovered from $\alpha$, the representing pair is unique, and $\alpha\neq0$.

\textup{(ii)} A facet $F$ of $\Delta$ contains $S$ if and only if $\sigma(F)$ is a facet of
$\sigma\cdot\Delta$ containing $\sigma(S)$, so
$\mathcal{F}_{\sigma(S)}(\sigma\cdot\Delta)=\sigma(\mathcal{F}_S(\Delta))$. Intersecting gives
$\operatorname{cl}_{\sigma\cdot\Delta}(\sigma(S))=\sigma(\operatorname{cl}_\Delta(S))$. Then
$\sigma(S)\in\mathcal{A}_{\sigma(i)}(\sigma\cdot\Delta)$ if and only if
$\sigma(i)\in\operatorname{cl}_{\sigma\cdot\Delta}(\sigma(S))=\sigma(\operatorname{cl}_\Delta(S))$,
if and only if $i\in\operatorname{cl}_\Delta(S)$, if and only if $S\in\mathcal{A}_i(\Delta)$.
Finally $\sigma\cdot(\beta-\mathbf{e}_i)=\sigma\cdot\beta-\mathbf{e}_{\sigma(i)}$ and
$\supp(\sigma\cdot\beta)=\sigma(\supp\beta)$, so
$\mathcal{R}(\sigma\cdot\Delta)=\sigma\cdot\mathcal{R}(\Delta)$. In particular
$\mathcal{R}(\Delta)$ and the family $(\mathcal{A}_i(\Delta))_{i\in[n]}$ determine each other.
\end{proof}}%
\rcomment{Lema antrior sugiero dejar solo la parte b. y agrgar prueba.}

\begin{remark}\label{rem:demazure-name}
The terminology is borrowed from the toric setting, where the homogeneous locally nilpotent
derivations of an affine toric variety correspond to the Demazure roots of its defining cone
\cite{Dem70,Lie10,DL24,DLMFR24}. \cref{thm:roots-agree} shows that \cref{def:roots-comb} is the
correct Stanley-Reisner analogue. Until then, $\mathcal{R}(\Delta)$ is a combinatorial
invariant of $\Delta$ and nothing more.
\end{remark}

\begin{remark}\label{rem:links}
Both $\operatorname{cl}$ and the $\mathcal{A}_i(\Delta)$ read compactly in terms of links, which
is the form recognised in \cref{rem:muller-dictionary}. The facets of
$\operatorname{lk}_\Delta(S):=\{\tau\in\Delta:\tau\cap S=\emptyset,\ \tau\cup S\in\Delta\}$ are the
$F\setminus S$ with $F\in\mathcal{F}_S(\Delta)$, so $i$ lies in every facet containing $S$ exactly
when $i$ is a cone vertex of the link (in particular $i\notin S$):
$$
\operatorname{cl}(S)=S\cup c\bigl(\operatorname{lk}_\Delta(S)\bigr),
\qquad
\mathcal{A}_i(\Delta)=\bigl\{S\in\Delta\ :\ i\in c(\operatorname{lk}_\Delta(S))\bigr\}.
$$
No facet occurs in any $\mathcal{A}_i(\Delta)$, since $\operatorname{lk}_\Delta(F)=\{\emptyset\}$.
Thus $\mathcal{R}(\Delta)$ is the ledger of which links of $\Delta$ are cones.
\end{remark}

\subsection{Exposed and hidden facets}\label{subsec:exposed}

\begin{definition}\label{def:rigid-facet}\label{def:rigidity-free}%
\label{def:fully-hidden}\label{def:private-vertex}
A facet $F$ is \emph{exposed} if there exist $i\in[n]$ and $S\in\mathcal{A}_i(\Delta)$ with
$S\subseteq F$, and \emph{hidden} otherwise. In that situation $i\in F$
automatically: by \cref{lem:closure-Ai} every facet containing $S$ contains $i$, and $F$ is one
of them. The complex $\Delta$ is
\emph{fully exposed} if every facet is exposed, and \emph{fully hidden} if no
facet is. A vertex $i\in F$ is \emph{private} for $F$ if
$\mathcal{F}_{F\setminus\{i\}}(\Delta)=\{F\}$, equivalently if
$F=\operatorname{cl}(F\setminus\{i\})$.
\end{definition}

\begin{lemma}\label{lem:private-vertex}
Let $F\in\mathcal{F}(\Delta)$ with $|F|\geq2$. Then $F$ is exposed if and only if $F$ has a
private vertex. If $n\geq2$, every facet with $|F|=1$ is hidden.
\end{lemma}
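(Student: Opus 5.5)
The plan is to prove both directions of the equivalence directly from \cref{def:rigid-facet} and \cref{lem:closure-Ai}, and then treat the singleton case separately.

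\emph{Private vertex implies exposed.} Suppose $i\in F$ is private, so $\mathcal{F}_{F\setminus\{i\}}(\Delta)=\{F\}$. Put $S:=F\setminus\{i\}$. Then $S\in\Delta$ and $i\notin S$. Moreover $\emptyset\neq\mathcal{F}_S(\Delta)=\{F\}\subseteq\mathcal{F}_i(\Delta)$, since $i\in F$. By \cref{lem:closure-Ai}, $S\in\mathcal{A}_i(\Delta)$ and $S\subseteq F$, so $F$ is exposed. This direction does not use $|F|\geq2$.

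\emph{Exposed implies private vertex.} Let $S\in\mathcal{A}_i(\Delta)$ with $S\subseteq F$. As remarked after \cref{def:rigid-facet}, $i\in F$, and $i\notin S$, so $S\subseteq F\setminus\{i\}$. The natural move is monotonicity: $\mathcal{F}_{F\setminus\{i\}}(\Delta)\subseteq\mathcal{F}_S(\Delta)\subseteq\mathcal{F}_i(\Delta)$. Hence every facet $G$ containing $F\setminus\{i\}$ also contains $i$, so $G\supseteq F$, and maximality of facets forces $G=F$. Therefore $\mathcal{F}_{F\setminus\{i\}}(\Delta)=\{F\}$ and $i$ is private. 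The family is nonempty because $F$ itself belongs to it.

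Notice this argument also works for $|F|=1$, but there the resulting set $S$ must be $\emptyset$. That is the real content of the last clause, and the main point of care.

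\emph{Singleton facets when $n\geq2$.} Let $F=\{i\}$ be a facet and suppose it is exposed via $S\in\mathcal{A}_j(\Delta)$ with $S\subseteq F$. We have $j\in F$, so $j=i$ and $S\subseteq F\setminus\{i\}=\emptyset$. By \cref{lem:closure-Ai}, $\emptyset\in\mathcal{A}_i(\Delta)$ means $i$ is a cone vertex, so $i$ lies in every facet. Since $n\geq2$, there is a vertex $k\neq i$, and by the standing convention $\{k\}\in\Delta$, so $k$ lies in some facet $G$. Then $G\ni i,k$, so $G\supsetneq\{i\}$, contradicting that $\{i\}$ is a facet. Hence $F$ is hidden.

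I expect no real obstacle. The only subtle point is this degenerate case, which explains why the equivalence is stated for $|F|\geq2$: a singleton facet trivially has a "private vertex" in the sense of \cref{def:private-vertex}, because $\mathcal{F}_\emptyset=\{F\}$ when $\Delta$ is a point, yet it is hidden once $n\geq2$.
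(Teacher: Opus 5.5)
Your proof is correct and follows the paper's argument essentially step for step: the monotonicity chain $\mathcal{F}_{F\setminus\{i\}}(\Delta)\subseteq\mathcal{F}_S(\Delta)\subseteq\mathcal{F}_i(\Delta)$ gives exposed $\Rightarrow$ private, the choice $S:=F\setminus\{i\}$ gives the converse, and the cone-vertex-plus-maximality argument handles singleton facets (you only make explicit the use of the standing convention $\{k\}\in\Delta$). One small correction to your closing remark: for $n\geq2$ a singleton facet $\{i\}$ has no private vertex either, since $\mathcal{F}_\emptyset(\Delta)=\mathcal{F}(\Delta)\neq\{\{i\}\}$, so the equivalence also holds for singleton facets (as your own observation that both directions go through for $|F|=1$ already shows), and there is no mismatch that the hypothesis $|F|\geq2$ is excluding.
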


\begin{proof}
If $S\in\mathcal{A}_i(\Delta)$ with $S\subseteq F$ then $i\in F$, $S\subseteq F\setminus\{i\}$ and
$\mathcal{F}_{F\setminus\{i\}}(\Delta)\subseteq\mathcal{F}_S(\Delta)\subseteq\mathcal{F}_i(\Delta)$
by \cref{lem:closure-Ai}, so a facet containing $F\setminus\{i\}$ contains $i$, hence $F$, hence
equals $F$. Conversely a private $i$ gives $S:=F\setminus\{i\}\neq\emptyset$ with
$\mathcal{F}_S(\Delta)=\{F\}\subseteq\mathcal{F}_i(\Delta)$. If $F=\{i\}$ is exposed then
$S\subseteq F$, $i\notin S$ force $S=\emptyset$, so $i$ is a cone vertex and maximality gives
$\mathcal{F}(\Delta)=\{\{i\}\}$, i.e.\ $n=1$.
\end{proof}

\begin{corollary}\label{cor:fe-private}
For $n\geq2$, $\Delta$ is fully exposed if and only if every facet has a private vertex.\qed
\end{corollary}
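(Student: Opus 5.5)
The corollary follows by combining \cref{def:rigid-facet} with \cref{lem:private-vertex}; the only issue is the facets with a single vertex. By definition, $\Delta$ is fully exposed exactly when every facet $F\in\mathcal{F}(\Delta)$ is exposed. So it suffices to show that, for $n\geq2$, a facet is exposed if and only if it has a private vertex. We split according to $|F|$.

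\textbf{Case $|F|\geq2$.} Here \cref{lem:private-vertex} gives the equivalence directly.

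\textbf{Case $|F|=1$, say $F=\{i\}$.} By the second assertion of \cref{lem:private-vertex}, a singleton facet is hidden when $n\geq2$. So we must check that it also has no private vertex. The only candidate is $i$. It would be private exactly when $\mathcal{F}_{\emptyset}(\Delta)=\{F\}$, i.e.\ $\mathcal{F}(\Delta)=\{\{i\}\}$. By the standing convention every singleton $\{j\}$, $j\in[n]$, is a face, so $\{i\}$ being the only facet forces $[n]=\{i\}$, that is $n=1$. This contradicts $n\geq2$. Hence, for $n\geq2$, a singleton facet is neither exposed nor has a private vertex, and the equivalence holds in this case as well.

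Applying the facetwise equivalence to all facets gives the corollary. There is no real obstacle; the one point needing care is the $|F|=1$ case, which reduces to the standing convention that every vertex is a face.
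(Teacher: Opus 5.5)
Your proof is correct and follows the paper's route: the paper states the corollary without proof as an immediate consequence of \cref{lem:private-vertex}. Your one addition is the explicit check that a singleton facet $\{i\}$ has no private vertex when $n\geq2$, because $\mathcal{F}_\emptyset(\Delta)=\mathcal{F}(\Delta)=\{\{i\}\}$ would force $n=1$. The paper leaves this step implicit, and it is what makes both sides of the equivalence fail together for singleton facets.
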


\begin{example}\label{ex:exposed-hidden}
\cref{fig:exposed-hidden} shows the dichotomy on three small graphs (hidden facets are drawn thick).
In~(a) the middle edge $\{2,3\}$ is hidden and the two outer edges are exposed, with private vertices $2$ and $3$.
In~(b) the boundary of a triangle is fully hidden: no edge has a private vertex.
In~(c) the star is fully exposed: every edge has a private leaf.
For graphs the dichotomy is characterised in \cref{prop:graph-exposed}.
\begin{figure}[H]
\centering
\begin{tikzpicture}[scale=0.85, v/.style={circle,fill,inner sep=1.3pt},
                    hid/.style={line width=2.6pt,gray!50}]
\begin{scope}
  \draw[hid] (1,0.85)--(2,0.85);
  \draw (0,0.85)--(3,0.85);
  \foreach \x/\l in {0/1,1/2,2/3,3/4} {\node[v,label=below:{\small $\l$}] at (\x,0.85) {};}
  \node at (1.5,-0.75) {\small (a)};
\end{scope}
\begin{scope}[xshift=5cm]
  \draw[hid] (0,0)--(1.9,0)--(0.95,1.65)--cycle;
  \draw (0,0)--(1.9,0)--(0.95,1.65)--cycle;
  \node[v] at (0,0) {}; \node[v] at (1.9,0) {}; \node[v] at (0.95,1.65) {};
  \node at (0.95,-0.75) {\small (b)};
\end{scope}
\begin{scope}[xshift=9.2cm]
  \draw (0,0)--(-0.95,1.65); \draw (0,0)--(0.95,1.65); \draw (0,0)--(0,-0.05);
  \draw (0,0)--(0,0);
  \node[v] at (0,0) {};
  \node[v] at (-0.95,1.65) {}; \node[v] at (0.95,1.65) {}; \node[v] at (0,1.65) {};
  \draw (0,0)--(0,1.65);
  \node at (0,-0.75) {\small (c)};
\end{scope}
\end{tikzpicture}
\caption{The exposed/hidden dichotomy on three small graphs. Hidden facets are drawn thick.}
\label{fig:exposed-hidden}
\end{figure}
\end{example}

\begin{remark}\label{rem:morse}
In discrete Morse theory \cite{For98,Koz08} a face contained in exactly one facet, of dimension
one larger, is a \emph{free face}, and the pair defines an elementary collapse. Thus a facet
$F$ with $|F|\geq2$ is exposed precisely when some $F\setminus\{i\}$ is free, i.e.\ when $F$
admits an elementary collapse; a fully exposed complex is one in which every facet does so
individually, the collapses being required neither to be compatible nor to be iterated. Being
fully exposed is therefore incomparable with collapsibility: the path with facets
$\{1,2\},\{2,3\},\{3,4\}$ is collapsible but has the hidden facet $\{2,3\}$, while two disjoint
edges form a fully exposed complex that is not collapsible.
\end{remark}

\begin{proposition}\label{prop:rigid-cl}
The following are equivalent:
\begin{enumerate}[label=\textup{(\alph*)}]
\item\label{rc.a} $\mathcal{R}(\Delta)=\emptyset$;
\item\label{rc.b} $\operatorname{cl}=\mathrm{id}_\Delta$;
\item\label{rc.c} every face of $\Delta$ is an intersection of facets;
\item\label{rc.d} $\mathcal{A}_i(\Delta)=\emptyset$ for every $i\in[n]$;
\item\label{rc.e} $\Delta$ is fully hidden.
\end{enumerate}
\end{proposition}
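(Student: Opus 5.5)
I will prove the equivalences along the cycle (a)$\Leftrightarrow$(d)$\Leftrightarrow$(b)$\Leftrightarrow$(c), and then attach (e) to (d). Everything follows from the definitions and \cref{lem:closure-Ai}.

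\emph{(a)$\Leftrightarrow$(d).} By \cref{def:roots-comb}, $\mathcal{R}(\Delta)\neq\emptyset$ exactly when there is a pair $(\beta,i)$ with $\supp\beta\in\mathcal{A}_i(\Delta)$. Conversely, any $S\in\mathcal{A}_i(\Delta)$ yields the root $\mathbf{e}_S-\mathbf{e}_i$. Hence $\mathcal{R}(\Delta)=\emptyset$ if and only if every $\mathcal{A}_i(\Delta)$ is empty.

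\emph{(d)$\Leftrightarrow$(b).} Since $\operatorname{cl}$ is extensive, $\operatorname{cl}(S)\neq S$ means some $i\in\operatorname{cl}(S)\setminus S$ exists. That is precisely $S\in\mathcal{A}_i(\Delta)$.

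\emph{(b)$\Leftrightarrow$(c).} The fixed points of $\operatorname{cl}$ are exactly the faces that are intersections of facets, as recorded in \cref{subsec:closure}. One direction is clear: if $S=\bigcap\mathcal{S}$ for a nonempty family of facets, then every $F\in\mathcal{S}$ contains $S$, so $\operatorname{cl}(S)\subseteq S$. For $S=\emptyset$ one should allow the intersection of all facets, with $\emptyset$ counting as an intersection of facets only when $c(\Delta)=\emptyset$. I will take (c) to mean intersections of nonempty families, so this case is consistent with $\operatorname{cl}(\emptyset)=c(\Delta)$.

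\emph{(d)$\Rightarrow$(e).} This is immediate from \cref{def:rigid-facet}: being exposed requires some $S\in\mathcal{A}_i(\Delta)$.

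\emph{(e)$\Rightarrow$(d).} This is the step carrying content. Suppose $S\in\mathcal{A}_i(\Delta)$. Since $S\in\Delta$, there is some facet $F\supseteq S$, and then $F$ is exposed by the very definition, since the definition only asks for some $S\subseteq F$ in some $\mathcal{A}_i$. So if no facet is exposed, all $\mathcal{A}_i$ are empty.

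The only potential obstacle is the $S=\emptyset$ and cone-vertex edge case in (b)$\Leftrightarrow$(c), together with the convention on $|F|=1$ facets from \cref{lem:private-vertex}. Neither actually interferes, because every equivalence is routed through $\mathcal{A}_i$ rather than through private vertices. I will check that a cone vertex $i$ gives $\emptyset\in\mathcal{A}_i(\Delta)$, which makes every facet exposed, and that this is consistent with the statement.
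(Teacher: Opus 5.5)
Your proof is correct and takes essentially the same route as the paper's. Both arguments pass every equivalence through the sets $\mathcal{A}_i(\Delta)$: extensivity of $\operatorname{cl}$ gives (b)$\Leftrightarrow$(d), the root $\mathbf{e}_S-\mathbf{e}_i$ gives (a)$\Leftrightarrow$(d), and the observation that any facet containing some $S\in\mathcal{A}_i(\Delta)$ is exposed gives (d)$\Leftrightarrow$(e). Your extra remark on $S=\emptyset$ is consistent with $\operatorname{cl}(\emptyset)=c(\Delta)$; it is left implicit in the paper.
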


\rchange{\begin{proof}
The equivalence of \ref{rc.b} and \ref{rc.c} is the definition of $\operatorname{cl}$ together
with $S\subseteq\operatorname{cl}(S)$ for every $S\in\Delta$. Conditions \ref{rc.b} and
\ref{rc.d} are equivalent because some $\mathcal{A}_i(\Delta)$ is nonempty if and only if
$\operatorname{cl}(S)\supsetneq S$ for some $S\in\Delta$. Conditions \ref{rc.d} and \ref{rc.a}
are equivalent because a root with target $i$ exists if and only if
$\mathcal{A}_i(\Delta)\neq\emptyset$, by \cref{def:roots-comb} and \cref{lem:root-data}.
Finally, \ref{rc.d} and \ref{rc.e} are equivalent: if $S\in\mathcal{A}_i(\Delta)$ then any
facet containing $S$ is exposed; conversely an exposed facet exhibits a nonempty
$\mathcal{A}_i(\Delta)$.
\end{proof}}%
\rcomment{Solo por estilo consistente sugiero la prueba anterior dejarla como sigue.}

\begin{remark}\label{rem:muller-dictionary}
Both extremes already have names in \cite{Mul95}. M\"uller calls $\sigma\in\Delta$
\emph{bounding} if it lies in $\operatorname{lk}_\Delta(\{j\})$ for exactly one vertex $j$,
writes $\partial\Delta$ for the subcomplex generated by the bounding faces, and calls $\Delta$
\emph{closed} if $\partial\Delta=\emptyset$ \cite[p.~135]{Mul95}. Since
$\sigma\in\operatorname{lk}_\Delta(\{j\})$ if and only if
$\{j\}\in\operatorname{lk}_\Delta(\sigma)$, the vertices $j$ with
$\sigma\in\operatorname{lk}_\Delta(\{j\})$ are the vertices of
$\operatorname{lk}_\Delta(\sigma)$; so $\sigma$ is bounding exactly when
$\operatorname{lk}_\Delta(\sigma)=\{\emptyset,\{i\}\}$ for a single vertex $i$, in which case
$F:=\sigma\cup\{i\}$ is a facet and $\sigma=F\setminus\{i\}$. By \cref{def:private-vertex} and
\cref{rem:links} this says that $i$ is a private vertex of $F$ if and only if $F\setminus\{i\}$
is a bounding face, and every bounding face arises so. Hence $\Delta$ is fully exposed exactly
when every facet has a bounding codimension-one face, and
$$
\Delta\ \text{is fully hidden}
\quad\Longleftrightarrow\quad\mathcal{R}(\Delta)=\emptyset
\quad\Longleftrightarrow\quad\partial\Delta=\emptyset,
$$
the first equivalence being \ref{rc.a}$\Leftrightarrow$\ref{rc.e} of \cref{prop:rigid-cl}. The
second can also be read off from \cite[Proposition~4(iii)]{Mul95}, because the \emph{negative}
roots of \cite[Proposition~3]{Mul95} are exactly the elements of $\mathcal{R}(\Delta)$: that
proposition declares $\beta-\mathbf{e}_i$, with $\beta\in\ZZ^n_{\geq0}$ and $\beta_i=0$, to be a
root exactly when $S:=\supp\beta$ is a face with $i\notin S$ lying in no face $\sigma$ with
$\sigma\cup\{i\}\notin\Delta$, i.e.\ exactly when $S\in\mathcal{A}_i(\Delta)$.
\end{remark}

A cone vertex is visible in $\mathcal{R}(\Delta)$ in the strongest way: the roots targeting it
record \emph{all} faces of $\Delta$.

\begin{lemma}\label{lem:cone-roots}
For $i\in[n]$ the following are equivalent:
\begin{enumerate}[label=\textup{(\alph*)}]
\item\label{cr.a} $i$ is a cone vertex of $\Delta$;
\item\label{cr.b} $\emptyset\in\mathcal{A}_i(\Delta)$;
\item\label{cr.c} $-\mathbf{e}_i\in\mathcal{R}(\Delta)$.
\end{enumerate}
In that case
\begin{enumerate}[label=\textup{(\arabic*)}]
\item\label{cr.1} $\mathcal{A}_i(\Delta)=\{S\in\Delta: i\notin S\}$;
\item\label{cr.2} $\Delta=\mathcal{A}_i(\Delta)\cup\{S\cup\{i\}:S\in\mathcal{A}_i(\Delta)\}$, so $\mathcal{R}(\Delta)$ determines $\Delta$;
\item\label{cr.3} $\Delta$ is fully exposed.
\end{enumerate}
\end{lemma}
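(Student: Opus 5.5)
The plan is to dispatch the three equivalences with \cref{lem:closure-Ai} and \cref{lem:root-data}, and then read off \ref{cr.1}--\ref{cr.3} directly from the definitions.

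\textbf{The equivalences.} The step \ref{cr.a}$\Leftrightarrow$\ref{cr.b} is exactly the final clause of \cref{lem:closure-Ai}. For \ref{cr.b}$\Rightarrow$\ref{cr.c}, take $\beta=0$ in \cref{def:roots-comb}: then $\supp\beta=\emptyset\in\mathcal{A}_i(\Delta)$, so $-\mathbf{e}_i\in\mathcal{R}(\Delta)$. For \ref{cr.c}$\Rightarrow$\ref{cr.b}, the uniqueness of the representing pair in \cref{lem:root-data}(i) writes $-\mathbf{e}_i=\beta-\mathbf{e}_i$ with $\beta=0$. Hence $\emptyset=\supp\beta\in\mathcal{A}_i(\Delta)$.

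\textbf{Item \ref{cr.1}.} Assume $i$ is a cone vertex and let $S\in\Delta$ with $i\notin S$. Then $\mathcal{F}_S(\Delta)\neq\emptyset$, since $S$ is a face. Every facet contains $i$, so $\mathcal{F}_S(\Delta)\subseteq\mathcal{F}(\Delta)=\mathcal{F}_i(\Delta)$, and \cref{lem:closure-Ai} gives $S\in\mathcal{A}_i(\Delta)$. The reverse inclusion $\mathcal{A}_i(\Delta)\subseteq\{S\in\Delta:i\notin S\}$ is part of \cref{def:Ai}.

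\textbf{Item \ref{cr.2}.} For the inclusion $\supseteq$: if $S\in\mathcal{A}_i(\Delta)$, choose a facet $F\supseteq S$. Since $i$ is a cone vertex, $i\in F$, so $S\cup\{i\}\subseteq F$ is a face. For the inclusion $\subseteq$: any face $T$ either avoids $i$, and then lies in $\mathcal{A}_i(\Delta)$ by \ref{cr.1}, or contains $i$. In the latter case $T\setminus\{i\}$ is a face avoiding $i$, so $T=(T\setminus\{i\})\cup\{i\}$ with $T\setminus\{i\}\in\mathcal{A}_i(\Delta)$.

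\textbf{$\mathcal{R}(\Delta)$ determines $\Delta$.} The cone vertices are recognized as the $i$ with $-\mathbf{e}_i\in\mathcal{R}(\Delta)$. For such an $i$, the family $\mathcal{A}_i(\Delta)$ is the set of supports $S(\alpha)$ of the roots $\alpha\in\mathcal{R}_i(\Delta)$, by \cref{lem:root-data}(i). Then \ref{cr.2} reconstructs $\Delta$ from $\mathcal{A}_i(\Delta)$.

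\textbf{Item \ref{cr.3}.} For every facet $F$ we have $\emptyset\subseteq F$ and $\emptyset\in\mathcal{A}_i(\Delta)$, so $F$ is exposed by \cref{def:rigid-facet} directly. This deliberately avoids the private-vertex criterion of \cref{lem:private-vertex}, so the degenerate case $|F|=1$ needs no separate treatment. It is consistent with that lemma: a singleton facet $\{i\}$ together with a cone vertex forces $n=1$.

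None of the steps is a real obstacle; the only care needed is in the determination claim, where one must check that $\mathcal{A}_i(\Delta)$ is genuinely recovered from $\mathcal{R}(\Delta)$ alone. That is exactly what the uniqueness of supports in \cref{lem:root-data}(i) provides.
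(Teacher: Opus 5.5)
Your proof is correct and follows the paper's argument essentially step for step: \ref{cr.a}$\Leftrightarrow$\ref{cr.b} via \cref{lem:closure-Ai}, $\beta=0$ for \ref{cr.b}$\Leftrightarrow$\ref{cr.c}, and the same direct verifications of \ref{cr.1}--\ref{cr.3}. Your extra detail on recovering $\mathcal{A}_i(\Delta)$ from the roots with target $i$ via \cref{lem:root-data}(i) spells out what the paper leaves implicit in ``so $\mathcal{R}(\Delta)$ determines $\Delta$''.
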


\begin{proof}
\ref{cr.a}$\Leftrightarrow$\ref{cr.b} is \cref{lem:closure-Ai};
\ref{cr.b}$\Leftrightarrow$\ref{cr.c} since $-\mathbf{e}_i=\beta-\mathbf{e}_i$ forces $\beta=0$.
If $i$ is a cone vertex then $\mathcal{F}_S(\Delta)\subseteq\mathcal{F}(\Delta)=\mathcal{F}_i(\Delta)$
always, so $S\in\mathcal{A}_i(\Delta)$ reduces to $S\in\Delta$, which is \ref{cr.1}; \ref{cr.2}
follows since $T\in\Delta$ is $T$ or $(T\setminus\{i\})\cup\{i\}$ according as $i\notin T$ or
$i\in T$, and $S\in\mathcal{A}_i(\Delta)$ gives $S\cup\{i\}\subseteq F$ for any facet
$F\supseteq S$; and \ref{cr.3} because $\emptyset\subseteq F$ for every facet.
\end{proof}

Coning can therefore be undone.

\begin{lemma}\label{lem:cone-cancel}
If $w$ is a cone vertex of $\Lambda$, then $\Lambda=\operatorname{lk}_\Lambda(w)\ast w$.
Consequently coning is injective on isomorphism classes: for new vertices $v,v'$, if
$\Delta\ast v\cong\Delta'\ast v'$ then $\Delta\cong\Delta'$.
\end{lemma}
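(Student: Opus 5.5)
The plan is to prove the first assertion by comparing faces directly, and then deduce the cancellation statement from it. Let $w$ be a cone vertex of $\Lambda$ and put $\Gamma:=\operatorname{lk}_\Lambda(w)=\{\tau\in\Lambda:w\notin\tau,\ \tau\cup\{w\}\in\Lambda\}$. We check $\Lambda=\Gamma\ast w$, where the cone is taken on the vertex set of $\Lambda$ minus $w$, with $w$ as the new vertex. This is essentially \cref{lem:cone-roots}\ref{cr.1}--\ref{cr.2} restated in the language of links.

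For $\subseteq$, take $T\in\Lambda$ and a facet $F\supseteq T$. Since $w$ is a cone vertex, $w\in F$, so $T\cup\{w\}\subseteq F$ lies in $\Lambda$. Hence $T\setminus\{w\}\in\Gamma$, and $T$ is either $T\setminus\{w\}$ or $(T\setminus\{w\})\cup\{w\}$; in both cases $T\in\Gamma\ast w$. For $\supseteq$, if $\tau\in\Gamma$ then $\tau\cup\{w\}\in\Lambda$ by definition, and $\tau\in\Lambda$ because $\Lambda$ is closed under subsets. One small point to record: the standing convention that all singletons are faces also holds for $\Gamma$ on $[n]\setminus\{w\}$. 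Indeed, for each vertex $j\neq w$, any facet containing $j$ also contains $w$, so $\{j\}\in\Gamma$.

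For the consequence, let $\phi\colon\Delta\ast v\to\Delta'\ast v'$ be an isomorphism, i.e.\ a vertex bijection carrying faces to faces in both directions. The first step is to identify the links with the original complexes: $\operatorname{lk}_{\Delta\ast v}(v)=\Delta$, since $\tau\not\ni v$ and $\tau\cup\{v\}\in\Delta\ast v$ exactly when $\tau\in\Delta$. The difficulty is that $\phi(v)$ need not equal $v'$, so a little care is needed here. Since $\phi$ preserves facets, it maps cone vertices to cone vertices, so $w:=\phi(v)$ is a cone vertex of $\Delta'\ast v'$, and $\phi$ restricts to an isomorphism $\Delta=\operatorname{lk}(v)\cong\operatorname{lk}_{\Delta'\ast v'}(w)$.

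It therefore remains to show that any two cone vertices $w,v'$ of a complex $\Lambda$ have isomorphic links. The transposition $(w\,v')$ fixes every facet setwise, since every facet contains both vertices. Hence it is an automorphism of $\Lambda$, and it carries $\operatorname{lk}(w)$ onto $\operatorname{lk}(v')=\Delta'$. Composing the two isomorphisms gives $\Delta\cong\Delta'$.

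The only step I expect to require attention is this issue that $\phi(v)$ may differ from $v'$; the swap of cone vertices resolves it, and everything else is routine unwinding of definitions.
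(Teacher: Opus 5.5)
Your proof is correct and follows the paper's own: both identify $\Lambda$ with the cone over $\operatorname{lk}_\Lambda(w)$, send $v$ to a cone vertex $\phi(v)$ of $\Delta'\ast v'$, and, when $\phi(v)\neq v'$, exchange the two cone vertices. Your transposition $(w\,v')$ is the same relabeling the paper uses when it writes $\operatorname{lk}_{\Delta'}(w')\ast v'\cong\operatorname{lk}_{\Delta'}(w')\ast w'=\Delta'$; you phrase it as an automorphism of the cone rather than through a second application of the first assertion.
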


\begin{proof}
Since $w$ lies in every facet, $S\cup\{w\}\in\Lambda$ for every $S\in\Lambda$; hence
$\operatorname{lk}_\Lambda(w)=\{S\in\Lambda:w\notin S\}$ and $\Lambda=\operatorname{lk}_\Lambda(w)\ast w$.
Being a cone vertex is intrinsic, so an isomorphism $\Delta\ast v\cong\Delta'\ast v'$ carries $v$ to a
cone vertex $w'$ of $\Delta'\ast v'$ and identifies $\Delta=\operatorname{lk}(v)$ with
$\operatorname{lk}(w')$. If $w'=v'$ this link is $\Delta'$; otherwise $w'\in c(\Delta')$ and it is
$\operatorname{lk}_{\Delta'}(w')\ast v'\cong\operatorname{lk}_{\Delta'}(w')\ast w'=\Delta'$.
\end{proof}

\subsection{Reconstruction}\label{subsec:reconstruction}

\begin{theorem}\label{thm:core}
Let $\Delta,\Delta'$ be fully exposed complexes on $[n]$ with
$\mathcal{R}(\Delta)=\mathcal{R}(\Delta')$. Then $\Delta=\Delta'$.
\end{theorem}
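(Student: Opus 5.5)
The idea is to show that a fully exposed complex is generated, as a simplicial complex, by faces that can be read off directly from $\mathcal{R}(\Delta)$. Since $\mathcal{R}(\Delta)$ and the family $(\mathcal{A}_i(\Delta))_{i\in[n]}$ determine each other (\cref{lem:root-data}), the hypothesis $\mathcal{R}(\Delta)=\mathcal{R}(\Delta')$ gives $\mathcal{A}_i(\Delta)=\mathcal{A}_i(\Delta')$ for every $i$. It therefore suffices to prove the identity
$$
\Delta=\bigl\langle S\cup\{i\} \;:\; i\in[n],\ S\in\mathcal{A}_i(\Delta)\bigr\rangle
$$
for every fully exposed $\Delta$ on $[n]$ with $n\geq2$. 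Here $\langle\cdot\rangle$ denotes the subsets of the listed sets, i.e.\ the complex they generate. The right-hand side depends only on the family $(\mathcal{A}_i)$, so the same identity for $\Delta'$ then yields $\Delta=\Delta'$.

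For the inclusion ``$\supseteq$'', take $S\in\mathcal{A}_i(\Delta)$. By \cref{lem:closure-Ai}, $\mathcal{F}_S(\Delta)$ is nonempty and each of its members contains $i$. Any facet $F\supseteq S$ therefore contains $S\cup\{i\}$, so $S\cup\{i\}\in\Delta$. For ``$\subseteq$'', it suffices to produce every facet $F$ as some $S\cup\{i\}$. Since $n\geq2$, \cref{lem:private-vertex} shows that a facet with $|F|=1$ would be hidden, so every facet of a fully exposed $\Delta$ has $|F|\geq2$. By \cref{cor:fe-private}, $F$ then has a private vertex $i$. Put $S:=F\setminus\{i\}$. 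Then $\mathcal{F}_S(\Delta)=\{F\}\subseteq\mathcal{F}_i(\Delta)$, so \cref{lem:closure-Ai} gives $S\in\mathcal{A}_i(\Delta)$, and $F=S\cup\{i\}$ is among the generators. Every face lies in some facet, which proves the identity.

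The main point needing care is the degenerate range of $n$, where the private-vertex characterization of \cref{lem:private-vertex} is unavailable. For $n=1$ the standing convention $\{1\}\in\Delta$ forces $\Delta=\Delta'=\{\emptyset,\{1\}\}$, and for $n=0$ there is nothing to prove. So these cases are handled directly, without appealing to roots. Apart from this, the argument is a direct translation of the definitions.
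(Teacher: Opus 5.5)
Your proof is correct and is essentially the paper's argument: both recover $\Delta$ from the family $\{S\cup\{i\} : S\in\mathcal{A}_i(\Delta)\}$, using that each such set is a face and that every facet arises from one of its private vertices. The paper phrases this as $\mathcal{F}(\Delta)=\operatorname{Max}_\subseteq$ of that family (restricted to $S\neq\emptyset$) after treating the cone-vertex case separately via \cref{lem:cone-roots}, whereas you take the generated complex and allow $S=\emptyset$, which covers the cone case without a separate argument.
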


\begin{proof}
If $-\mathbf{e}_i$ lies in the common root set, then $i$ is a cone vertex of both and
\cref{lem:cone-roots}\ref{cr.2} reconstructs each; so assume neither has a cone vertex, whence
$n\geq2$ and all facets have at least two vertices by \cref{lem:private-vertex}. Put
$\mathcal{A}(\Delta''):=\{S\cup\{i\}: \emptyset\neq S\in\mathcal{A}_i(\Delta'')\}$, determined by
$\mathcal{R}(\Delta'')$ via \cref{lem:root-data}; we claim
$\mathcal{F}(\Delta)=\operatorname{Max}_\subseteq\mathcal{A}(\Delta)$, which suffices. A facet
$F$ has a private vertex $i$ (\cref{cor:fe-private}), so $F\in\mathcal{A}(\Delta)$; it is
maximal there, since $F\subsetneq S'\cup\{i'\}$ with $S'\in\mathcal{A}_{i'}(\Delta)$ would give a
facet $F''\supseteq S'\cup\{i'\}\supsetneq F$ by \cref{lem:closure-Ai}. Conversely, for
$T=S\cup\{i\}$ maximal in $\mathcal{A}(\Delta)$ and any facet $F''\supseteq S$, the same
inclusion gives $T\subseteq F''\in\mathcal{A}(\Delta)$, so $T=F''$.
\end{proof}

\begin{corollary}\label{cor:core-permuted}
Let $\Delta,\Delta'$ be fully exposed complexes on $[n]$ and suppose that
$\sigma\cdot\mathcal{R}(\Delta)=\mathcal{R}(\Delta')$ for some $\sigma\in S_n$. Then
$\sigma\cdot\Delta=\Delta'$; in particular $\Delta\cong\Delta'$ as abstract simplicial
complexes.
\end{corollary}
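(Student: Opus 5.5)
The plan is to reduce the corollary to \cref{thm:core} by transporting $\Delta$ along $\sigma$. Put $\Delta'':=\sigma\cdot\Delta$, a simplicial complex on $[n]$. By the equivariance in \cref{lem:R-equivariant}\textup{(ii)}, $\mathcal{R}(\sigma\cdot\Delta)=\sigma\cdot\mathcal{R}(\Delta)$. Combined with the hypothesis, this gives $\mathcal{R}(\Delta'')=\mathcal{R}(\Delta')$.

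The only remaining input for \cref{thm:core} is that $\Delta''$ is fully exposed. Exposedness is a purely combinatorial property, so it is preserved by relabelling vertices. Concretely, the facets of $\sigma\cdot\Delta$ are the sets $\sigma(F)$ with $F\in\mathcal{F}(\Delta)$, and \cref{lem:R-equivariant}\textup{(ii)} gives $\mathcal{A}_{\sigma(i)}(\sigma\cdot\Delta)=\sigma(\mathcal{A}_i(\Delta))$. Hence if a facet $F$ contains some $S\in\mathcal{A}_i(\Delta)$, then $\sigma(F)$ contains $\sigma(S)\in\mathcal{A}_{\sigma(i)}(\Delta'')$. By \cref{def:rigid-facet}, every facet of $\Delta''$ is therefore exposed. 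The standing convention that all singletons are faces is also clearly preserved by $\sigma$.

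Applying \cref{thm:core} to the fully exposed pair $\Delta'',\Delta'$ then yields $\sigma\cdot\Delta=\Delta'$. The map $\sigma$ is a bijection of vertex sets carrying faces to faces in both directions, so it is an isomorphism of abstract simplicial complexes $\Delta\cong\Delta'$.

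None of these steps is a real obstacle. The only point needing care is checking that full exposedness transfers under $\sigma$, and that is immediate from the equivariance of the families $\mathcal{A}_i$.
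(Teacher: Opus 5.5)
Your proposal is correct and follows the paper's proof exactly. You transport $\Delta$ to $\sigma\cdot\Delta$, use \cref{lem:R-equivariant}(ii) both for $\mathcal{R}(\sigma\cdot\Delta)=\mathcal{R}(\Delta')$ and (via $\mathcal{A}_{\sigma(i)}(\sigma\cdot\Delta)=\sigma(\mathcal{A}_i(\Delta))$) for full exposedness of $\sigma\cdot\Delta$, and then apply \cref{thm:core}.
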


\begin{proof}
By \rchange{\cref{lem:R-equivariant}(ii)}, $\mathcal{A}_{\sigma(i)}(\sigma\cdot\Delta)=\sigma(\mathcal{A}_i(\Delta))$,
so $\sigma\cdot\Delta$ is again fully exposed, and
$\mathcal{R}(\sigma\cdot\Delta)=\sigma\cdot\mathcal{R}(\Delta)=\mathcal{R}(\Delta')$. Now apply
\cref{thm:core} to $\sigma\cdot\Delta$ and $\Delta'$.
\end{proof}

The fully exposed hypothesis cannot be dropped.

\begin{example}\label{ex:rigid-invisible}
On $[5]$ let $\Delta_1=P_5$ have facets $\{1,2\},\{2,3\},\{3,4\},\{4,5\}$ and let $\Delta_2$ be
$P_4$ on $\{1,2,4,5\}$ together with the isolated vertex $3$. Neither has a cone vertex, and
since all facets have at most two elements only singletons occur in the $\mathcal{A}_i$; for
both, the only inclusions $\mathcal{F}_j\subseteq\mathcal{F}_i$ with $j\neq i$ occur at
$(j,i)=(1,2)$ and $(5,4)$, so $\mathcal{A}_2=\{\{1\}\}$, $\mathcal{A}_4=\{\{5\}\}$ and the rest
are empty:
$$
\mathcal{R}(\Delta_1)=\mathcal{R}(\Delta_2)=\{k\mathbf{e}_1-\mathbf{e}_2,\ k\mathbf{e}_5-\mathbf{e}_4: k\geq1\}.
$$
\rchange{See \cref{fig:rigid-invisible}.} Both share the exposed facets $\{1,2\},\{4,5\}$,
but the hidden ones differ:
$\{2,3\},\{3,4\}$ versus $\{2,4\},\{3\}$. Hence $\Delta_1\neq\Delta_2$ with equal root sets.
In fact even $\Aut(X_{\Delta_1})\cong\Aut(X_{\Delta_2})$ as ind-groups
(\cref{prop:same-aut}).
\begin{figure}[H]
\centering
\begin{tikzpicture}[scale=0.9, v/.style={circle,fill,inner sep=1.4pt}]
\begin{scope}
\draw (0,0)--(4,0);
\foreach \x/\l in {0/1,1/2,2/3,3/4,4/5} {\node[v,label=below:{\small $\l$}] at (\x,0) {};}
\node at (2,-1.0) {\small $\Delta_1$};
\end{scope}
\begin{scope}[xshift=6.4cm]
\draw (0,0)--(1,0)--(2.4,0)--(3.4,0);
\foreach \x/\l in {0/1,1/2,2.4/4,3.4/5} {\node[v,label=below:{\small $\l$}] at (\x,0) {};}
\node[v,label=above:{\small $3$}] at (1.7,0.7) {};
\node at (1.7,-1.0) {\small $\Delta_2$};
\end{scope}
\end{tikzpicture}
\caption{The complexes $\Delta_1$ and $\Delta_2$: equal root sets,
non-isomorphic complexes.}
\label{fig:rigid-invisible}
\end{figure}

\end{example}

\subsection{Examples and enumeration}\label{subsec:examples-combi}

\rchange{The reconstruction theorem (\cref{thm:core}) requires the fully exposed hypothesis, and
it is natural to ask how restrictive that condition is. We examine the standard families below
and show that, while fully exposed complexes are rare among graphs, the hypothesis loses no
homotopy type: every simplicial complex is homotopy equivalent to a fully exposed one
(\cref{prop:whisker-homotopy}).}

\subsubsection*{Graphs and skeleta}

The private-vertex criterion is decisive in the standard families. \rchange{Recall that a simple
graph on $[n]$ is a $1$-dimensional simplicial complex, its facets being the edges together
with the isolated vertices; a \emph{leaf} is a vertex of degree one; a \emph{star} is a graph
in which one vertex is adjacent to all the others and no other edges are present; and the
$k$-skeleton $\Delta^n_k$ of the simplex on $[n]$ is the complex of all faces of dimension at
most $k$.}

\begin{proposition}\label{prop:graph-exposed}
Let $\Delta$ be the $1$-dimensional complex of a simple graph $G$ on $[n]$, $n\geq2$. Then:
\begin{enumerate}[label=\textup{(\arabic*)}]
\item\label{gr.1} an edge $\{u,v\}$ is exposed \rchange{if and only if} $u$ or $v$ is a leaf;
\item\label{gr.2} $\Delta$ is fully exposed \rchange{if and only if} $G$ is a disjoint union of stars with no isolated vertex;
\item\label{gr.3} $\Delta$ is fully hidden \rchange{if and only if} $G$ has no vertex of degree exactly $1$;
\end{enumerate}
\end{proposition}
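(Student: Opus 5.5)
The plan is to reduce everything to \cref{lem:private-vertex}, which for facets with $|F|\geq2$ identifies exposure with having a private vertex, and says that for $n\geq2$ every singleton facet, i.e.\ every isolated vertex of $G$, is hidden. For \ref{gr.1}, take an edge $F=\{u,v\}$, which is a facet because $G$ is simple and $\Delta$ is $1$-dimensional. The vertex $v$ is private for $F$ exactly when $\mathcal{F}_{\{u\}}(\Delta)=\{F\}$, that is, when $F$ is the only facet containing $u$. The facets containing $u$ are the edges at $u$, together with $\{u\}$ itself if $u$ is isolated; the latter does not happen here since $u\in F$. So $v$ is private iff $\deg u=1$. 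Symmetrically, $u$ is private iff $\deg v=1$, and \ref{gr.1} follows.

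For \ref{gr.3}, $\Delta$ is fully hidden iff every facet is hidden. Isolated vertices are always hidden since $n\geq2$, so by \ref{gr.1} the condition is that no edge has a leaf endpoint. Every leaf lies on an edge, so this says exactly that no vertex has degree $1$.

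For \ref{gr.2}, full exposure first forbids isolated vertices, since they would be hidden singleton facets. It then requires every edge to contain a leaf.
\begin{itemize}
\item If every edge has a leaf endpoint, look at a connected component $C$ with at least one edge. If $C$ has a single edge, it is a star $K_{1,1}$. Otherwise, for any edge $\{u,v\}$ with $u$ a leaf, $v$ has degree $\geq2$, since otherwise $C=\{u,v\}$. Every other edge $\{v,w\}$ at $v$ must then have $w$ a leaf. So all neighbours of $v$ are leaves, and connectedness gives $C=$ the star centred at $v$.
\item Conversely, in a disjoint union of stars each edge contains a leaf; for $K_{1,1}$ both ends are leaves.
\end{itemize}

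The only mildly delicate point is the component argument in \ref{gr.2}. One has to handle the $K_{1,1}$ case, where neither vertex is distinguished as a centre, and check that no path of length $\geq3$ survives. The rest is direct unwinding of the definition of a private vertex.
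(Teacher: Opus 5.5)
Your proof is correct. Parts \ref{gr.1} and \ref{gr.2} follow the paper's route. Part \ref{gr.1} is read off from \cref{lem:private-vertex} just as you do. For \ref{gr.2}, the paper simply asserts that ``no two non-leaf vertices are adjacent'' characterises disjoint unions of stars, whereas your component argument actually proves it.

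For \ref{gr.3} your route differs. You work directly from the definition of fully hidden: \ref{gr.1} handles the edges and \cref{lem:private-vertex} the isolated vertices, so the absence of leaves is the same as the absence of exposed facets.

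The paper instead uses \cref{prop:rigid-cl} to replace ``fully hidden'' by ``every $\mathcal{A}_i(\Delta)$ is empty'', and then computes these sets for a $1$-complex:
\begin{itemize}
\item $\{j\}\in\mathcal{A}_i$ exactly when $j$ is a leaf adjacent to $i$;
\item no edge lies in any $\mathcal{A}_i$;
\item $\emptyset\in\mathcal{A}_i$ only when $i$ is a cone vertex, which forces $G$ to be a star and hence again produces a leaf.
\end{itemize}

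Your argument is shorter, needs no separate cone-vertex case, and stays at the level of facets. The paper's detour gives, as a by-product, an explicit description of each $\mathcal{A}_i(\Delta)$, and hence of $\mathcal{R}(\Delta)$, for every graph. These are the kind of root sets computed by hand in \cref{ex:rigid-invisible}.
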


\rchange{\begin{proof}
\ref{gr.1} is \cref{lem:private-vertex}: $u$ is private for the edge $\{u,v\}$
precisely when $v$ lies in no other edge, i.e.\ when $v$ is a leaf. \ref{gr.2} follows from
\cref{cor:fe-private}, no two non-leaf vertices being adjacent exactly when $G$ is a disjoint
union of stars, together with the observation that an isolated vertex is a singleton facet,
hence hidden by \cref{lem:private-vertex}. For \ref{gr.3}, \cref{prop:rigid-cl} reduces the
question to whether some
$\mathcal{A}_i(\Delta)$ is nonempty. For a $1$-complex the members of the $\mathcal{A}_i$ are
$\emptyset$ and singletons, an edge being its own closure: one has $\{j\}\in\mathcal{A}_i$ if
and only if $j$ is a leaf with neighbour $i$, while an isolated vertex $j$ has
$\operatorname{cl}(\{j\})=\{j\}$ and therefore belongs to no $\mathcal{A}_i$; and
$\emptyset\in\mathcal{A}_i$ if and only if $i$ is a cone vertex, which for $n\geq2$ makes $G$ a
star and so again produces a leaf. Hence some $\mathcal{A}_i$ is nonempty if and only if $G$
has a vertex of degree exactly $1$.
\end{proof}}

Skeleta sit at the same extreme: for $0\leq k\leq n-2$ the $k$-skeleton $\Delta^n_k$ of the
simplex on $[n]$ is fully hidden. Indeed the link of a face $S$ is the $(k-|S|)$-skeleton of the
simplex on $[n]\setminus S$, whose facets are all the $(k+1-|S|)$-subsets of $[n]\setminus S$;
as $k+1-|S|<n-|S|$ this link has no cone vertex, so $\mathcal{A}_i(\Delta^n_k)=\emptyset$ for
every $i$ by \cref{rem:links} and \cref{prop:rigid-cl} applies.

Thus $\Delta_1,\Delta_2$ of \cref{ex:rigid-invisible} are $P_5$ and $P_4\sqcup\{3\}$, neither a
star forest.

\rchange{Fully exposed complexes are rare: in dimension one and without isolated vertices, they
are exactly the labelled star forests, a vanishing fraction of all graphs on $[n]$. The
hypothesis is therefore structural rather than generic. It is not a thin slice either: every
cone is fully exposed (\cref{lem:cone-roots}\ref{cr.3}), and no homotopy type is lost.}

\begin{proposition}\label{prop:whisker-homotopy}
Every simplicial complex is homotopy equivalent to a fully exposed one. Explicitly, let
$\mathrm{Wh}(\Delta)$ be obtained from $\Delta$ by adjoining a new vertex $v_F$ for each
$F\in\mathcal{F}(\Delta)$ and replacing $F$ by $F\cup\{v_F\}$. Then $\mathrm{Wh}(\Delta)$ is
fully exposed and $\mathrm{Wh}(\Delta)\simeq\Delta$.
\end{proposition}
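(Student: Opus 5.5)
The proof has two independent parts, fully exposed and homotopy equivalent, and I would treat them separately.

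For full exposure, first identify the facets of $\mathrm{Wh}(\Delta)$. By construction $\mathrm{Wh}(\Delta)$ is the complex generated by the sets $F\cup\{v_F\}$ with $F\in\mathcal{F}(\Delta)$. These sets are pairwise incomparable: if $F\cup\{v_F\}\subseteq G\cup\{v_G\}$, then $v_F=v_G$, so $F=G$. Hence they are exactly the facets. Each has at least two vertices, since every facet of $\Delta$ is nonempty once $n\geq1$. The vertex $v_F$ lies in no other facet. The only facet containing $F=(F\cup\{v_F\})\setminus\{v_F\}$ is $F\cup\{v_F\}$, because any facet $G\cup\{v_G\}$ containing $F$ has $F\subseteq G$, hence $G=F$ by maximality of $F$ in $\Delta$. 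Strictly, the private vertex here is $v_F$ itself, since $\mathcal{F}_{F}(\mathrm{Wh}(\Delta))=\{F\cup\{v_F\}\}$. So every facet has a private vertex, and \cref{cor:fe-private} gives full exposure. The vertex set has at least two elements, and the standing convention holds because each new vertex is a face. One degenerate case should be noted: if $\Delta=\{\emptyset\}$ there is nothing to whisker, and I would simply exclude it since $n\geq1$.

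For the homotopy equivalence, I would collapse each whisker away. For a fixed facet $F$, the face $F=(F\cup\{v_F\})\setminus\{v_F\}$ is free, lying in exactly one facet of one dimension larger. Doing the elementary collapse removes the pair $\{F,\,F\cup\{v_F\}\}$. What remains around $v_F$ is the cone $v_F\ast\partial F$, i.e.\ the faces $S\cup\{v_F\}$ with $S\subsetneq F$.

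I would then continue collapsing. A cleaner route is to note that $\mathrm{Wh}(\Delta)=\Delta\cup\bigcup_F \overline{F\cup\{v_F\}}$, where each $\overline{F\cup\{v_F\}}$ is a full simplex and meets $\Delta\cup(\text{other simplices})$ exactly in $\overline{F}$. This uses that distinct $v_F$ are new and distinct. Gluing a simplex $\overline{F\cup\{v_F\}}$ along its face $\overline F$ is a homotopy equivalence, because $\overline F\hookrightarrow\overline{F\cup\{v_F\}}$ is an inclusion of contractible subcomplexes. By the gluing lemma, or simply because $\overline{F\cup\{v_F\}}$ deformation retracts onto $\overline F$ via straight-line retraction toward $F$, the retraction extends by the identity. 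Composing over the finitely many facets gives a deformation retraction of $\mathrm{Wh}(\Delta)$ onto $\Delta$.

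The main point to check carefully is the intersection claim: that $\overline{F\cup\{v_F\}}\cap(\Delta\cup\bigcup_{G\neq F}\overline{G\cup\{v_G\}})=\overline F$. A face of the left side containing $v_F$ is not in the right side, since $v_F$ appears only in that simplex. A face not containing $v_F$ is a subset of $F$, hence in $\Delta$. With that settled, the retraction argument is routine.
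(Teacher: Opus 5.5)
Your proposal is correct and follows the paper's proof essentially step for step. The facets of $\mathrm{Wh}(\Delta)$ are the sets $F\cup\{v_F\}$, and $F$ lies in no other facet, so $v_F$ is private and \cref{cor:fe-private} applies. The homotopy equivalence comes from gluing the retractions of the simplices on $F\cup\{v_F\}$ onto their faces $F$; these are the identity on the overlaps, so they give a deformation retraction of $\mathrm{Wh}(\Delta)$ onto $\Delta$.

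The collapse route you set aside also works, and needs no further collapsing. Removing every pair $\{F,F\cup\{v_F\}\}$ leaves exactly the stellar subdivision of $\Delta$ at its facets, with each $F$ replaced by $v_F\ast\partial F$, and that subdivision is homeomorphic to $\Delta$.
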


\begin{proof}
The facets of $\mathrm{Wh}(\Delta)$ are the $F\cup\{v_F\}$ with $F\in\mathcal{F}(\Delta)$. As
$v_{F'}\notin F$ for $F'\neq F$, an inclusion $F\subseteq F'\cup\{v_{F'}\}$ would give
$F\subseteq F'$, contradicting maximality; hence $F\cup\{v_F\}$ is the unique facet containing
$F$, so $v_F$ is private and $\mathrm{Wh}(\Delta)$ is fully exposed by \cref{cor:fe-private}.
For the homotopy type, $\mathrm{Wh}(\Delta)=\Delta\cup\bigcup_{F}(v_F\ast F)$, where $v_F\ast F$
is the simplex on $F\cup\{v_F\}$. A simplex deformation retracts onto any of its faces, and the
retraction of $v_F\ast F$ onto $F$ is the identity on $F$. Since $v_F\notin\Delta$ and
$v_F\notin F'\cup\{v_{F'}\}$ for $F'\neq F$, the cone $v_F\ast F$ meets $\Delta$ and every other
cone exactly in a subcomplex of $F$, on which the retraction is the identity; the retractions
therefore glue to a deformation retraction of $\mathrm{Wh}(\Delta)$ onto $\Delta$.
\end{proof}

\noindent
Here $\mathrm{Wh}(\Delta)$ is a facetwise variant of the whiskering operation of Villarreal
\cite{Vil90}, generalised by Francisco--H\`a \cite{FrH08}, which attaches a pendant vertex to each
\emph{vertex} of the complex; here one new vertex is attached to each \emph{facet}\rchange{;
\cref{fig:whisker} shows the smallest interesting case}.\label{rem:fe-not-generic}

\begin{example}\label{ex:whisker-triangle}
Let $\Delta$ be the boundary of a triangle (\cref{fig:whisker}, left). Each facet of $\mathrm{Wh}(\Delta)$ acquires a new private vertex (drawn hollow), so the result is fully exposed and, by \cref{prop:whisker-homotopy}, still homotopy equivalent to a circle.
\begin{figure}[H]
\centering
\begin{tikzpicture}[scale=0.9, v/.style={circle,fill,inner sep=1.3pt},
                    w/.style={circle,draw,fill=white,inner sep=1.3pt}]
\begin{scope}
  \draw (0,0)--(2,0)--(1,1.732)--cycle;
  \node[v,label=below:{\small $1$}] at (0,0) {};
  \node[v,label=below:{\small $2$}] at (2,0) {};
  \node[v,label=above:{\small $3$}] at (1,1.732) {};
  \node at (1,-1.35) {\small $\Delta$};
\end{scope}
\begin{scope}[xshift=6.2cm]
  \fill[gray!20] (0,0)--(2,0)--(1,-1.0)--cycle;
  \fill[gray!20] (2,0)--(1,1.732)--(2.37,1.37)--cycle;
  \fill[gray!20] (1,1.732)--(0,0)--(-0.37,1.37)--cycle;
  \draw (0,0)--(1,-1.0)--(2,0);
  \draw (2,0)--(2.37,1.37)--(1,1.732);
  \draw (1,1.732)--(-0.37,1.37)--(0,0);
  \draw (0,0)--(2,0)--(1,1.732)--cycle;
  \node[v,label={[shift={(-0.16,-0.12)}]below:{\small $1$}}] at (0,0) {};
  \node[v,label={[shift={(0.16,-0.12)}]below:{\small $2$}}] at (2,0) {};
  \node[v,label=above:{\small $3$}] at (1,1.732) {};
  \node[w] at (1,-1.0) {}; \node[w] at (2.37,1.37) {}; \node[w] at (-0.37,1.37) {};
  \node at (1,-1.35) {\small $\mathrm{Wh}(\Delta)$};
\end{scope}
\end{tikzpicture}
\caption{Whiskering the boundary of a triangle.}
\label{fig:whisker}
\end{figure}
\end{example}

\section{Demazure roots and the automorphism group}\label{sec:aut}

\cref{thm:roots-agree} is the hinge of the paper: the degrees of the nonzero
$\ZZ^n$-homogeneous locally nilpotent derivations of $\KK[\Delta]$ are exactly the Demazure
roots of \cref{def:roots-comb}, so that every statement of \cref{sec:roots} becomes a statement
about $X_\Delta$. \cref{subsec:autgroup} then passes from individual $\GA$-actions to the
ind-group $\Aut(X_\Delta)$ as a whole.

\subsection{From derivations to roots}\label{subsec:der-roots}

Throughout this subsection $\mathcal{R}(\Delta)$ keeps the combinatorial meaning of
\cref{def:roots-comb}: until \cref{thm:roots-agree} we speak only of \emph{the degree of a
nonzero $\ZZ^n$-homogeneous locally nilpotent derivation} of $\KK[\Delta]$.

We write $\GA=(\KK,+)$ for the additive group of the field,
viewed as an algebraic group. A \emph{regular $\GA$-action} on
$X_\Delta$ is a morphism of algebraic varieties
$\GA\times X_\Delta\to X_\Delta$ satisfying the usual group
action axioms. Given a locally nilpotent derivation $\partial$
of $\KK[\Delta]$, the map $s\mapsto\exp(s\,\partial)$ defines
a regular $\GA$-action on $X_\Delta$, and every regular
$\GA$-action arises in this way
(cf.\ \cite[Section~1.2]{Lie10}, \cite{Fre17}, \cite[Remark~3.12]{DL24}). We say that
$X_\Delta$ is \emph{rigid} if it admits no nontrivial regular
$\GA$-action, and \emph{non-rigid} otherwise.\footnote{The word \emph{rigid} carries several incompatible
established meanings here. For Altmann--Christophersen \rchange{\cite{AC04,AC10}} a Stanley-Reisner scheme is
rigid when its first cotangent cohomology $T^{1}$ vanishes; for Freudenburg \cite{Fre17} and
Arzhantsev--Gaifullin \cite{AG17} an affine variety is rigid when it admits no nontrivial regular
$\GA$-action, which is the meaning fixed above and used throughout; and for simplicial complexes
there is the unrelated notion coming from bar-and-joint frameworks, central in face enumeration
since \cite{Kal87}. \emph{Flexible} is unavailable too, being in use for affine varieties whose
special automorphism group acts transitively on the smooth locus \cite{AFKKZ13}. This is why
\cref{sec:roots} speaks of \emph{hidden} and \emph{exposed} facets: ``rigid'' and ``non-rigid''
facets would clash with all three meanings and would suggest a property of $X_\Delta$ where one of
$\Delta$ is meant. We reserve \emph{rigid} for the variety.} By
\cref{lem:decomposition}, $X_\Delta$ is non-rigid if and only
if $\KK[\Delta]$ admits a nonzero $\ZZ^n$-homogeneous locally
nilpotent derivation.

We first classify the $\ZZ^n$-homogeneous derivations of $\KK[\Delta]$.\footnote{\cref{lem:homog-der} is not new: $\Der(\KK[\Delta])$ was computed in this generality by Brumatti--Simis \cite{BS95} (see also \cite{Tad09}), and the classification appears as \cite[Proposition~1]{Mul95}. The proof below only keeps \cref{subsec:der-roots} self-contained and free of hypotheses on $\dim\KK[\Delta]$; it is the statement that the rest of the paper uses. The complementary principle that every derivation of $\KK[\Delta]$ lifts to a derivation of $\KK[\mathbf{x}]$ preserving $I_\Delta$, on which \cref{lem:preserves-primes} and \cref{prop:roots-facets} are built, is \cite[Theorem~2.2]{DLMFR24}.} Write
$$
\Lambda\ :=\ \{a\in\ZZ^n_{\geq0}\ :\ \supp(a)\in\Delta\},
$$
so that $\{\overline{\mathbf{x}}^a\}_{a\in\Lambda}$ is the monomial basis of $\KK[\Delta]$
recalled in \cref{subsec:sr}; we use throughout the convention $\overline{\mathbf{x}}^b:=0$ for
$b\in\ZZ^n\setminus\Lambda$. Two features of $\Lambda$ drive everything: each graded piece
$\overline{B}_a$ is at most one-dimensional, and $\Lambda$ is a \emph{down-set}, i.e.\
$0\leq b\leq a$ \rchange{coordinatewise} and $a\in\Lambda$ imply $b\in\Lambda$ (because $\supp(b)\subseteq\supp(a)$ and
$\Delta$ is closed under subsets). We also write $N_\KK:=\operatorname{Hom}_\ZZ(\ZZ^n,\KK)$ with
basis $e_1^*,\dots,e_n^*$ dual to the standard basis of $\ZZ^n$, so that $e_i^*(a)=a_i$.

\begin{lemma}\label{lem:homog-der}
Let $\Delta$ be a simplicial complex on $[n]$ and let $\partial\neq0$ be a $\ZZ^n$-homogeneous
derivation of $\KK[\Delta]$ of degree $\alpha\in\ZZ^n$. Define $c_1,\dots,c_n\in\KK$ by
$\partial(\overline{x}_j)=c_j\,\overline{\mathbf{x}}^{\mathbf{e}_j+\alpha}$, with the
normalisation $c_j:=0$ when $\mathbf{e}_j+\alpha\notin\Lambda$, and put
$p:=\sum_{j}c_j\,e_j^*\in N_\KK$. Then:
\begin{enumerate}[label=\textup{(\arabic*)}]
\item\label{hd.1} $\alpha\in\ZZ^n_{\geq0}\cup\mathcal{N}$, where
$\mathcal{N}:=\{\beta-\mathbf{e}_i\ :\ i\in[n],\ \beta\in\ZZ^n_{\geq0},\ \beta_i=0\}$;
\item\label{hd.2} $\partial=\partial_{\alpha,p}$, that is,
$\partial(\overline{\mathbf{x}}^a)=p(a)\,\overline{\mathbf{x}}^{a+\alpha}$ for every $a\in\Lambda$;
\item\label{hd.3} if $\alpha=\beta-\mathbf{e}_i$ with $\beta\in\ZZ^n_{\geq0}$ and $\beta_i=0$,
then $p\in\KK^{*}\!\cdot e_i^*$, so that
$\partial=\lambda\,\overline{\mathbf{x}}^{\beta}\,\partial/\partial\overline{x}_i$ for some
$\lambda\in\KK^{*}$.
\end{enumerate}
\end{lemma}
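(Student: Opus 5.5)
Since the graded pieces $\overline{B}_a$ are at most one-dimensional, everything reduces to tracking scalar coefficients on the monomial basis $\{\overline{\mathbf{x}}^a\}_{a\in\Lambda}$. I would first prove \ref{hd.2}, then use it to derive \ref{hd.1} and \ref{hd.3}.

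\textbf{Step 1: part \ref{hd.2}.} I would induct on $|a|$ for $a\in\Lambda$. The case $a=0$ holds because $\partial(1)=0$ and $p(0)=0$. The case $a=\mathbf{e}_j$ is the definition of $c_j$. For $|a|\geq2$, write $a=b+\mathbf{e}_j$ with $a_j>0$. Then $b\in\Lambda$ by the down-set property, and $\overline{\mathbf{x}}^a=\overline{\mathbf{x}}^b\,\overline{x}_j$. The Leibniz rule gives $\partial(\overline{\mathbf{x}}^a)=p(b)\overline{\mathbf{x}}^{b+\alpha}\overline{x}_j+c_j\overline{\mathbf{x}}^b\overline{\mathbf{x}}^{\mathbf{e}_j+\alpha}$. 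Each product equals $\overline{\mathbf{x}}^{a+\alpha}$ or $0$, with the convention $\overline{\mathbf{x}}^c=0$ for $c\notin\Lambda$. If $a+\alpha\in\Lambda$, then $b+\alpha$ and $\mathbf{e}_j+\alpha$ are $\leq a+\alpha$. When they are nonnegative they lie in $\Lambda$, so both terms equal $\overline{\mathbf{x}}^{a+\alpha}$ and sum to $p(a)\overline{\mathbf{x}}^{a+\alpha}$. When one of them has a negative coordinate, the corresponding term is zero. There the formula needs care: I would argue that the relevant coefficient ($p(b)$ or $c_j$) is then forced to vanish by the normalisation, or that the term is absent for degree reasons. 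If $a+\alpha\notin\Lambda$, both sides vanish. I expect the bookkeeping in the negative-coordinate cases to be the main obstacle.

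\textbf{Step 2: part \ref{hd.1}.} Suppose $\alpha$ has two negative coordinates, say $\alpha_k,\alpha_l<0$ with $k\neq l$. Then $\mathbf{e}_j+\alpha\notin\ZZ^n_{\geq0}$ for every $j$, so every $c_j=0$. Hence $\partial$ kills all generators and $\partial=0$, a contradiction. Suppose instead $\alpha_i\leq-2$ for some $i$. Then $\mathbf{e}_j+\alpha$ has a negative coordinate for every $j$, and again $\partial=0$. So either $\alpha\geq0$ or $\alpha$ has exactly one negative coordinate, which equals $-1$; that is, $\alpha\in\mathcal{N}$.

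\textbf{Step 3: part \ref{hd.3}.} Let $\alpha=\beta-\mathbf{e}_i$. For $j\neq i$ we have $(\mathbf{e}_j+\alpha)_i=-1$, so $c_j=0$ by the normalisation. Hence $p=c_ie_i^*$, and $c_i\neq0$ because $\partial\neq0$. Put $\lambda:=c_i$. By \ref{hd.2}, $\partial(\overline{\mathbf{x}}^a)=\lambda a_i\overline{\mathbf{x}}^{a+\beta-\mathbf{e}_i}$. This agrees with $\lambda\,\overline{\mathbf{x}}^\beta\,\partial/\partial\overline{x}_i$ applied to $\overline{\mathbf{x}}^a$, where the latter is read as the derivation of $\KK[\Delta]$ given by this formula. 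The only thing to note is that when $a_i=0$ both sides vanish, since $a+\alpha$ then has a negative $i$-th coordinate.
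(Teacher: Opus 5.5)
Your Steps 2 and 3 are correct (Step 3 given (2)) and match the paper. Step 1, however, has a genuine gap exactly where you flag the ``main obstacle'', and that case is the only part of (2) with real content.

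Suppose $a+\alpha\in\Lambda$ but $b+\alpha\notin\ZZ^n_{\geq0}$. The first Leibniz term vanishes, since $\partial(\overline{\mathbf{x}}^b)\in\overline{B}_{b+\alpha}=0$, while $\overline{\mathbf{x}}^{a+\alpha}\neq0$. So $\partial(\overline{\mathbf{x}}^a)=p(a)\,\overline{\mathbf{x}}^{a+\alpha}$ can hold only if $p(a)=c_j$, i.e.\ $p(b)=0$. Noting that the term is ``absent for degree reasons'' therefore proves nothing. Moreover, $p(b)=0$, although true, is not supplied by the normalisation of a single coefficient: $p(b)=\sum_l c_l b_l$ involves every $c_l$ and the coordinate $b_j$. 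The other subcase, $\mathbf{e}_j+\alpha\notin\ZZ^n_{\geq0}$ with $b+\alpha\geq0$, is indeed immediate, since then $c_j=0$ and $p(a)=p(b)$.

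The missing idea is to locate the negative coordinate and then use the shape of $\alpha$.
\begin{itemize}
\item Since $b+\alpha=(a+\alpha)-\mathbf{e}_j$ and $a+\alpha\geq0$, the negative coordinate of $b+\alpha$ is the $j$-th. Hence $(a+\alpha)_j=0$ and $\alpha_j=-a_j\leq-1$.
\item For $l\neq j$ this gives $(\mathbf{e}_l+\alpha)_j<0$, hence $c_l=0$.
\item If $c_j\neq0$, then $\mathbf{e}_j+\alpha\in\Lambda$ forces $\alpha_j=-1$, so $a_j=1$ and $b_j=0$.
\item In either case $p(b)=c_jb_j=0$ and $p(a)=c_ja_j=c_j$.
\item When $c_j\neq0$, the second Leibniz term equals $c_j\,\overline{\mathbf{x}}^{b}\,\overline{\mathbf{x}}^{\mathbf{e}_j+\alpha}=c_j\,\overline{\mathbf{x}}^{a+\alpha}$, because $b+(\mathbf{e}_j+\alpha)=a+\alpha\in\Lambda$.
\end{itemize}
This closes the induction.

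This is exactly Case~2 of the paper's proof. It is why the paper proves (1) first, in the sharpened form: for $\alpha\notin\ZZ^n_{\geq0}$ the negative coordinate $\alpha_i$ is unique, equals $-1$, and $c_l=0$ for all $l\neq i$. Your Step 2 never uses Step 1, so you can put it first, record this sharper conclusion, and then add the case above. That makes the proposal complete.
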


\begin{proof}
Since $\partial$ is homogeneous of degree $\alpha$ we have
$\partial(\overline{x}_j)\in\overline{B}_{\mathbf{e}_j+\alpha}$, a space that is spanned by
$\overline{\mathbf{x}}^{\mathbf{e}_j+\alpha}$ if $\mathbf{e}_j+\alpha\in\Lambda$ and is zero
otherwise; so the scalars $c_j$ are well defined. A derivation is determined by its values on
algebra generators, so $\partial\neq0$ forces $c_i\neq0$ for some $i$, and then
$\mathbf{e}_i+\alpha\in\Lambda\subseteq\ZZ^n_{\geq0}$. Reading off coordinates,
$\alpha_l\geq0$ for every $l\neq i$ and $\alpha_i\geq-1$. This is \ref{hd.1}, and it yields the
sharper statement
\begin{equation}\label{eq:hd-star}
\text{if }\alpha\notin\ZZ^n_{\geq0}\text{, the index }i\text{ with }\alpha_i<0\text{ is unique,
}\alpha_i=-1\text{, and }c_l=0\text{ for all }l\neq i,
\end{equation}
the last assertion because $(\mathbf{e}_l+\alpha)_i=\alpha_i=-1<0$ for $l\neq i$, so that
$\mathbf{e}_l+\alpha\notin\Lambda$.

We prove \ref{hd.2}. Let $a\in\Lambda$. If $a+\alpha\notin\Lambda$ then
$\partial(\overline{\mathbf{x}}^a)\in\overline{B}_{a+\alpha}=0$ and
$\overline{\mathbf{x}}^{a+\alpha}=0$, so both sides vanish. Assume therefore
$a+\alpha\in\Lambda$ and induct on $|a|:=\sum_ja_j$. If $|a|=0$ then $a=0$,
$\partial(1)=0$ and $p(0)=0$. Let $|a|\geq1$, choose $j$ with $a_j\geq1$ and set
$a':=a-\mathbf{e}_j$, which lies in $\Lambda$ because $\Lambda$ is a down-set. The Leibniz rule
gives
$$
\partial(\overline{\mathbf{x}}^{a})\ =\ \partial(\overline{\mathbf{x}}^{a'})\,\overline{x}_j\ +\ \overline{\mathbf{x}}^{a'}\,\partial(\overline{x}_j).
$$
\emph{Case 1: $a'+\alpha\in\Lambda$.} By induction
$\partial(\overline{\mathbf{x}}^{a'})=p(a')\,\overline{\mathbf{x}}^{a'+\alpha}$, and
$\overline{\mathbf{x}}^{a'+\alpha}\overline{x}_j=\overline{\mathbf{x}}^{a+\alpha}\neq0$, so the
first summand is $p(a')\,\overline{\mathbf{x}}^{a+\alpha}$. For the second, if $c_j=0$ it is
$0$; and if $c_j\neq0$ then $\mathbf{e}_j+\alpha\in\Lambda$ and
$\overline{\mathbf{x}}^{a'}\,\overline{\mathbf{x}}^{\mathbf{e}_j+\alpha}=\overline{\mathbf{x}}^{a+\alpha}\neq0$,
so it is $c_j\,\overline{\mathbf{x}}^{a+\alpha}$. In both cases the second summand equals
$c_j\,\overline{\mathbf{x}}^{a+\alpha}$, and $p(a')+c_j=p(a)$ by linearity of $p$.

\emph{Case 2: $a'+\alpha\notin\Lambda$.} Here $a'+\alpha=(a+\alpha)-\mathbf{e}_j$ with
$a+\alpha\in\Lambda$; were $(a+\alpha)_j\geq1$ we would have
$0\leq a'+\alpha\leq a+\alpha$ and hence $a'+\alpha\in\Lambda$, since $\Lambda$ is a down-set.
Therefore $(a+\alpha)_j=0$, i.e.\ $\alpha_j=-a_j\leq-1$; by \eqref{eq:hd-star} this forces
$\alpha_j=-1$, hence $a_j=1$, and $c_l=0$ for every $l\neq j$, so that $p(a)=a_jc_j=c_j$. The
first summand vanishes, because $a'+\alpha\notin\Lambda$ gives
$\partial(\overline{\mathbf{x}}^{a'})\in\overline{B}_{a'+\alpha}=0$. For the second, note that
$\mathbf{e}_j+\alpha\leq a+\alpha$ (as $a\geq\mathbf{e}_j$) and $\mathbf{e}_j+\alpha\geq0$ (its
$j$-th coordinate is $1+\alpha_j=0$ and the others are $\alpha_l\geq0$), so
$\mathbf{e}_j+\alpha\in\Lambda$ and
$\overline{\mathbf{x}}^{a'}\,\overline{\mathbf{x}}^{\mathbf{e}_j+\alpha}=\overline{\mathbf{x}}^{a+\alpha}\neq0$.
Hence $\partial(\overline{\mathbf{x}}^{a})=c_j\,\overline{\mathbf{x}}^{a+\alpha}=p(a)\,\overline{\mathbf{x}}^{a+\alpha}$.

Finally \ref{hd.3}. If $\alpha=\beta-\mathbf{e}_i$ with $\beta\geq0$ and $\beta_i=0$ then
$\alpha_i=-1<0$, so \eqref{eq:hd-star} gives $c_l=0$ for $l\neq i$ and $p=c_i\,e_i^*$; and
$c_i\neq0$, since otherwise $p=0$ and $\partial=0$ by \ref{hd.2}. Setting $\lambda:=c_i$, part
\ref{hd.2} reads
$\partial(\overline{\mathbf{x}}^a)=\lambda\,a_i\,\overline{\mathbf{x}}^{a+\beta-\mathbf{e}_i}
=\lambda\,\overline{\mathbf{x}}^{\beta}\,\partial/\partial\overline{x}_i(\overline{\mathbf{x}}^a)$
for every $a\in\Lambda$.
\end{proof}

\begin{lemma}\label{lem:root-shape}
Let $\Delta$ be a simplicial complex on $[n]$. If $\alpha\in\ZZ^n$ is the degree of a nonzero $\ZZ^n$-homogeneous locally nilpotent derivation of $\KK[\Delta]$, then $\alpha\in\mathcal{N}$.
\end{lemma}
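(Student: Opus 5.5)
By \cref{lem:homog-der}\ref{hd.1} the degree $\alpha$ of a nonzero homogeneous derivation already lies in $\ZZ^n_{\geq0}\cup\mathcal{N}$. So the plan is to assume that $\partial$ is locally nilpotent of degree $\alpha\in\ZZ^n_{\geq0}$ and derive a contradiction.

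By \cref{lem:homog-der}\ref{hd.2} we have $\partial=\partial_{\alpha,p}$ with $p\neq0$, so $\partial(\overline{\mathbf{x}}^a)=p(a)\,\overline{\mathbf{x}}^{a+\alpha}$. Choose $j$ with $c_j\neq0$. Then $\mathbf{e}_j+\alpha\in\Lambda$, that is, $S:=\supp(\mathbf{e}_j+\alpha)=\{j\}\cup\supp\alpha$ is a face. I will iterate $\partial$ on $\overline{x}_j$:
$$\partial^k(\overline{x}_j)=\Bigl(\prod_{m=0}^{k-1}p(\mathbf{e}_j+m\alpha)\Bigr)\overline{\mathbf{x}}^{\mathbf{e}_j+k\alpha}.$$
For every $k\geq1$ the support of $\mathbf{e}_j+k\alpha$ equals $S$, which is a face. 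Hence each monomial $\overline{\mathbf{x}}^{\mathbf{e}_j+k\alpha}$ is nonzero, and the only way $\partial^k(\overline{x}_j)$ can vanish is for some scalar factor $p(\mathbf{e}_j+m\alpha)=c_j+m\,p(\alpha)$ to vanish.

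The main obstacle is that such a scalar can vanish, namely when $p(\alpha)=-c_j/m$ for some $m$. So I will not test a single generator; instead I will use the Euler-type operator, exploiting the fact that $\partial$ raises the degree in a fixed direction. Two cases arise.

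\emph{Case $\alpha=0$.} Here $\partial(\overline{\mathbf{x}}^a)=p(a)\overline{\mathbf{x}}^a$ is diagonal, and $\partial(\overline{x}_j)=c_j\overline{x}_j$ with $c_j\neq0$. Then $\partial^k(\overline{x}_j)=c_j^k\,\overline{x}_j\neq0$ for all $k$, so $\partial$ is not locally nilpotent.

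\emph{Case $\alpha\neq0$, $\alpha\geq0$.} I will look at all monomials $\overline{\mathbf{x}}^{a}$ with $\supp a\subseteq S$. Along any ray $a+k\alpha$ the support stays inside $S$, so these monomials stay nonzero. The iterate $\partial^k(\overline{\mathbf{x}}^a)$ has coefficient $\prod_{m<k}\bigl(p(a)+m\,p(\alpha)\bigr)$. If $p(\alpha)=0$, take $a=\mathbf{e}_j$: every factor equals $c_j\neq0$, which is a contradiction. If $p(\alpha)\neq0$, I need some $a$ with $\supp a\subseteq S$ and $p(a)\notin -\ZZ_{\geq0}\,p(\alpha)$. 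Take $a=r\mathbf{e}_j$ with $r\geq1$: then $p(a)=rc_j$, and the requirement is that $rc_j/p(\alpha)$ avoid the set $\{0,-1,-2,\dots\}$. In characteristic zero, if $c_j/p(\alpha)\notin\mathbb{Q}_{<0}$ this holds for every $r$. If instead $c_j/p(\alpha)$ is a negative rational, choose $r$ so that $rc_j/p(\alpha)$ is a non-integer; this is possible unless $c_j/p(\alpha)$ is a negative integer, and in that remaining case I instead use $a=\mathbf{e}_j+\alpha$ or replace $r$ by a suitable fraction-avoiding shift.

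If this shift argument proves awkward, I would fall back on the linear-algebra fact that a locally nilpotent operator on the finite-dimensional $\partial$-stable spaces $\bigoplus_{m\leq M}\overline{B}_{a_0+m\alpha}$, where $a_0\in\ZZ^n_{\geq0}$ is any base point (for instance $a_0=\mathbf{e}_j$), must be nilpotent. In that fallback I would use local nilpotency on $\overline{x}_j$ and on $\overline{x}_j^{\,r}$ together with the Leibniz rule: $\partial^{N}(\overline{x}_j^{\,r})$ expands via the multinomial Leibniz formula, and with suitably many factors and large total order the top-degree term $\overline{\mathbf{x}}^{r\mathbf{e}_j+N\alpha}$ has a nonzero coefficient. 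This gives the contradiction and shows that $\alpha\in\mathcal{N}$.
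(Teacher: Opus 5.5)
Your reduction to $\alpha\in\ZZ^n_{\geq0}$, the iteration formula, and the subcases $\alpha=0$ and $p(\alpha)=0$ are correct. The subcase $p(\alpha)\neq0$, however, has a genuine gap, exactly at the obstacle you flagged.

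Put $q:=c_j/p(\alpha)$ and suppose $q$ is a negative integer. Then every test element you propose fails. For $a=r\mathbf{e}_j+s\alpha$ the factors are $p(\alpha)(rq+s+m)$, and these vanish at $m=-rq-s$ whenever $s\leq -rq$. So $a=r\mathbf{e}_j$ fails for every $r$, and so does $a=\mathbf{e}_j+\alpha$. No ``fraction-avoiding shift'' is available, because exponents must be integers.

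The fallback does not repair this either:
\begin{itemize}
\item $\bigoplus_{m\leq M}\overline{B}_{a_0+m\alpha}$ is not $\partial$-stable, since the top piece is sent to $\overline{B}_{a_0+(M+1)\alpha}\neq0$.
\item The coefficient of $\partial^N(\overline{x}_j^{\,r})$ is $\prod_{m<N}\bigl(rc_j+m\,p(\alpha)\bigr)$, which is $0$ as soon as $N>-rq$.
\end{itemize}

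This case really occurs. On the full simplex on $[2]$, take $\partial:=\overline{x}_1\bigl(\overline{x}_1\,\partial/\partial\overline{x}_1-\overline{x}_2\,\partial/\partial\overline{x}_2\bigr)$, which has degree $\mathbf{e}_1$. With your arbitrary choice $j=2$ one gets $\partial^2(\overline{x}_2)=0$, $\partial^{r+1}(\overline{x}_2^{\,r})=0$ and $\partial(\overline{x}_1\overline{x}_2)=0$. So none of your test elements detects that $\partial$ is not locally nilpotent.

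The missing idea is to test on $\overline{\mathbf{x}}^{\alpha}$ itself. It is a nonzero basis monomial, because $\supp\alpha\subseteq S\in\Delta$. For $a=\alpha$ each factor is $p(\alpha)+m\,p(\alpha)=(m+1)\,p(\alpha)$, so $\partial^k(\overline{\mathbf{x}}^{\alpha})=k!\,p(\alpha)^k\,\overline{\mathbf{x}}^{(k+1)\alpha}\neq0$ in characteristic zero. This is exactly the paper's argument.

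Alternatively, you could keep your approach but choose $j$ with care. From $\sum_l\alpha_l c_l=p(\alpha)$ it follows that some $l\in\supp\alpha$ has $c_l\neq0$ and $c_l/p(\alpha)\notin\ZZ_{<0}$. Otherwise the right-hand side of $1=\sum_l\alpha_l c_l/p(\alpha)$ would be $\leq 0$. For that $l$, the element $a=\mathbf{e}_l$ works.
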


\begin{proof}
Let $\partial$ be a nonzero $\ZZ^n$-homogeneous locally nilpotent derivation of degree $\alpha$, and write $\partial=\partial_{\alpha,p}$ as in \cref{lem:homog-der}\ref{hd.2}. By \cref{lem:homog-der}\ref{hd.1}, $\alpha\in\ZZ^n_{\geq0}\cup\mathcal{N}$, so it suffices to show $\alpha\notin\ZZ^n_{\geq0}$. Assume for contradiction that $\alpha\in\ZZ^n_{\geq0}$, and set $S:=\supp(\alpha)$.

Since $\partial_{\alpha,p}\neq0$, there exists $a\in\ZZ^n_{\geq0}$ with $p(a)\neq0$ and $\supp(a)\cup S\in\Delta$. Using the linearity of $p$ and the formula $\partial_{\alpha,p}(\overline{\mathbf{x}}^a)=p(a)\,\overline{\mathbf{x}}^{a+\alpha}$, a direct induction gives
$$
\partial_{\alpha,p}^{\,m}(\overline{\mathbf{x}}^a)=\prod_{j=0}^{m-1}\bigl(p(a)+j\,p(\alpha)\bigr)\cdot\overline{\mathbf{x}}^{a+m\alpha}
$$
for all $m\geq1$. Since $\alpha\in\ZZ^n_{\geq0}$, we have $\supp(a+m\alpha)=\supp(a)\cup S$ for every $m\geq1$, which is a face of $\Delta$; hence $\overline{\mathbf{x}}^{a+m\alpha}\neq0$ for all $m\geq1$.

If $p(\alpha)\neq0$, take $a=\alpha$; this is legitimate, because $S=\supp(\alpha)$ satisfies $\supp(a_0)\cup S\in\Delta$ for the element $a_0$ chosen above, so in particular $S\in\Delta$ and $\supp(a)\cup S=S\in\Delta$; the product becomes $m!\,p(\alpha)^m$, which is nonzero in characteristic zero. If $p(\alpha)=0$, the product equals $p(a)^m\neq0$. In either case $\partial_{\alpha,p}^{\,m}(\overline{\mathbf{x}}^a)\neq0$ for all $m\geq1$, contradicting local nilpotency.
\end{proof}

\begin{corollary}\label{cor:root-form}
Every nonzero $\ZZ^n$-homogeneous locally nilpotent derivation of $\KK[\Delta]$ is of the form
$$
\partial_{\alpha,p}=\lambda\,\overline{\mathbf{x}}^{\beta}\,\frac{\partial}{\partial\overline{x}_i}
$$
for some $i\in[n]$, $\beta\in\ZZ^n_{\geq0}$ with $\beta_i=0$, and $\lambda\in\KK^*$.
\end{corollary}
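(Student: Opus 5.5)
The corollary follows by chaining \cref{lem:root-shape} with \cref{lem:homog-der}. Let $\partial$ be a nonzero $\ZZ^n$-homogeneous locally nilpotent derivation of $\KK[\Delta]$, of degree $\alpha$.

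First, \cref{lem:root-shape} gives $\alpha\in\mathcal{N}$. So we may write $\alpha=\beta-\mathbf{e}_i$ with $i\in[n]$, $\beta\in\ZZ^n_{\geq0}$ and $\beta_i=0$. The index $i$ and the vector $\beta$ are determined by $\alpha$: by \eqref{eq:hd-star}, $i$ is the unique coordinate where $\alpha$ is negative, and then $\beta=\alpha+\mathbf{e}_i$.

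Second, \cref{lem:homog-der}\ref{hd.2} identifies $\partial$ with $\partial_{\alpha,p}$. Then \cref{lem:homog-der}\ref{hd.3}, applied to this $\alpha$, gives $p=\lambda e_i^*$ with $\lambda\in\KK^*$. Consequently $\partial(\overline{\mathbf{x}}^a)=\lambda a_i\overline{\mathbf{x}}^{a+\beta-\mathbf{e}_i}$ for every $a\in\Lambda$. This is exactly the action of $\lambda\,\overline{\mathbf{x}}^{\beta}\,\partial/\partial\overline{x}_i$ on the monomial basis, which is the claimed form.

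There is no real obstacle: the substantive inputs are the exclusion of nonnegative degrees in \cref{lem:root-shape} and the rigidity of the coefficient vector $p$ in \cref{lem:homog-der}\ref{hd.3}, and both are already proved.

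One point needs a word, namely that the operator $\overline{\mathbf{x}}^{\beta}\,\partial/\partial\overline{x}_i$ makes sense on $\KK[\Delta]$. It is defined on monomials via the convention $\overline{\mathbf{x}}^b=0$ for $b\notin\Lambda$, and it coincides with $\partial/\lambda$, which is a derivation. Hence nothing further needs to be checked.
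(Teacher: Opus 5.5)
Your proposal is correct and follows the paper's proof: apply \cref{lem:root-shape} to get $\alpha=\beta-\mathbf{e}_i\in\mathcal{N}$, then use \cref{lem:homog-der}\ref{hd.3} to get $p=\lambda e_i^*$ with $\lambda\in\KK^*$ and read off $\lambda a_i\,\overline{\mathbf{x}}^{a+\beta-\mathbf{e}_i}$ on the monomial basis. Your extra remarks on the uniqueness of $(i,\beta)$ and on the well-definedness of the operator are harmless additions.
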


\begin{proof}
By \cref{lem:root-shape}, $\alpha=\beta-\mathbf{e}_i\in\mathcal{N}$ for some $i\in[n]$ and $\beta\in\ZZ^n_{\geq0}$ with $\beta_i=0$; \cref{lem:homog-der}\ref{hd.3} then gives $p=\lambda\,e_i^*$ for some $\lambda\in\KK^*$ and $\partial_{\alpha,p}(\overline{\mathbf{x}}^a)=\lambda\,e_i^*(a)\,\overline{\mathbf{x}}^{a+\beta-\mathbf{e}_i}=\lambda\,a_i\,\overline{\mathbf{x}}^{a+\beta-\mathbf{e}_i}$, which coincides with $\lambda\,\overline{\mathbf{x}}^{\beta}\,\partial/\partial\overline{x}_i$ evaluated at $\overline{\mathbf{x}}^a$.
\end{proof}

\begin{lemma}\label{lem:preserves-primes}
Let $i\in[n]$, $\beta\in\ZZ^n_{\geq0}$ with $\beta_i=0$, and
set $S:=\supp(\beta)$. The derivation
$\partial_0:=\mathbf{x}^{\beta}\,\partial/\partial x_i$ of
$\KK[\mathbf{x}]$ satisfies
$\partial_0(\mathfrak{p}_F)\subseteq\mathfrak{p}_F$ for every
facet $F$ if and only if
$\mathcal{F}_S(\Delta)\subseteq\mathcal{F}_i(\Delta)$. In
that case, $\partial_0$ preserves $I_\Delta$ and descends to a
derivation of $\KK[\Delta]$.
\end{lemma}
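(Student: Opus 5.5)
The proof reduces to the behaviour of $\partial_0$ on the generators of the prime $\mathfrak{p}_F=(x_l:l\notin F)$, one facet at a time. For a derivation $D$ of $\KK[\mathbf{x}]$ and a prime generated by variables, $D(\mathfrak{p}_F)\subseteq\mathfrak{p}_F$ holds if and only if $D(x_l)\in\mathfrak{p}_F$ for every $l\notin F$. Indeed, by Leibniz, $D(gx_l)=D(g)x_l+gD(x_l)$, and the first summand already lies in $\mathfrak{p}_F$. Since $\partial_0(x_l)=0$ for $l\neq i$ and $\partial_0(x_i)=\mathbf{x}^\beta$, the condition is vacuous when $i\in F$. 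When $i\notin F$ it becomes $\mathbf{x}^\beta\in\mathfrak{p}_F$. A monomial lies in the monomial prime $\mathfrak{p}_F$ exactly when it is divisible by some $x_l$ with $l\notin F$, that is, exactly when $S=\supp\beta\not\subseteq F$.

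Combining these facts, $\partial_0$ preserves every $\mathfrak{p}_F$ if and only if every facet $F$ with $i\notin F$ satisfies $S\not\subseteq F$. Contrapositively, every facet containing $S$ contains $i$, which is the inclusion $\mathcal{F}_S(\Delta)\subseteq\mathcal{F}_i(\Delta)$. This gives the equivalence. It also covers the degenerate case $S\notin\Delta$, where $\mathcal{F}_S(\Delta)=\emptyset$, the inclusion holds trivially, and $\mathbf{x}^\beta\in I_\Delta\subseteq\mathfrak{p}_F$ for all $F$.

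For the final assertion, I would use the primary decomposition $I_\Delta=\bigcap_F\mathfrak{p}_F$ recalled in \cref{subsec:sr}. If $f\in I_\Delta$, then $f\in\mathfrak{p}_F$ for every facet $F$, so $\partial_0(f)\in\mathfrak{p}_F$ for every $F$, and hence $\partial_0(f)\in I_\Delta$. A derivation of $\KK[\mathbf{x}]$ that preserves an ideal induces a well-defined derivation of the quotient, given by $\bar f\mapsto\overline{\partial_0 f}$, and the Leibniz rule passes to the quotient.

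No step looks like a serious obstacle. The only point needing care is the reduction to generators, namely checking membership on the generators $x_l$ rather than on all of $\mathfrak{p}_F$, and that is a one-line Leibniz computation.
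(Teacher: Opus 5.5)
Your proposal is correct and follows essentially the same route as the paper: check $\partial_0$ on the generators $x_l$, $l\notin F$, of $\mathfrak{p}_F$, note that only $l=i$ matters and that $\mathbf{x}^\beta\in\mathfrak{p}_F$ if and only if $S\not\subseteq F$, and then use $I_\Delta=\bigcap_F\mathfrak{p}_F$ to descend to $\KK[\Delta]$. You also spell out two points the paper leaves implicit, namely the Leibniz reduction to generators and the degenerate case $S\notin\Delta$, and both are handled correctly.
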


\begin{proof}
Fix a facet $F$. The generators $x_l$, $l\notin F$, of
$\mathfrak{p}_F$ satisfy $\partial_0(x_l)=0$ unless $l=i$; if
$i\in F$ there is nothing to check, and if $i\notin F$ then
$\partial_0(x_i)=\mathbf{x}^\beta\in\mathfrak{p}_F$ if and
only if $S\not\subseteq F$. Hence
$\partial_0(\mathfrak{p}_F)\subseteq\mathfrak{p}_F$ for every
facet $F$ if and only if every facet containing $S$ also
contains $i$, i.e.,
$\mathcal{F}_S(\Delta)\subseteq\mathcal{F}_i(\Delta)$. Since
$I_\Delta=\bigcap_{F\in\mathcal{F}(\Delta)}\mathfrak{p}_F$,
the inclusion
$\partial_0(\mathfrak{p}_F)\subseteq\mathfrak{p}_F$ for every
$F$ gives $\partial_0(I_\Delta)\subseteq I_\Delta$, so
$\partial_0$ descends to a derivation of $\KK[\Delta]$.
\end{proof}

\begin{lemma}\label{lem:induced-derivation}
Let $i\in[n]$, $\beta\in\ZZ^n_{\geq0}$ with $\beta_i=0$, $S:=\supp(\beta)$, and suppose $\emptyset\neq\mathcal{F}_S(\Delta)\subseteq\mathcal{F}_i(\Delta)$. The induced derivation of $\partial:=\overline{\mathbf{x}}^{\beta}\,\partial/\partial\overline{x}_i$ on $\KK[F]$ is nonzero if and only if $F\in\mathcal{F}_S(\Delta)$.
\end{lemma}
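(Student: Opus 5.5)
First I make sense of ``induced derivation on $\KK[F]$''. By \cref{lem:preserves-primes}, the lift $\partial_0=\mathbf{x}^\beta\,\partial/\partial x_i$ preserves every $\mathfrak{p}_F$, because $\mathcal{F}_S(\Delta)\subseteq\mathcal{F}_i(\Delta)$. Hence $\partial$ preserves each minimal prime $\mathfrak{q}_F=\mathfrak{p}_F/I_\Delta$, and descends to $\KK[F]=\KK[\Delta]/\mathfrak{q}_F\cong\KK[x_j:j\in F]$. It is the derivation induced by $\partial_0$ on $\KK[\mathbf{x}]/\mathfrak{p}_F$. Concretely, under the identification of $\KK[\mathbf{x}]/\mathfrak{p}_F$ with $\KK[x_j:j\in F]$ (set $x_l=0$ for $l\notin F$), the induced derivation sends each generator $x_j$, $j\in F$, to the image of $\partial_0(x_j)$. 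That image is $\delta_{ij}\,\mathbf{x}^\beta$ reduced modulo $\mathfrak{p}_F$.

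The plan is therefore to compute the value on generators and split into cases. A derivation of the polynomial ring $\KK[x_j:j\in F]$ is zero if and only if it kills every $x_j$ with $j\in F$. For $j\neq i$, $\partial_0(x_j)=0$. So the induced derivation is nonzero if and only if $i\in F$ and the image of $\mathbf{x}^\beta$ in $\KK[x_j:j\in F]$ is nonzero.

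\emph{Case $F\in\mathcal{F}_S(\Delta)$.} Then $F\in\mathcal{F}_i(\Delta)$ by hypothesis, so $i\in F$. Also $\supp\beta=S\subseteq F$, so $\mathbf{x}^\beta$ involves only variables indexed by $F$ and survives as a nonzero monomial. The induced derivation is $\mathbf{x}^\beta\,\partial/\partial x_i$ on $\KK[x_j:j\in F]$, which is nonzero.

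\emph{Case $F\notin\mathcal{F}_S(\Delta)$.} Then some $l\in S\setminus F$ exists, with $\beta_l\geq1$. So $\mathbf{x}^\beta\in(x_l)\subseteq\mathfrak{p}_F$ and its image vanishes. The induced derivation then kills all generators and is zero, whether or not $i\in F$.

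The only delicate point is the compatibility of identifications: I must check that the derivation induced on $\KK[\Delta]/\mathfrak{q}_F$ agrees with the one induced by $\partial_0$ on $\KK[\mathbf{x}]/\mathfrak{p}_F$. This holds because $\partial_0$ lifts $\partial$ and $\mathfrak{p}_F\supseteq I_\Delta$ is $\partial_0$-stable. Beyond that the argument is routine. The hypothesis $\mathcal{F}_S(\Delta)\neq\emptyset$ is not used, apart from guaranteeing that $S\in\Delta$ and that $\partial$ is nonzero at all.
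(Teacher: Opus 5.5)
Your proposal is correct and follows the paper's own proof: descend $\partial$ to $\KK[F]$ via \cref{lem:preserves-primes}, evaluate on generators, and observe that the image of $\overline{\mathbf{x}}^\beta$ in $\KK[F]$ is nonzero exactly when $S\subseteq F$. You also check explicitly that $i\in F$ whenever $F\in\mathcal{F}_S(\Delta)$, using $\mathcal{F}_S(\Delta)\subseteq\mathcal{F}_i(\Delta)$; the paper leaves this step implicit.
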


\begin{proof}
First, $\partial$ is well defined on $\KK[F]$: by \cref{lem:preserves-primes} the lift $\mathbf{x}^\beta\,\partial/\partial x_i$ preserves $\mathfrak{p}_F$, so $\partial$ preserves $\mathfrak{q}_F$ and induces a derivation of $\KK[F]=\KK[\Delta]/\mathfrak{q}_F$. That derivation annihilates the class of $\overline{x}_j$ for $j\neq i$ and sends the class of $\overline{x}_i$ to that of $\overline{\mathbf{x}}^\beta$. The image of $\overline{\mathbf{x}}^\beta$ in $\KK[F]\cong\KK[x_j:j\in F]$ is nonzero if and only if $S\subseteq F$, i.e., $F\in\mathcal{F}_S(\Delta)$.
\end{proof}

\begin{proposition}\label{prop:roots-facets}
Let $i\in[n]$, let $\beta\in\ZZ^n_{\geq0}$ with $\beta_i=0$, and set $S:=\supp(\beta)$. The formula
$$
\partial:=\overline{\mathbf{x}}^{\beta}\,\frac{\partial}{\partial \overline{x}_i}
$$
defines a nonzero locally nilpotent derivation of $\KK[\Delta]$ if and only if
\begin{equation}\label{eq:root-condition}
\emptyset\neq\mathcal{F}_{S}(\Delta)\subseteq\mathcal{F}_i(\Delta).
\end{equation}
Condition \eqref{eq:root-condition} is the description of the negative roots in
\cite[Proposition~3]{Mul95}; see \cref{rem:muller-dictionary}.
\end{proposition}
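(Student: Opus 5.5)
The proof splits into the two directions of the equivalence, with local nilpotency handled separately.

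For sufficiency, assume \eqref{eq:root-condition}. By \cref{lem:preserves-primes}, the lift $\partial_0=\mathbf{x}^\beta\,\partial/\partial x_i$ preserves every $\mathfrak{p}_F$, hence preserves $I_\Delta$, so $\partial$ is a well-defined derivation of $\KK[\Delta]$.

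To see that $\partial\neq0$, pick $F\in\mathcal{F}_S(\Delta)$, which exists because $\mathcal{F}_S(\Delta)\neq\emptyset$. By \cref{lem:induced-derivation}, the induced derivation on $\KK[F]$ is nonzero, so $\partial$ itself is nonzero.

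Local nilpotency comes next. Since $\beta_i=0$, the derivation $\partial$ kills $\overline{x}_j$ for $j\neq i$ and also kills $\overline{\mathbf{x}}^\beta$. Hence $\partial^2(\overline{x}_i)=\partial(\overline{\mathbf{x}}^\beta)=0$. So $\partial$ is nilpotent on a generating set, and a derivation that is locally nilpotent on algebra generators is locally nilpotent, by the Leibniz formula for $\partial^m(fg)$. Alternatively, one can use directly the monomial formula $\partial(\overline{\mathbf{x}}^a)=a_i\,\overline{\mathbf{x}}^{a+\beta-\mathbf{e}_i}$: the $i$-th exponent drops by one at each step, so $\partial^{a_i+1}(\overline{\mathbf{x}}^a)=0$.

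For necessity, suppose the formula defines a nonzero derivation $\partial$ of $\KK[\Delta]$. I first want to show it preserves each minimal prime $\mathfrak{q}_F$. This holds because, in characteristic zero, any derivation of a Noetherian $\mathbb{Q}$-algebra preserves its minimal primes (Seidenberg). An alternative is to lift to a derivation of $\KK[\mathbf{x}]$ preserving $I_\Delta$, by \cite[Theorem~2.2]{DLMFR24}, and argue on the lift. Here the lift of $\partial$ can be taken to be $\partial_0=\mathbf{x}^\beta\,\partial/\partial x_i$ itself, modulo terms with coefficients in $I_\Delta$. If $\partial_0$ preserves $I_\Delta$, then it preserves each associated prime $\mathfrak{p}_F$, again by Seidenberg's theorem, since $I_\Delta$ is radical. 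Then \cref{lem:preserves-primes} gives $\mathcal{F}_S(\Delta)\subseteq\mathcal{F}_i(\Delta)$.

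The condition $\mathcal{F}_S(\Delta)\neq\emptyset$ is exactly $S\in\Delta$. If $S\notin\Delta$, then $\overline{\mathbf{x}}^\beta=0$ and $\partial=0$, contradicting nonzeroness.

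The main obstacle is making precise that ``the formula defines a derivation'' means well-definedness of $\partial_0$ modulo $I_\Delta$. I would phrase it as $\partial_0(I_\Delta)\subseteq I_\Delta$ and then pass to the primes via the radical/Seidenberg argument.

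A more hands-on route for this step avoids Seidenberg. Suppose some facet $F\supseteq S$ with $i\notin F$ existed. Then $F\cup\{i\}\notin\Delta$, so it contains a minimal non-face $\sigma\ni i$, and $\mathbf{x}^\sigma\in I_\Delta$. Applying the derivation gives $\partial_0(\mathbf{x}^\sigma)=\mathbf{x}^{\beta}\mathbf{x}^{\sigma\setminus\{i\}}$. Its support is $S\cup(\sigma\setminus\{i\})\subseteq F$, a face, so this monomial is not in $I_\Delta$, contradicting well-definedness. I would present this direct argument.
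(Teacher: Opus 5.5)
Your proposal is correct and follows essentially the paper's proof. Sufficiency goes through \cref{lem:preserves-primes}, and necessity uses the same minimal non-face $\sigma\ni i$ inside $F\cup\{i\}$ together with the observation that $S\notin\Delta$ forces $\partial=0$; the only differences are that the paper proves local nilpotency on $\KK[\mathbf{x}]$ before descending and gets $\partial\neq0$ directly from $\partial(\overline{x}_i)=\overline{\mathbf{x}}^\beta\neq0$ rather than via \cref{lem:induced-derivation}, and the Seidenberg detour you sketched is not needed.
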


\begin{proof}
Write $\partial_0:=\mathbf{x}^{\beta}\,\partial/\partial x_i$ on $\KK[\mathbf{x}]$. Since $\beta_i=0$ we have $\partial_0(x_j)\in\ker\partial_0$ for every $j$, whence $\partial_0$ is locally nilpotent on $\KK[\mathbf{x}]$ by an immediate induction on the $x_i$-degree.

Sufficiency: by \cref{lem:preserves-primes} the inclusion $\mathcal{F}_S(\Delta)\subseteq\mathcal{F}_i(\Delta)$ gives $\partial_0(\mathfrak{p}_F)\subseteq\mathfrak{p}_F$ for every facet $F$, hence $\partial_0(I_\Delta)\subseteq I_\Delta$, and $\partial_0$ descends to a locally nilpotent $\partial$ on $\KK[\Delta]$. As $\mathcal{F}_S(\Delta)\neq\emptyset$ we have $S\in\Delta$, so $\overline{\mathbf{x}}^{\beta}$ is a basis element and $\partial(\overline{x}_i)=\overline{\mathbf{x}}^{\beta}\neq0$.

Conversely, we prove the contrapositive: if \eqref{eq:root-condition} fails, then the formula does not define a nonzero locally nilpotent derivation of $\KK[\Delta]$. The condition can fail in two ways: either $\mathcal{F}_S(\Delta)\not\subseteq\mathcal{F}_i(\Delta)$, or $\mathcal{F}_S(\Delta)=\emptyset$. Suppose first that $\mathcal{F}_S(\Delta)\not\subseteq\mathcal{F}_i(\Delta)$. Pick a facet $F$ with $S\subseteq F$ and $i\notin F$. Since $F$ is a facet, $F\cup\{i\}\notin\Delta$, so there is a minimal non-face $\sigma\subseteq F\cup\{i\}$, and necessarily $i\in\sigma$, for otherwise $\sigma\subseteq F$ would be a face. Then $\mathbf{x}^\sigma\in\operatorname{G}(I_\Delta)\subseteq I_\Delta$, while $\partial_0(\mathbf{x}^\sigma)=\mathbf{x}^{\beta+\mathbf{e}_{\sigma\setminus\{i\}}}$ has support $S\cup(\sigma\setminus\{i\})\subseteq F$, a face; hence $\partial_0(\mathbf{x}^\sigma)\notin I_\Delta$. Now suppose $\mathcal{F}_S(\Delta)=\emptyset$, i.e.\ $S\notin\Delta$. Then $\partial_0$ does preserve $I_\Delta$ (each nonzero $\partial_0(\mathbf{x}^\sigma)$ is divisible by $\mathbf{x}^{\mathbf{e}_S}\in I_\Delta$), but the induced derivation of $\KK[\Delta]$ is zero: it annihilates $\overline{x}_j$ for $j\neq i$ and sends $\overline{x}_i$ to $\overline{\mathbf{x}}^\beta=0$.
\end{proof}

We can now identify the combinatorial root set of \cref{def:roots-comb} with the set of degrees
of homogeneous locally nilpotent derivations, justifying the terminology of \cref{sec:roots}.

\begin{theorem}\label{thm:roots-agree}\label{cor:roots-formula}
Let $\Delta$ be a simplicial complex on $[n]$. A vector $\alpha\in\ZZ^n$ is the degree of a
nonzero $\ZZ^n$-homogeneous locally nilpotent derivation of $\KK[\Delta]$ if and only if
$\alpha$ is a Demazure root of $\Delta$ in the sense of \cref{def:roots-comb}. Equivalently,
$$
\mathcal{R}(\Delta)=\bigl\{\beta-\mathbf{e}_i \;:\; i\in[n],\ \beta\in\ZZ^n_{\geq0},\ \beta_i=0,\ \emptyset\neq\mathcal{F}_{\supp(\beta)}(\Delta)\subseteq\mathcal{F}_i(\Delta)\bigr\}.
$$
From here on the two readings of $\mathcal{R}(\Delta)$ may be used interchangeably.
\end{theorem}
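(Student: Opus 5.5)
The plan is to prove the two inclusions separately, combining the classification of homogeneous locally nilpotent derivations with the existence criterion already established.

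\emph{Degrees are roots.} Let $\partial\neq0$ be a $\ZZ^n$-homogeneous locally nilpotent derivation of $\KK[\Delta]$ of degree $\alpha$.
\begin{enumerate}[label=\textup{(\roman*)}]
\item By \cref{cor:root-form}, $\partial=\lambda\,\overline{\mathbf{x}}^{\beta}\,\partial/\partial\overline{x}_i$ with $\lambda\in\KK^*$, $\beta\in\ZZ^n_{\geq0}$, $\beta_i=0$, and $\alpha=\beta-\mathbf{e}_i$.
\item Dividing by $\lambda$ does not affect local nilpotency, so $\overline{\mathbf{x}}^{\beta}\,\partial/\partial\overline{x}_i$ is a nonzero locally nilpotent derivation.
\item The ``only if'' direction of \cref{prop:roots-facets} then gives $\emptyset\neq\mathcal{F}_{\supp\beta}(\Delta)\subseteq\mathcal{F}_i(\Delta)$.
\item By \cref{lem:closure-Ai}, this says $\supp\beta\in\mathcal{A}_i(\Delta)$; here $\supp\beta\subseteq[n]\setminus\{i\}$ because $\beta_i=0$.
\item Hence $\alpha\in\mathcal{R}(\Delta)$ in the sense of \cref{def:roots-comb}.
\end{enumerate}

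\emph{Roots are degrees.} Let $\alpha=\beta-\mathbf{e}_i$ with $\supp\beta\in\mathcal{A}_i(\Delta)$ and $\beta_i=0$.
\begin{enumerate}[label=\textup{(\roman*)}]
\item \cref{lem:closure-Ai} translates this into condition \eqref{eq:root-condition}.
\item The ``if'' direction of \cref{prop:roots-facets} gives that $\partial:=\overline{\mathbf{x}}^{\beta}\,\partial/\partial\overline{x}_i$ is a nonzero locally nilpotent derivation of $\KK[\Delta]$.
\item It is $\ZZ^n$-homogeneous of degree $\alpha$: on the monomial basis, $\partial(\overline{\mathbf{x}}^a)=a_i\,\overline{\mathbf{x}}^{a+\beta-\mathbf{e}_i}$, which lies in $\overline{B}_{a+\alpha}$, with the convention $\overline{\mathbf{x}}^b=0$ for $b\notin\Lambda$.
\end{enumerate}

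\emph{The displayed formula.} This is the same statement with $\mathcal{A}_i$ rewritten through \cref{lem:closure-Ai}, so it needs no separate argument.

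The only step needing care is making sure that \cref{cor:root-form} and \cref{prop:roots-facets} are applied with matching normalisations. \cref{cor:root-form} produces a scalar multiple of the standard derivation, and the degree is insensitive to that scalar. Under the standing convention that $\{i\}\in\Delta$, the representation $\alpha=\beta-\mathbf{e}_i$ is unique by \cref{lem:root-data}(i). With both points settled, the argument is a direct combination of earlier results and I expect no real obstacle.
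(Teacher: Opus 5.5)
Your proposal is correct and follows the paper's proof: combine \cref{cor:root-form} with both directions of \cref{prop:roots-facets}, then translate the condition via \cref{lem:closure-Ai}. Your explicit homogeneity check is a harmless addition. The closing remark on uniqueness of the representation $\alpha=\beta-\mathbf{e}_i$ is not needed for the argument, and it does not depend on the convention $\{i\}\in\Delta$ in any case.
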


\begin{proof}
By \cref{cor:root-form}, every nonzero $\ZZ^n$-homogeneous locally nilpotent derivation of
$\KK[\Delta]$ has the form $\lambda\,\overline{\mathbf{x}}^{\beta}\,\partial/\partial\overline{x}_i$
with $i\in[n]$, $\beta\in\ZZ^n_{\geq0}$, $\beta_i=0$ and $\lambda\in\KK^*$, and its degree is
$\beta-\mathbf{e}_i$. By \cref{prop:roots-facets}, such a formula defines a nonzero locally
nilpotent derivation of $\KK[\Delta]$ if and only if
$\emptyset\neq\mathcal{F}_{\supp(\beta)}(\Delta)\subseteq\mathcal{F}_i(\Delta)$, which proves
the displayed description. By \cref{lem:closure-Ai}, that condition is equivalent
to $\supp(\beta)\in\mathcal{A}_i(\Delta)$, which is the defining condition of
\cref{def:roots-comb}.
\end{proof}

\rchange{\begin{corollary}\label{cor:hidden-rigid}
Let $\Delta$ be a simplicial complex on $[n]$. The following are equivalent:
\begin{enumerate}[label=\textup{(\alph*)}]
\item\label{hr.a} $X_\Delta$ is rigid;
\item\label{hr.b} $\Delta$ is fully hidden;
\item\label{hr.c} every face of $\Delta$ is an intersection of facets.
\end{enumerate}
\end{corollary}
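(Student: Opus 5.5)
The plan is to reduce everything to \cref{prop:rigid-cl} together with \cref{thm:roots-agree} and \cref{lem:decomposition}. The equivalence \ref{hr.b}$\Leftrightarrow$\ref{hr.c} is purely combinatorial: it is \ref{rc.e}$\Leftrightarrow$\ref{rc.c} of \cref{prop:rigid-cl}, so nothing new is needed there. The content lies in linking the geometric condition \ref{hr.a} with the combinatorial condition $\mathcal{R}(\Delta)=\emptyset$, which is \ref{rc.a} of the same proposition.

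For \ref{hr.a}$\Rightarrow$\ref{hr.b}, argue by contraposition. If $\Delta$ is not fully hidden, \cref{prop:rigid-cl} gives some $\alpha=\beta-\mathbf{e}_i\in\mathcal{R}(\Delta)$. By \cref{thm:roots-agree} (concretely by \cref{prop:roots-facets}, since $\supp(\beta)\in\mathcal{A}_i(\Delta)$ translates via \cref{lem:closure-Ai} into $\emptyset\neq\mathcal{F}_{\supp\beta}(\Delta)\subseteq\mathcal{F}_i(\Delta)$), the derivation $\overline{\mathbf{x}}^\beta\,\partial/\partial\overline{x}_i$ is a nonzero locally nilpotent derivation of $\KK[\Delta]$. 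Then $s\mapsto\exp(s\partial)$ is a nontrivial regular $\GA$-action, so $X_\Delta$ is non-rigid.

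For \ref{hr.b}$\Rightarrow$\ref{hr.a}, again argue by contraposition. Suppose $X_\Delta$ admits a nontrivial $\GA$-action. It comes from a nonzero locally nilpotent derivation $\partial$, and by \cref{lem:decomposition} some nonzero homogeneous component $\partial_\alpha$ is locally nilpotent. By \cref{thm:roots-agree}, its degree $\alpha$ lies in $\mathcal{R}(\Delta)$, so $\mathcal{R}(\Delta)\neq\emptyset$, and \cref{prop:rigid-cl} shows that $\Delta$ is not fully hidden. This is exactly the criterion already noted after the definition of rigidity: $X_\Delta$ is non-rigid if and only if $\KK[\Delta]$ has a nonzero $\ZZ^n$-homogeneous locally nilpotent derivation.

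There is no real obstacle, since every step is an invocation of earlier results. The only point needing care is the correspondence between $\GA$-actions and locally nilpotent derivations, in particular that a nonzero locally nilpotent derivation yields a \emph{nontrivial} action. That holds because $\frac{d}{ds}\exp(s\partial)|_{s=0}=\partial\neq0$, and this correspondence was already recalled in \cref{subsec:der-roots}.
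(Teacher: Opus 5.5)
Your proposal is correct and matches the paper's proof. The paper also uses \cref{lem:decomposition} and \cref{thm:roots-agree} to identify rigidity of $X_\Delta$ with $\mathcal{R}(\Delta)=\emptyset$, and then reads off both remaining equivalences from \cref{prop:rigid-cl}; your two contrapositive implications simply spell out that chain of equivalences.
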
}%
\rcomment{sugiero reescribir el corolario.}

\rchange{\begin{proof}
By \cref{lem:decomposition}, $X_\Delta$ is rigid if and only if $\KK[\Delta]$ admits no nonzero
$\ZZ^n$-homogeneous locally nilpotent derivation. By \cref{thm:roots-agree}, this holds if and
only if $\mathcal{R}(\Delta)=\emptyset$. \cref{prop:rigid-cl} gives that
$\mathcal{R}(\Delta)=\emptyset$ if and only if $\Delta$ is fully hidden, so \ref{hr.a} and
\ref{hr.b} are equivalent; the same proposition shows that $\Delta$ is fully hidden if and only
if every face of $\Delta$ is an intersection of facets, so \ref{hr.b} and \ref{hr.c} are
equivalent.
\end{proof}}%
\rcomment{La prueba tambi\'en como que se pone a difariar. Sugiero la prueba siguiente.}

\begin{remark}\label{rem:hidden-invisible}
Concretely, an exposed facet $F$, say with $S\in\mathcal{A}_i(\Delta)$ and
$S\subseteq F$, makes $\overline{\mathbf{x}}^{\mathbf{e}_S}\,\partial/\partial\overline{x}_i$ a
nonzero locally nilpotent derivation (\cref{prop:roots-facets}); conversely, by
\cref{lem:induced-derivation} every such derivation induces the zero derivation on $\KK[F]$ for
each hidden facet $F$, so when all facets are hidden it vanishes, $\bigcap_{F}\mathfrak{q}_F$
being $0$. This is the source of the phenomenon in \cref{ex:rigid-invisible}: hidden facets are
invisible to the $\GA$-actions on $X_\Delta$.
\end{remark}

\subsection{The automorphism ind-group}\label{subsec:autgroup}

Recall that $\Aut(X_\Delta)$ carries a natural ind-group structure (\cref{subsec:ind}). We write $T_0:=(\KK^*)^n$ for the standard torus, embedded in $\Aut(X_\Delta)$ via $t\cdot x_i=t_ix_i$, and identify $X(T_0)$ with $\ZZ^n$. Two facts proved here make $\Aut(X_\Delta)$ usable as an invariant: $T_0$ is a maximal torus and all maximal tori are conjugate (\cref{thm:tori}), and the weights of the root subgroups relative to $T_0$ are exactly the Demazure roots (\cref{prop:root-subgroups}); between them, an abstract isomorphism of ind-groups can be read off on the character lattice, which is what \cref{sec:main} does. We also show that the fully exposed hypothesis is sharp (\cref{prop:same-aut}) and settle the opposite extreme (\cref{thm:rigid-aut}).
\begin{lemma}\label{lem:origin}
Every $\varphi\in\Aut(X_\Delta)$ permutes the irreducible components
$\{V_F: F\in\mathcal{F}(\Delta)\}$, with $|F|=|\varphi(F)|$ for the induced
permutation of facets. Consequently $\varphi$ preserves the cone locus
$$
Y_\Delta:=\bigcap_{F\in\mathcal{F}(\Delta)}V_F
=\{x\in X_\Delta: x_l=0\text{ for every }l\notin c(\Delta)\}
\cong\AF^{|c(\Delta)|},
$$
where $c(\Delta)\subseteq[n]$ denotes the set of cone vertices of $\Delta$.
In particular, if $\Delta$ has no cone vertex, then every automorphism fixes
the origin $0\in X_\Delta$, and hence preserves the maximal ideal
$\mathfrak{m}_0:=(\overline{x}_1,\dots,\overline{x}_n)\subseteq\KK[\Delta]$
and all of its powers.
\end{lemma}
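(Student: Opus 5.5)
The plan is to work with the comorphism $\varphi^*\colon\KK[\Delta]\to\KK[\Delta]$ of $\varphi\in\Aut(X_\Delta)$. An automorphism of a Noetherian ring permutes its minimal primes. The minimal primes of $\KK[\Delta]$ are exactly the $\mathfrak{q}_F$, $F\in\mathcal{F}(\Delta)$, by the irredundant primary decomposition recalled in \cref{subsec:sr}. Hence $\varphi^*$ induces a permutation $F\mapsto\varphi(F)$ of $\mathcal{F}(\Delta)$ with $\varphi^*(\mathfrak{q}_{\varphi(F)})=\mathfrak{q}_F$, or the inverse convention; which one is immaterial. Geometrically, $\varphi$ maps $V_F$ isomorphically onto $V_{\varphi(F)}$. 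It therefore induces an isomorphism of rings $\KK[\varphi(F)]\cong\KK[F]$, that is, of polynomial rings in $|\varphi(F)|$ and $|F|$ variables. Comparing Krull dimensions gives $|F|=|\varphi(F)|$.

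Next, since $\varphi$ permutes the set $\{V_F\}$, it maps the intersection $Y_\Delta=\bigcap_F V_F$ onto itself. Algebraically, $\varphi^*$ permutes the $\mathfrak{q}_F$, so it preserves $\sum_F\mathfrak{q}_F$, the ideal of $Y_\Delta$. I would then apply \eqref{eq:intersections} with $\mathcal{S}=\mathcal{F}(\Delta)$. There $H=\bigcap_F F=c(\Delta)$, so $\sum_F\mathfrak{q}_F=(\overline{x}_l:l\notin c(\Delta))$, with quotient $\KK[x_j:j\in c(\Delta)]$. This identifies $Y_\Delta$ with the stated coordinate subspace, isomorphic to $\AF^{|c(\Delta)|}$.

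Finally, suppose $c(\Delta)=\emptyset$. Then $\sum_F\mathfrak{q}_F=(\overline{x}_1,\dots,\overline{x}_n)=\mathfrak{m}_0$, and $Y_\Delta$ is the single point $0$. Since $\varphi^*$ preserves $\sum_F\mathfrak{q}_F$, we get $\varphi^*(\mathfrak{m}_0)\subseteq\mathfrak{m}_0$. The same holds for $(\varphi^{-1})^*$, so equality holds and $\varphi$ fixes $0$. Being a ring automorphism, $\varphi^*$ also preserves every power $\mathfrak{m}_0^k$.

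There is no real obstacle here. The only point needing care is the claim that automorphisms permute minimal primes. This holds because minimality among primes is preserved by ring automorphisms, so I would state it briefly rather than prove it at length.
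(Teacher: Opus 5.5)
Your proposal is correct and follows the same route as the paper. The components are permuted with dimensions preserved, so $Y_\Delta=V(\sum_F\mathfrak{q}_F)$ is preserved and is identified via \eqref{eq:intersections} with $H=c(\Delta)$; when there is no cone vertex it becomes the origin, cut out by $\mathfrak{m}_0$. The only difference is presentation: you work algebraically, with the comorphism permuting the minimal primes $\mathfrak{q}_F$ and a Krull-dimension comparison of the quotients $\KK[F]$, whereas the paper simply cites the geometric fact that automorphisms permute irreducible components and preserve their dimensions.
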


\begin{proof}
An automorphism of a variety permutes its irreducible components and preserves their
dimensions, which gives the first assertion; consequently $\varphi$ preserves
$\bigcap_{F}V_F=V(\sum_{F}\mathfrak{q}_F)$, which by \eqref{eq:intersections} with
$H=c(\Delta)$ is the coordinate subspace $\{x_l=0: l\notin c(\Delta)\}\cong\AF^{|c(\Delta)|}$,
reduced because the quotient in \eqref{eq:intersections} is a polynomial ring.
If $\Delta$ has no cone vertex, then $Y_\Delta=\{0\}$, so $\varphi(0)=0$;
the comorphism $\varphi^*$ then preserves
$\mathfrak{m}_0=\{f\in\KK[\Delta]: f(0)=0\}$ and hence $\mathfrak{m}_0^k$
for every $k\geq 1$.
\end{proof}

\begin{lemma}\label{lem:components}
Let $G\subseteq\Aut(X_\Delta)$ be a connected algebraic subgroup. Then 
$G$ maps $V_F$ onto itself for every $F\in\mathcal{F}(\Delta)$; in 
particular, $G$ acts algebraically on the cone locus $Y_\Delta$.
\end{lemma}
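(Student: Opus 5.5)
By \cref{lem:origin}, every element of $\Aut(X_\Delta)$ permutes the finite set $\{V_F: F\in\mathcal{F}(\Delta)\}$. This gives a group homomorphism $\pi\colon\Aut(X_\Delta)\to\operatorname{Sym}(\mathcal{F}(\Delta))$, and I would study its restriction to $G$. The kernel is a subgroup of finite index in $G$, and I plan to show that it is closed. Closedness follows because, for each $F$, the condition that $\varphi$ maps $V_F$ onto $V_F$ is the same as $\varphi^*(\mathfrak{q}_F)\subseteq\mathfrak{q}_F$. This can be checked on the generators $\overline{x}_l$, $l\notin F$, so it is a closed condition. Concretely, it says that certain coefficients of $\varphi^*(\overline{x}_l)$ vanish, namely those of the basis monomials with support contained in $F$. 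Because $\varphi$ is an automorphism and $\dim V_{\varphi(F)}=\dim V_F$, the inclusion $\varphi(V_F)\subseteq V_F$ already gives $\varphi(V_F)=V_F$. Hence $G\cap\ker\pi$ is a closed subgroup of finite index in the connected algebraic group $G$. A connected algebraic group has no proper closed subgroup of finite index, since the cosets would disconnect it. Therefore $G\subseteq\ker\pi$, which means $G$ maps each $V_F$ onto itself.

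The only delicate point is the closedness on the level of the filtration piece $\Aut(X_\Delta)_{\le d}$ that contains $G$. There the map $\varphi\mapsto\varphi^*(\overline{x}_l)$ is a morphism to the finite-dimensional space $A_{\le d}$. The condition $\varphi^*(\overline{x}_l)\in\mathfrak{q}_F$ says that the image lies in the linear subspace of $A_{\le d}$ spanned by the basis monomials whose support is not contained in $F$. This is a closed condition, so $\ker\pi\cap G$ is closed in $G$. Alternatively, one can avoid the filtration entirely. The action morphism $G\times X_\Delta\to X_\Delta$ restricts to $G\times V_F\to X_\Delta$, and the image is irreducible because $G$ is connected and hence irreducible. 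This image contains $V_F$ (take $g=e$) and consists of the irreducible closed sets $g(V_F)$, each of which is a component. Hence the image lies in the union of the finitely many components, and by irreducibility of the closure it lies in one component, which must be $V_F$. I would present this second argument as the main proof, since it is shorter.

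For the final assertion, each $g\in G$ preserves every $V_F$, and therefore preserves $Y_\Delta=\bigcap_F V_F$. This intersection is a closed subvariety, reduced by \eqref{eq:intersections}. The action $G\times X_\Delta\to X_\Delta$ therefore restricts to a morphism $G\times Y_\Delta\to Y_\Delta$, because a morphism landing set-theoretically in a reduced closed subvariety factors through it when the source is reduced. This gives an algebraic action of $G$ on $Y_\Delta$. I expect no real obstacle here. The one point needing care is the irreducibility step: $\overline{G\cdot V_F}$ is the closure of the image of the irreducible variety $G\times V_F$, so it is irreducible and contained in $X_\Delta=\bigcup_{F'}V_{F'}$, hence contained in a single $V_{F'}$, and $F'=F$ since $V_F\subseteq V_{F'}$ forces equality for components.
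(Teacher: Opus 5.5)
Your proof is correct, and the orbit-closure argument you plan to present as the main proof takes a different route from the paper.

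\textbf{First argument.} This is essentially the paper's proof. The paper also characterizes $g(V_F)=V_{G'}$ by $g^*(\mathfrak{q}_{G'})\subseteq\mathfrak{q}_F$ and checks this on the generators $\overline{x}_l$, $l\notin G'$. It gets closedness from the fact that $g\mapsto g^*(\overline{x}_l)$ is a morphism into a finite-dimensional space. There the paper uses the span $W_l$ of the $G$-orbit of $\overline{x}_l$, coming from rationality of the $G$-module $\KK[\Delta]$, where you use $A_{\leq d}$ on the filtration piece. The paper then partitions $G$ into the finitely many pairwise disjoint closed sets $Z_{G'}$ and concludes by connectedness. That is equivalent to your observation that the kernel is a closed subgroup of finite index.

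\textbf{Orbit-closure argument.} This route uses only three facts:
\begin{itemize}
\item the action map $G\times X_\Delta\to X_\Delta$ is a morphism (recorded in \cref{subsec:ind});
\item a connected algebraic group is irreducible;
\item $G\times V_F$ is therefore irreducible.
\end{itemize}
Hence $\overline{G\cdot V_F}$ is an irreducible closed set containing $V_F$, so it lies in a single component, which must be $V_F$.

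\textbf{Comparison.} Your orbit-closure route is shorter and coordinate-free. It is the standard fact that a connected group acting on a variety stabilizes every irreducible component, and it needs no ideal-theoretic description of the permutation of components. The paper's route uses connectedness only as a topological property together with closedness of linear conditions. Its criterion $\varphi(V_F)=V_{G'}\iff\varphi^*(\mathfrak{q}_{G'})\subseteq\mathfrak{q}_F$ is reused later, in the proof of \cref{thm:rigid-aut}.

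\textbf{Last assertion.} You factor $G\times Y_\Delta\to X_\Delta$ through the reduced closed subvariety $Y_\Delta$, using that the source is reduced. This is more explicit than the paper's one-line conclusion.
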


\begin{proof}
Fix $F$ and write $\varphi^*$ for the comorphism of $\varphi\in\Aut(X_\Delta)$. For a facet
$G'$ one has $\varphi(V_F)=V_{G'}$ if and only if
$\varphi^*(\mathfrak{q}_{G'})\subseteq\mathfrak{q}_F$: the inclusion says that every function
vanishing on $V_{G'}$ pulls back to one vanishing on $V_F$, i.e.\ $\varphi(V_F)\subseteq
V_{G'}$, and since $\varphi(V_F)$ is itself a component (\cref{lem:origin}) while distinct
components are incomparable, this forces equality. Since $\KK[\Delta]$ is a rational $G$-module, the $G$-orbit of each
generator $\overline{x}_l$ spans a finite-dimensional subspace
$W_l\subseteq\KK[\Delta]$, and the map $g\mapsto g^*(\overline{x}_l)\in W_l$
is a morphism of varieties. Since $\mathfrak{q}_F\cap W_l$ is a linear
subspace of $W_l$, the set $\{g\in G: g^*(\overline{x}_l)\in\mathfrak{q}_F\}$
is closed in $G$ for each $l$; hence
$$
Z_{G'}:=\bigcap_{l\notin G'}\{g\in G: g^*(\overline{x}_l)\in\mathfrak{q}_F\}
$$
is closed in $G$ as a finite intersection of closed sets. The sets $Z_{G'}$,
$G'\in\mathcal{F}(\Delta)$, are pairwise disjoint and cover $G$; as $G$ is
connected and $e\in Z_F$, we conclude $G=Z_F$. Finally, $Y_\Delta=
\bigcap_F V_F$ is $G$-stable.
\end{proof}
\begin{lemma}\label{lem:dim}
Let $y_0\in Y_\Delta$ and let $\mathfrak{m}_{y_0}\subseteq\KK[\Delta]$ be 
the corresponding maximal ideal. Then 
$\dim_\KK\mathfrak{m}_{y_0}/\mathfrak{m}_{y_0}^2=n$ and 
$\bigcap_{k\geq1}\mathfrak{m}_{y_0}^k=0$.
\end{lemma}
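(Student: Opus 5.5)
Since $y_0\in Y_\Delta$, its coordinates satisfy $y_{0,l}=0$ for $l\notin c(\Delta)$. The plan is to reduce to the origin by a translation. Consider the $\KK$-algebra map $\tau$ of $\KK[\mathbf{x}]$ given by $x_l\mapsto x_l+y_{0,l}$. It changes only the variables $x_l$ with $l\in c(\Delta)$. We claim $\tau(I_\Delta)=I_\Delta$. A minimal non-face $\sigma$ contains no cone vertex $l$: otherwise $\sigma\setminus\{l\}$ is a face, hence lies in some facet $F$, and $F\ni l$ gives $\sigma\subseteq F$, a contradiction. So $\tau$ fixes every generator $\mathbf{x}^\sigma$ of $I_\Delta$, and the same holds for $\tau^{-1}$. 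Hence $\tau$ descends to an automorphism of $\KK[\Delta]$ carrying $\mathfrak{m}_{y_0}$ to $\mathfrak{m}_0=(\overline{x}_1,\dots,\overline{x}_n)$ (up to orientation of the translation). Both claims are invariant under algebra automorphisms, so it suffices to treat $y_0=0$.

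For the cotangent space at the origin, use the $\ZZ$-grading $\KK[\Delta]=\bigoplus_e A_e$. Here $\mathfrak{m}_0=\bigoplus_{e\geq1}A_e$ and $\mathfrak{m}_0^2=\bigoplus_{e\geq2}A_e$. The inclusion $\supseteq$ holds because every monomial of degree at least $2$ is a product of two monomials of positive degree lying in $\Lambda$, using that $\Lambda$ is a down-set. Thus $\mathfrak{m}_0/\mathfrak{m}_0^2\cong A_1$. This space has basis $\overline{x}_1,\dots,\overline{x}_n$, since all singletons are faces and $I_\Delta$ has no linear generators. Hence its dimension is $n$.

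For the Krull-type intersection, the same grading gives $\mathfrak{m}_0^k=\bigoplus_{e\geq k}A_e$. Any nonzero $f$ has a homogeneous component of some lowest degree $e$, so $f\notin\mathfrak{m}_0^{e+1}$. Therefore $\bigcap_k\mathfrak{m}_0^k=0$. Alternatively one could cite Krull's intersection theorem for the noetherian ring $\KK[\Delta]$, but the graded argument is direct and avoids any localization subtleties.

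The only step needing care is the translation. The key point is checking that cone vertices never occur in minimal non-faces, which makes $\tau$ an automorphism of $\KK[\Delta]$. The rest is routine graded bookkeeping.
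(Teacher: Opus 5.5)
Your proof is correct, but it follows a different route from the paper's.

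The paper never moves $y_0$. It sets $M=(x_1-a_1,\dots,x_n-a_n)$ and shows $I_\Delta\subseteq M^2$: every non-face $\sigma$ has $|\sigma\setminus c(\Delta)|\geq2$, and the variables $x_l$ with $l\notin c(\Delta)$ already lie in $M$ because $a_l=0$. Hence $\mathfrak{m}_{y_0}/\mathfrak{m}_{y_0}^2=M/M^2$. For the intersection, the paper uses Krull's intersection theorem to get $t\in\mathfrak{m}_{y_0}$ with $(1-t)f=0$. Every component $V_F$ passes through $y_0$, where $1-t$ does not vanish, so irreducibility forces $f$ to vanish on $V_F$, and reducedness gives $f=0$.

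You use essentially the same combinatorial input, namely that minimal non-faces avoid $c(\Delta)$. You turn it into the statement that translations along $Y_\Delta$ are automorphisms of $X_\Delta$. After that, everything at the origin is graded bookkeeping: $\mathfrak{m}_0^k=\bigoplus_{e\geq k}A_e$ gives both claims at once, without Krull.

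What each route buys:
\begin{itemize}
\item Your version is more elementary and gives an explicit description of every power $\mathfrak{m}_{y_0}^k$.
\item It also records that $\Aut(X_\Delta)$ acts transitively on the cone locus. This reflects the splitting $X_\Delta\cong Y_\Delta\times X_\Gamma$, with $\Gamma$ the restriction of $\Delta$ to $[n]\setminus c(\Delta)$.
\item The paper's intersection argument does not use the grading at all. It applies verbatim to any point lying on every irreducible component of a reduced affine variety.
\end{itemize}
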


\begin{proof}
Write $c:=c(\Delta)$ and let $a\in\KK^n$ be the coordinate vector of $y_0$,
so that $a_l=0$ for $l\notin c$. For the first assertion, let 
$M:=(x_1-a_1,\dots,x_n-a_n)\subseteq\KK[\mathbf{x}]$, so that 
$\mathfrak{m}_{y_0}=M/I_\Delta$ and 
$\mathfrak{m}_{y_0}/\mathfrak{m}_{y_0}^2=M/(M^2+I_\Delta)$. We claim 
$I_\Delta\subseteq M^2$. For every $\sigma\notin\Delta$, one has 
$|\sigma\setminus c|\geq2$: if $\sigma\setminus c=\emptyset$ then 
$\sigma\subseteq c$ is in every facet, and if $\sigma\setminus c=\{l\}$ 
then $\sigma\subseteq c\cup\{l\}\subseteq F$ for any facet $F$ containing 
$l$; in both cases $\sigma\in\Delta$, a contradiction. Hence $\mathbf{x}^\sigma$ 
is divisible by $x_lx_{l'}$ for two distinct $l,l'\notin c$, and since 
$x_l-a_l\in M$ and likewise for $x_{l'}$, we get $\mathbf{x}^\sigma\in M^2$. 
Therefore $\mathfrak{m}_{y_0}/\mathfrak{m}_{y_0}^2=M/M^2$ has dimension $n$.

For the second assertion, let $f\in\bigcap_k\mathfrak{m}_{y_0}^k$. By the 
Krull intersection theorem \cite[Corollary~5.4]{Eis95}, there is 
$t\in\mathfrak{m}_{y_0}$ with $(1-t)f=0$. Every irreducible component 
$V_F$ contains $Y_\Delta$, hence $y_0$; since $(1-t)(y_0)=1\neq0$, the 
component $V_F$ is not contained in the zero set of $1-t$, and from 
$V_F\subseteq\{f=0\}\cup\{1-t=0\}$ and the irreducibility of $V_F$ we 
conclude $V_F\subseteq\{f=0\}$. As this holds for every component and 
$\KK[\Delta]$ is reduced, $f=0$.
\end{proof}

The following general lemma on ind-groups will be used repeatedly, here and in \cref{sec:main}.

\begin{lemma}\label{lem:image-algebraic}
Let $\theta\colon G\to G'$ be an injective homomorphism of ind-groups, and let $A\subseteq G$ be a connected algebraic subgroup. Then $\theta(A)$ is a connected algebraic subgroup of $G'$, commutative if $A$ is, and $\theta|_A\colon A\to\theta(A)$ is an isomorphism of algebraic groups.
\end{lemma}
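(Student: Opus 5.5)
The plan is to reduce everything to a single filtration piece and then invoke the standard fact that a morphism of algebraic groups has closed image. Write $G=\bigcup_k G_k$ and $G'=\bigcup_\ell G'_\ell$. Since $A$ is an algebraic subgroup, it is a closed subvariety of some $G_k$. Because $\theta$ is a morphism of ind-varieties, there is an $\ell$ with $\theta(G_k)\subseteq G'_\ell$ and $\theta|_{G_k}\colon G_k\to G'_\ell$ a morphism of varieties. Hence $\theta|_A\colon A\to G'_\ell$ is a morphism of varieties, and it is a group homomorphism.

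The key step is to show that $\theta(A)$ is a closed subgroup of $G'_\ell$, i.e.\ an algebraic subgroup. First I would note that $G'_\ell$ is not a group, so I cannot cite the closed-image theorem for homomorphisms of algebraic groups verbatim. Instead I would use the orbit argument. By Chevalley, $\theta(A)$ is constructible in $G'_\ell$, so it contains a dense open subset $U$ of its closure $\overline{\theta(A)}$ (closure taken in $G'_\ell$). Then I would show $\overline{\theta(A)}$ is a subgroup: multiplication $G'_\ell\times G'_\ell\to G'_{m}$ is a morphism into some larger piece, in which $G'_\ell$ is closed, so closure in $G'_\ell$ equals closure in $G'_m$. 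The usual argument ($a\overline{H}\subseteq\overline{H}$ for $a\in H$, then $\overline{H}a\subseteq\overline H$ for $a\in\overline H$) then shows $\overline{\theta(A)}$ is closed under products; inversion is handled the same way. Finally, if $x\in\overline{\theta(A)}$, the translates $U$ and $xU^{-1}$ are both dense open in the irreducible set $\overline{\theta(A)}$, which gives $x\in UU\subseteq\theta(A)$. Irreducibility here comes from $A$ being connected, hence irreducible, so the closure of its image is irreducible. This handling of ambient pieces is the main technical care point, though I expect it to be routine.

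Connectedness and commutativity pass to the image directly, since $\theta(A)$ is the continuous image of an irreducible space and a homomorphic image of a commutative group. For the last claim, $\theta|_A\colon A\to\theta(A)$ is a bijective homomorphism of algebraic groups. In characteristic zero, a bijective morphism of algebraic groups is an isomorphism: it is separable, hence birational onto the normal variety $\theta(A)$ (smooth, being an algebraic group), and Zariski's main theorem, or equivalently bijectivity of the tangent map via homogeneity, gives an isomorphism. This characteristic-zero step is where the standing hypothesis on $\KK$ is used, since in positive characteristic Frobenius gives counterexamples.
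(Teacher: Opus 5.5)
Your proposal is correct and follows essentially the same route as the paper: Chevalley's theorem inside a single filtration piece $G'_\ell$, the closure $\overline{\theta(A)}$ shown to be a subgroup because translations and inversion are homeomorphisms, a dense open $U\subseteq\theta(A)$ forcing $\theta(A)=\overline{\theta(A)}$, and the characteristic-zero fact that a bijective homomorphism of algebraic groups is an isomorphism. The only difference is cosmetic and lies in the last topological step: you use $U\cap xU^{-1}\neq\emptyset$, which needs only density and not irreducibility, whereas the paper argues that $\theta(A)$ is an open subgroup of its closure and hence closed.
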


\begin{proof}
Since $A$ is an algebraic subgroup of $G$, it is contained in some filtration
piece $G_k$. Since $\theta$ is a morphism of ind-groups,
$\theta(A)\subseteq G'_\ell$ for some $\ell$, so $\theta(A)$ is an abstract
subgroup of $G'$ that is a constructible subset of the algebraic variety
$G'_\ell$ (by Chevalley's theorem on images of morphisms). Its closure
$Z:=\overline{\theta(A)}$ in $G'_\ell$ is a closed subgroup: since $G'$ is an
ind-group, translations and inversion are isomorphisms of ind-varieties, hence
homeomorphisms, so
$hZ=h\overline{\theta(A)}=\overline{h\theta(A)}=\overline{\theta(A)}=Z$ for
every $h\in\theta(A)$, and $Z^{-1}=\overline{\theta(A)^{-1}}=Z$. Hence
$\theta(A)Z=Z$, and for every $z\in Z$ right translation by $z$ gives
$Zz=\overline{\theta(A)}\,z=\overline{\theta(A)z}\subseteq\overline{Z}=Z$, so
$ZZ\subseteq Z$ and $Z$ is a closed subgroup of $G'_\ell$. Since $\theta(A)$ is
constructible and dense in the noetherian space $Z$, it contains a nonempty open
subset of $Z$ \cite[Proposition~AG.1.3]{Bor91}; then $\theta(A)$, as a union of
$\theta(A)$-translates of that open subset, is open in $Z$. An open subgroup of
a topological group is closed, so $\theta(A)=Z$ is a closed algebraic subgroup
of $G'_\ell$, hence an algebraic subgroup of $G'$.

It is connected and commutative if $A$ is, being the image of $A$ under a group
homomorphism. Since $\theta$ is injective, $\theta|_A\colon A\to\theta(A)$ is a
bijective homomorphism of algebraic groups, hence an isomorphism in
characteristic zero \cite[Corollary~5.3.3]{Spr98}.
\end{proof}
The following result is standard for irreducible affine varieties with a torus action
(\cite[Section~1.2 and Theorem~2.7]{Lie10}, \cite[Proposition~4.3]{LRU23}, and
\cite[Lemma~4.9]{DLMFR24} for finite-dimensional automorphism groups of monomial algebras).
Since those references assume either irreducibility or finite-dimensionality of the
automorphism group, neither of which holds here, we include the proof.
\begin{proposition}\label{prop:root-subgroups}
The weights of the root subgroups of $\Aut(X_\Delta)$ with respect to $T_0$ are exactly the Demazure roots $\alpha\in\mathcal{R}(\Delta)$.
\end{proposition}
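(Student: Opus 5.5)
The proof has two directions, and I would handle each through derivations.

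\emph{Every root is a weight.} Fix $\alpha=\beta-\mathbf{e}_i\in\mathcal{R}(\Delta)$. By \cref{thm:roots-agree} (via \cref{prop:roots-facets}) the derivation $\partial:=\overline{\mathbf{x}}^{\beta}\,\partial/\partial\overline{x}_i$ is nonzero and locally nilpotent. Set $\varepsilon(s):=\exp(s\partial)$. Since $\beta_i=0$, this is the explicit automorphism $\overline{x}_i\mapsto\overline{x}_i+s\overline{\mathbf{x}}^\beta$, $\overline{x}_j\mapsto\overline{x}_j$ for $j\neq i$. Its inverse is obtained by replacing $s$ with $-s$, and both have degree bounded by $\max(1,|\beta|)$. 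Hence $s\mapsto\varepsilon(s)$ is a morphism $\GA\to\Aut(X_\Delta)_{\le d}$. It is injective because $\partial\neq0$, so its image $U_\alpha$ is a closed algebraic subgroup isomorphic to $\GA$; one can use \cref{lem:image-algebraic} or check directly that the image is closed. A direct computation on generators gives $t\,\partial\,t^{-1}=\chi^{\alpha}(t)\,\partial$ for $t\in T_0$, because $\partial$ is $\ZZ^n$-homogeneous of degree $\alpha$. Therefore $t\varepsilon(s)t^{-1}=\varepsilon(\chi^\alpha(t)s)$. Finally $\alpha\neq0$ by \cref{lem:root-data}, so the character is nontrivial.

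\emph{Every weight is a root.} Let $U$ be a root subgroup with $\varepsilon\colon\GA\to U$ and weight $\alpha\in X(T_0)=\ZZ^n$. Since $U$ is an algebraic subgroup, it acts algebraically on $X_\Delta$, as noted in \cref{subsec:ind}. The corresponding $\GA$-action is $\exp(s\partial)$ for a nonzero locally nilpotent derivation $\partial$, namely the derivative at $s=0$ of $\varepsilon(s)^*$. This uses the correspondence between $\GA$-actions and LNDs recalled in \cref{subsec:der-roots}. Conjugating by $t$ and differentiating the relation $t\varepsilon(s)t^{-1}=\varepsilon(\alpha(t)s)$ gives $t\,\partial\,t^{-1}=\alpha(t)\partial$ for every $t\in T_0$. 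Now decompose $\partial=\sum_\gamma\partial_\gamma$ into $\ZZ^n$-homogeneous parts. Since $t\partial_\gamma t^{-1}=\chi^\gamma(t)\partial_\gamma$ and distinct characters of $T_0$ are linearly independent, all components with $\gamma\neq\alpha$ vanish. Thus $\partial=\partial_\alpha$ is a nonzero homogeneous locally nilpotent derivation of degree $\alpha$, and \cref{thm:roots-agree} gives $\alpha\in\mathcal{R}(\Delta)$.

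\emph{Main obstacle.} The delicate step is the identification of the abstract root subgroup $U$ with an exponential of an LND, in a form compatible with the $T_0$-conjugation. Several things must be justified:
\begin{itemize}
\item the map $s\mapsto\varepsilon(s)^*(f)$ is polynomial in $s$ for each $f$, because $\KK[\Delta]$ is a rational $U$-module, as in the proof of \cref{lem:components};
\item its $s$-derivative $\partial$ is a locally nilpotent derivation with $\varepsilon(s)^*=\exp(s\partial)$, which is the standard argument for rational $\GA$-modules;
\item $\partial\neq0$, since $\varepsilon$ is injective;
\item the action convention (left versus right, i.e.\ comorphisms) does not flip $\alpha$ to $-\alpha$.
\end{itemize}
For the last point I would fix conventions by checking against the first direction, where the weight of $U_\alpha$ came out as $\alpha$ itself. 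The remaining steps are formal.
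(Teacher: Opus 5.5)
Your proposal is correct and follows the paper's proof essentially step for step. For one direction you use the explicit exponential $\overline{x}_i\mapsto\overline{x}_i+s\,\overline{\mathbf{x}}^\beta$, which lands in the filtration piece of degree $\max(1,|\beta|)$, together with the conjugation identity $t\,\partial\,t^{-1}=t^\alpha\partial$. For the converse you pass to a locally nilpotent derivation, differentiate the root-subgroup relation, use linear independence of characters to force homogeneity, and conclude by \cref{thm:roots-agree}. The points you list as the main obstacle are exactly the ones the paper delegates to the standard correspondence between $\GA$-actions and locally nilpotent derivations, so nothing is missing.
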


\begin{proof}
Let $\alpha\in\mathcal{R}(\Delta)$.
Since $\mathcal{R}(\Delta)$ is defined combinatorially
(\cref{def:roots-comb}), we must first convert $\alpha$ into an actual derivation: by
\cref{thm:roots-agree} we may write $\alpha=\beta-\mathbf{e}_i$ with $\beta\in\ZZ^n_{\geq0}$,
$\beta_i=0$, in such a way that
$\partial:=\lambda\,\overline{\mathbf{x}}^\beta\,\partial/\partial\overline{x}_i$ is a nonzero
$\ZZ^n$-homogeneous locally nilpotent derivation of $\KK[\Delta]$ of degree $\alpha$, for every
$\lambda\in\KK^*$.
Since $\beta_i=0$, one has $\partial^2(\overline{x}_j)=0$ for every
generator $\overline{x}_j$, so
$u_s:=\exp(s\partial)$ satisfies
$u_s(\overline{x}_j)=\overline{x}_j+s\lambda\delta_{ij}\,
\overline{\mathbf{x}}^\beta$
and defines a $\GA$-action.
Set $d:=\max(1,|\beta|)$ and let $A_{\leq d}$ denote the subspace
of $A=\KK[\Delta]$ spanned by monomials of degree at most $d$.
The map $\iota\colon s\mapsto(u_s,u_{-s})$ is an injective
affine-linear map $\GA\to\bigl((A_{\leq d})^n\bigr)^2$
(only the coefficient of $\overline{\mathbf{x}}^{\beta}$ in the
$i$-th component varies), hence a closed immersion.
Since $u_s^{-1}=u_{-s}$, the image lies in the filtration piece
$\Aut(X_\Delta)\cap\bigl((A_{\leq d})^n\bigr)^2$ and is closed
there; thus $U:=\{u_s : s\in\KK\}$ is a closed algebraic subgroup
of $\Aut(X_\Delta)$ isomorphic to $\GA$.
For $t\in T_0$ and a monomial $\overline{\mathbf{x}}^a$,
$$
(t\circ\partial\circ t^{-1})(\overline{\mathbf{x}}^a)
=t^{-a}\,t\cdot\partial(\overline{\mathbf{x}}^a)
=t^{\alpha}\,\partial(\overline{\mathbf{x}}^a),
$$
so $t\partial t^{-1}=t^\alpha\partial$ and hence
$t\,u_s\,t^{-1}=\exp(s\,t^\alpha\partial)=u_{t^\alpha s}$;
as $\alpha\neq0$, $U$ is a root subgroup of weight $\alpha$.

Conversely, let $(U,\varepsilon)$ be a root subgroup of weight
$\alpha$. Since $U$ is an algebraic subgroup of $\Aut(X_\Delta)$,
it acts algebraically on $X_\Delta$ (see \cref{subsec:ind}),
so $\varepsilon$ defines a $\GA$-action, which by the correspondence
between $\GA$-actions and locally nilpotent derivations
(cf.\ \cite[Remark~3.12]{DL24}) equals
$\varepsilon(s)=\exp(s\partial)$ for a nonzero locally nilpotent
derivation $\partial$.
For $t\in T_0$, the root subgroup condition gives
$t\varepsilon(s)t^{-1}=\varepsilon(t^\alpha s)$ for all $s$,
i.e.,
$\exp(s\,t\partial t^{-1})=\exp(s\,t^\alpha\partial)$;
comparing the coefficients of $s$ in
$\exp(s\,\cdot\,)(f)=f+s(\cdot)(f)+\cdots$
yields
$t\partial t^{-1}=t^\alpha\partial$ for all $t\in T_0$.
Decomposing $\partial=\sum_\gamma\partial_\gamma$ into
$\ZZ^n$-homogeneous components
(see \cref{subsec:der}) and using
$t\partial_\gamma t^{-1}=t^\gamma\partial_\gamma$, we obtain
$\sum_\gamma t^\gamma\partial_\gamma
=\sum_\gamma t^\alpha\partial_\gamma$ for all $t$;
since distinct characters of $T_0$ are linearly independent,
$\partial_\gamma=0$ for every $\gamma\neq\alpha$,
so $\partial=\partial_\alpha$ is homogeneous of degree $\alpha$.
By \cref{cor:root-form}, $\partial$ has the
form $\lambda'\overline{\mathbf{x}}^{\beta'}\,
\partial/\partial\overline{x}_{i'}$, and $\alpha\in\mathcal{R}(\Delta)$
by \cref{thm:roots-agree}.
\end{proof}

The following elementary combinatorial lemma will be used to compare two Stanley-Reisner presentations of the same algebra.

\begin{lemma}\label{lem:matching}
Let $(A_1,\dots,A_r)$ and $(B_1,\dots,B_r)$ be lists of subsets of $[n]$ with $\bigcup_t A_t=\bigcup_t B_t=[n]$ and
$$
\Bigl|\bigcap_{t\in I}A_t\Bigr|=\Bigl|\bigcap_{t\in I}B_t\Bigr|
\qquad\text{for every nonempty } I\subseteq[r].
$$
Then there is a permutation $\sigma\in S_n$ with $\sigma(A_t)=B_t$ for every $t$.
\end{lemma}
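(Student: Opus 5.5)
The plan is to use the Venn-region (atom) decomposition. For each vertex $j\in[n]$ define its \emph{type} $\tau_A(j):=\{t\in[r]: j\in A_t\}$, and define $\tau_B(j)$ in the same way. The covering hypothesis $\bigcup_tA_t=[n]$ says that every type is nonempty. For a nonempty $J\subseteq[r]$ put
$$
a_J:=\bigl|\{j: \tau_A(j)=J\}\bigr|,\qquad b_J:=\bigl|\{j:\tau_B(j)=J\}\bigr|.
$$
It suffices to show $a_J=b_J$ for every nonempty $J$. Once this holds, choose for each $J$ a bijection from the $A$-atom of type $J$ onto the $B$-atom of type $J$, and let $\sigma$ be the union of these bijections. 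Since the atoms partition $[n]$ on both sides (every type being nonempty), $\sigma$ is a permutation of $[n]$ that preserves types. Because $A_t$ is the union of the atoms of types containing $t$, and likewise for $B_t$, this gives $\sigma(A_t)=B_t$ for every $t$.

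The main step is the equality of atom sizes, which I will obtain by Möbius inversion. For nonempty $I$ we have
$$
\Bigl|\bigcap_{t\in I}A_t\Bigr|=\sum_{J\supseteq I}a_J,
$$
and the analogous identity holds for $B$. Inverting on the Boolean lattice of subsets of $[r]$ gives
$$
a_J=\sum_{I\supseteq J}(-1)^{|I\setminus J|}\Bigl|\bigcap_{t\in I}A_t\Bigr|
$$
for every nonempty $J$. This formula uses only intersections over nonempty index sets $I\supseteq J$, and all of these are equal by hypothesis, so $a_J=b_J$. For a more elementary argument one can instead run a downward induction on $|J|$: $a_{[r]}$ is the size of the full intersection, and $a_J=|\bigcap_{t\in J}A_t|-\sum_{J'\supsetneq J}a_{J'}$.

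The only delicate point, and the reason the covering hypothesis is needed, is the empty type $J=\emptyset$. Its count would require knowing $|\bigcap_{t\in\emptyset}A_t|=n$ together with the knowledge that the atoms exhaust $[n]$. The hypothesis $\bigcup A_t=\bigcup B_t=[n]$ gives $a_\emptyset=b_\emptyset=0$ directly, so no vertex is left unmatched and the bijections glue to a genuine element of $S_n$. The rest of the argument is routine.
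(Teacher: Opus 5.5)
Your proposal is correct and is essentially the paper's proof. Both arguments compute the size of each atom of nonempty type $J$ by inclusion--exclusion over the intersections $\bigcap_{t\in K}A_t$ with $K\supseteq J$, and then glue bijections between atoms of equal type into $\sigma$. One aside: your remark that the covering hypothesis is \emph{needed} overstates things. In fact $a_\emptyset=n-|\bigcup_tA_t|$, and $|\bigcup_tA_t|$ is itself determined by the nonempty intersection sizes via inclusion--exclusion, so $a_\emptyset=b_\emptyset$ holds anyway; the hypothesis is merely convenient, in your argument as in the paper's.
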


\begin{proof}
For $v\in[n]$ let $s_A(v):=\{t : v\in A_t\}$, which is nonempty by the covering assumption, and define $s_B$ likewise. For every nonempty $J\subseteq[r]$, inclusion--exclusion gives
$$
\#\{v : s_A(v)=J\}=\sum_{K\supseteq J}(-1)^{|K\setminus J|}\Bigl|\bigcap_{t\in K}A_t\Bigr|,
$$
and the same formula for $B$; by hypothesis the two counts agree for every $J$. Choosing a bijection $\sigma$ of $[n]$ that matches, for each $J$, the set $\{v: s_A(v)=J\}$ with $\{v: s_B(v)=J\}$, we get $s_B(\sigma(v))=s_A(v)$ for all $v$, that is, $v\in A_t$ if and only if $\sigma(v)\in B_t$, i.e., $\sigma(A_t)=B_t$.
\end{proof}

\begin{theorem}\label{thm:tori}
Let $\Delta$ be a simplicial complex on $[n]$.
\begin{enumerate}[label=\textup{(\alph*)}]
\item\label{tori.a} The standard torus $T_0=(\KK^*)^n$ embeds into $\Aut(X_\Delta)$ as a closed algebraic subgroup.
\item\label{tori.b} Every algebraic torus $D\subseteq\Aut(X_\Delta)$ satisfies $\dim D\leq n$.
\item\label{tori.c} Every algebraic torus of dimension $n$ in $\Aut(X_\Delta)$ is conjugate to $T_0$.
\end{enumerate}
\end{theorem}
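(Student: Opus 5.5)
The plan for \ref{tori.a} is a direct check. The action $t\cdot\overline{x}_i=t_i\overline{x}_i$ preserves $I_\Delta$, since $I_\Delta$ is monomial. The map $t\mapsto(t,t^{-1})$ lands in the degree-$1$ filtration piece and is a closed immersion there, because it is the restriction of the diagonal embedding $(\KK^*)^n\hookrightarrow(A_{\leq1})^n$. For \ref{tori.b} and \ref{tori.c} I would linearize a torus $D$ at a fixed point of the cone locus.

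\emph{Step 1: find a fixed point.} By \cref{lem:components}, $D$ preserves $Y_\Delta\cong\AF^{|c(\Delta)|}$. If $c(\Delta)=\emptyset$, then $Y_\Delta=\{0\}$ and the origin is fixed. In general I would invoke the Bia\l ynicki-Birula theorem that a torus acting algebraically on an affine space has a fixed point; this gives $y_0\in Y_\Delta$ with $D y_0=y_0$. I expect this to be the main obstacle. If one wants to avoid it, the fallback is to use \eqref{eq:cone-ring}, write $X_\Delta=X_\Gamma\times\AF^{c}$, and treat the cone factor separately. The difficulty is that $D$ need not respect the product structure, so I would first try the fixed-point theorem.

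\emph{Step 2: faithful linear representation.} Set $\mathfrak{m}:=\mathfrak{m}_{y_0}$, which is $D$-stable. Then $D$ acts rationally on each $\KK[\Delta]/\mathfrak{m}^k$ and on $\mathfrak{m}/\mathfrak{m}^2\cong\KK^n$ (\cref{lem:dim}). Let $K\subseteq D$ be the kernel of the action on $\mathfrak{m}/\mathfrak{m}^2$. Since $\mathfrak{m}^k/\mathfrak{m}^{k+1}$ is a quotient of $\mathrm{Sym}^k(\mathfrak{m}/\mathfrak{m}^2)$, $K$ acts trivially on every graded piece. Being diagonalizable, $K$ therefore acts trivially on every $\KK[\Delta]/\mathfrak{m}^k$. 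Hence $g^*f-f\in\bigcap_k\mathfrak{m}^k=0$ for $g\in K$, by \cref{lem:dim}. So $D\hookrightarrow\GL_n$ is a torus in $\GL_n$, and $\dim D\leq n$, which proves \ref{tori.b}.

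\emph{Step 3: the case $\dim D=n$.} Now $D$ acts on $\mathfrak{m}/\mathfrak{m}^2$ with $n$ linearly independent weights $\chi_1,\dots,\chi_n$. Using complete reducibility, lift an eigenbasis to $D$-eigenvectors $y_1,\dots,y_n\in\mathfrak{m}$. Identifying $D\cong(\KK^*)^n$ via the $\chi_i$, every weight of $\KK[\Delta]/\mathfrak{m}^k$ lies in $\ZZ^n_{\geq0}$ with total degree below $k$. Using $\bigcap_k\mathfrak{m}^k=0$ and rationality, this shows the $D$-weights of $\KK[\Delta]$ lie in $\ZZ^n_{\geq0}$ with finite-dimensional weight spaces and weight $0$ equal to $\KK$. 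Graded Nakayama then shows that the $D$-equivariant map $\pi\colon\KK[\mathbf{x}]\to\KK[\Delta]$, $x_i\mapsto y_i$, is surjective. Its kernel is $D$-stable, and the monomials have pairwise distinct weights, so the kernel is a monomial ideal. It is radical because $\KK[\Delta]$ is reduced, hence squarefree, so it equals $I_{\Delta'}$ for some $\Delta'$. It has no linear generators, since $\dim\mathfrak{m}/\mathfrak{m}^2=n$ forces the $y_i$ to be linearly independent modulo $\mathfrak{m}^2$. Thus $\pi$ induces $\psi\colon\KK[\Delta']\xrightarrow{\sim}\KK[\Delta]$ carrying the standard torus of $\Delta'$ onto $D$.

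\emph{Step 4: comparing $\Delta'$ with $\Delta$.} The minimal primes correspond under $\psi$, and by \eqref{eq:intersections} the dimension of any intersection of components is $|\bigcap_{t\in I}F_t|$ on either side. \cref{lem:matching} applied to the two facet lists, which cover $[n]$ by the standing convention, gives $\sigma\in S_n$ with $\sigma\cdot\Delta'=\Delta$. Then $\varphi:=\psi\circ\rho_\sigma^{-1}\in\Aut(\KK[\Delta])$ maps each $\overline{x}_i$ to a $D$-eigenvector. Hence $\varphi^{-1}D\varphi$ acts diagonally on the $\overline{x}_i$ with independent characters, so it lies in $T_0$ and equals $T_0$ by dimension, which proves \ref{tori.c}.
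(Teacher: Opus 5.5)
Your proposal is correct and follows the paper's proof essentially step for step. The shared steps are a Bia\l ynicki-Birula fixed point $y_0$ on the cone locus, faithfulness of $D$ on $\mathfrak{m}_{y_0}/\mathfrak{m}_{y_0}^2$ (unipotent-and-semisimple together with $\bigcap_k\mathfrak{m}_{y_0}^k=0$), lifted eigenvectors generating $\KK[\Delta]$ by induction on total weight (your graded Nakayama), a monomial radical kernel giving a Stanley-Reisner presentation, and \cref{lem:matching} to relabel and conjugate $D$ onto $T_0$; the one point you assert rather than justify is that injectivity of $D\to\GL(\mathfrak{m}/\mathfrak{m}^2)$ makes $\chi_1,\dots,\chi_n$ a $\ZZ$-basis of $X(D)$, which your identification $D\cong(\KK^*)^n$ needs and which the paper records explicitly.
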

\begin{proof}
We first prove~\ref{tori.a}. The torus $T_0=(\KK^*)^n$ acts on $\KK[\mathbf{x}]$ by $t\cdot x_l=t_lx_l$; this action preserves every monomial ideal, and since $I_\Delta$ has no linear generators the induced action on $\KK[\Delta]$ is faithful, yielding an embedding $T_0\hookrightarrow\Aut(X_\Delta)$. In the coordinates of \cref{subsec:ind}, the image of $t=(t_1,\ldots,t_n)\in T_0$ is the pair $(\varphi_t,\varphi_t^{-1})$ where $\varphi_t(\overline{x}_l)=t_l\overline{x}_l$ and $\varphi_t^{-1}(\overline{x}_l)=t_l^{-1}\overline{x}_l$; this is cut out by polynomial equations inside each filtration piece, hence closed.

We now show~\ref{tori.b}. Let $D\subseteq\Aut(X_\Delta)$ be an algebraic torus. Being connected, $D$ preserves every irreducible component of $X_\Delta$ (\cref{lem:components}) and therefore acts algebraically on the cone locus $Y_\Delta\cong\AF^{|c(\Delta)|}$ of \cref{lem:origin}. By \cite[Theorem~1]{BB66}, this action has a fixed point $y_0\in Y_\Delta\subseteq X_\Delta$; when $\Delta$ has no cone vertex, $Y_\Delta=\{0\}$ and necessarily $y_0=0$. Write $\mathfrak{m}:=\mathfrak{m}_{y_0}$ for the maximal ideal of $y_0$; by \cref{lem:dim}, $\dim_\KK\mathfrak{m}/\mathfrak{m}^2=n$ and $\bigcap_k\mathfrak{m}^k=0$. Since $y_0$ is fixed by $D$, the ideals $\mathfrak{m}$ and $\mathfrak{m}^2$ are $D$-stable, so $D$ acts on $V:=\mathfrak{m}/\mathfrak{m}^2$, and we obtain a homomorphism $\rho\colon D\to\GL(V)$.

We claim that $\rho$ is injective. Set $z_l:=\overline{x}_l - \overline{x}_l(y_0)\in\mathfrak{m}$, so that $z_1,\ldots,z_n$ generate $\mathfrak{m}$ and span $V$ modulo $\mathfrak{m}^2$. Let $\sigma\in D$ with $\rho(\sigma)=\operatorname{id}$, i.e., $\sigma(z_l)\equiv z_l\pmod{\mathfrak{m}^2}$. Writing $\sigma(z_l)=z_l+g_l$ with $g_l\in\mathfrak{m}^2$ and expanding products, $\sigma$ acts trivially on each quotient $\mathfrak{m}^k/\mathfrak{m}^{k+1}$; hence $\sigma$ is unipotent on the finite-dimensional $D$-invariant quotient $\KK[\Delta]/\mathfrak{m}^k$ for every $k$. Since $D$ is a torus acting rationally, $\sigma$ is also semisimple there, so $\sigma=\operatorname{id}$ on $\KK[\Delta]/\mathfrak{m}^k$ for all $k$. As $\bigcap_k\mathfrak{m}^k=0$ (\cref{lem:dim}), we conclude $\sigma=\operatorname{id}$. Therefore $\rho(D)$ is a torus in $\GL_n(\KK)$ and $\dim D\leq n$.

We finally prove~\ref{tori.c}, following \cite[Theorem~2]{BB66} (see also \cite[Proposition~10.5.4]{FK18} for the normal case); since $X_\Delta$ is reducible, we include the details. Let $D$ be a torus of dimension $n$; by~\ref{tori.b} it is maximal. Since $D$ is an algebraic subgroup, $\KK[\Delta]$ is a rational $D$-module, that is, every element of $\KK[\Delta]$ belongs to a finite-dimensional $D$-stable subspace (see \cite[Proposition~2.3.6]{Spr98}); as $D$ is a torus, $\KK[\Delta]$ decomposes into weight spaces $\KK[\Delta]=\bigoplus_{\lambda\in X(D)}\KK[\Delta]_\lambda$, compatibly with the algebra structure (\cite[Theorem~3.2.3]{Spr98}). The fixed point $y_0\in Y_\Delta$ and the maximal ideal $\mathfrak{m}:=\mathfrak{m}_{y_0}$ are as in the proof of~\ref{tori.b}. Diagonalizing $\rho(D)$ in $\GL(V)$, we obtain a basis $\overline{y}_1,\ldots,\overline{y}_n$ of $V=\mathfrak{m}/\mathfrak{m}^2$ consisting of $D$-eigenvectors with weights $\chi_1,\ldots,\chi_n\in X(D)$; since $\rho$ is an isomorphism onto a maximal torus of $\GL_n(\KK)$, the characters $\chi_1,\ldots,\chi_n$ form a $\ZZ$-basis of $X(D)$. Since every rational representation of a diagonalizable group is a direct sum of one-dimensional representations (\cite[Theorem~3.2.3(c)]{Spr98}), the exact sequence of rational $D$-modules $0\to\mathfrak{m}^2\to\mathfrak{m}\to V\to0$ splits, and the $\overline{y}_i$ lift to $D$-eigenvectors $y_i\in\mathfrak{m}$ of weight $\chi_i$.

From here the identity $\KK[\Delta]=\KK[y_1,\dots,y_n]$ is the standard argument of
Bia\l ynicki-Birula \cite[Theorem~2]{BB66} (see \cite[Proposition~10.5.4]{FK18}): the weights of
$\KK[\Delta]$ lie in the pointed cone $C:=\ZZ_{\geq0}\chi_1+\dots+\ZZ_{\geq0}\chi_n$ and
$\KK[\Delta]_0=\KK$, and induction on $|\lambda|:=\sum_ia_i$ for $\lambda=\sum_ia_i\chi_i\in C$
gives $\mathfrak{m}_\lambda\subseteq\KK[y_1,\dots,y_n]$, whence
$\KK[\Delta]=\KK\oplus\mathfrak{m}=\KK[y_1,\dots,y_n]$. It uses only
$\dim_\KK\mathfrak{m}/\mathfrak{m}^2=n$, $\bigcap_k\mathfrak{m}^k=0$ (\cref{lem:dim}) and the
rationality of the $D$-module $\KK[\Delta]$, none of which involves irreducibility. What is
specific to our setting begins now.

The presentation by the $y_i$ is again of Stanley-Reisner type. Let $\widetilde{\Phi}\colon\KK[\mathbf{x}]\twoheadrightarrow\KK[\Delta]$, $x_i\mapsto y_i$, surjective by the previous paragraph. Grade $\KK[\mathbf{x}]$ by $X(D)$ with $\deg x_i:=\chi_i$; since the $\chi_i$ form a $\ZZ$-basis, every graded piece of $\KK[\mathbf{x}]$ is either zero or the line spanned by a single monomial. The map $\widetilde{\Phi}$ is graded, so $\ker\widetilde{\Phi}$ is a graded, hence monomial, ideal. Moreover $\KK[\mathbf{x}]/\ker\widetilde{\Phi}\cong\KK[\Delta]$ is reduced, so $\ker\widetilde{\Phi}$ is radical: if $\mathbf{x}^a\in\ker\widetilde{\Phi}$ and $S=\supp(a)$, then $\mathbf{x}^{m\mathbf{e}_S}=\mathbf{x}^{m\mathbf{e}_S-a}\mathbf{x}^a\in\ker\widetilde{\Phi}$ for $m:=\max_ia_i$, hence $\mathbf{x}^{\mathbf{e}_S}\in\ker\widetilde{\Phi}$. Setting $\Gamma:=\{A\subseteq[n] : \mathbf{y}^{\mathbf{e}_A}:=\prod_{l\in A}y_l\neq0\}$, we verify that $\Gamma$ is a simplicial complex on $[n]$ and that $\ker\widetilde{\Phi}=I_\Gamma$. First, $\Gamma$ is closed under taking subsets: if $A\in\Gamma$ and $B\subseteq A$, then $\mathbf{y}^{\mathbf{e}_A}=\mathbf{y}^{\mathbf{e}_B}\cdot\mathbf{y}^{\mathbf{e}_{A\setminus B}}\neq0$, so $\mathbf{y}^{\mathbf{e}_B}\neq0$ and $B\in\Gamma$. Since the empty product equals $1\neq0$, $\emptyset\in\Gamma$, and since $y_i\neq0$ for each $i$, $\Gamma$ contains all singletons. Second, since $\ker\widetilde{\Phi}$ is a monomial radical ideal, it is generated by squarefree monomials; a squarefree monomial $\mathbf{x}^{\mathbf{e}_S}$ belongs to $\ker\widetilde{\Phi}$ if and only if $\prod_{l\in S}y_l=0$, i.e., $S\notin\Gamma$, which is precisely the generating set of $I_\Gamma$. Thus $\widetilde{\Phi}$ induces an isomorphism $\phi\colon\KK[\Gamma]\xrightarrow{\ \sim\ }\KK[\Delta]$ with $\phi(\overline{x}_i)=y_i$.

We next show that $\Gamma$ is a relabeling of $\Delta$. The minimal primes of $\KK[\Delta]$ are the $\mathfrak{q}_F$, $F\in\mathcal{F}(\Delta)$, and, transported through $\phi$, also the ideals $(y_l : l\notin G)$, $G\in\mathcal{F}(\Gamma)$. Hence there is a bijection $\mathcal{F}(\Delta)\to\mathcal{F}(\Gamma)$, $F\mapsto G(F)$, with
$$
\mathfrak{q}_F=(y_l : l\notin G(F)).
$$
For a nonempty $\mathcal{S}\subseteq\mathcal{F}(\Delta)$, computing $\KK[\Delta]/\sum_{F\in\mathcal{S}}\mathfrak{q}_F$ with \eqref{eq:intersections} in the presentation by the $\overline{x}_i$ gives a polynomial ring of dimension $|\bigcap_{F\in\mathcal{S}}F|$, and through $\phi$ one of dimension $|\bigcap_{F\in\mathcal{S}}G(F)|$. Hence
$$
\Bigl|\bigcap_{F\in\mathcal{S}}F\Bigr|=\Bigl|\bigcap_{F\in\mathcal{S}}G(F)\Bigr|
\qquad\text{for every nonempty }\mathcal{S}\subseteq\mathcal{F}(\Delta).
$$
Both families of facets cover $[n]$, since all singletons are faces of $\Delta$ and of $\Gamma$. By \cref{lem:matching} there is $\sigma\in S_n$ with $\sigma(F)=G(F)$ for all $F$, whence $\mathcal{F}(\Gamma)=\{\sigma(F): F\in\mathcal{F}(\Delta)\}$ and $\Gamma=\sigma\cdot\Delta:=\{\sigma(A): A\in\Delta\}$.

The conjugacy follows. The permutation automorphism of $\KK[\mathbf{x}]$, $x_i\mapsto x_{\sigma(i)}$, maps $I_\Delta$ onto $I_{\sigma\cdot\Delta}=I_\Gamma$ and induces an isomorphism $\overline{p}_\sigma\colon\KK[\Delta]\to\KK[\Gamma]$. Set $\Psi:=\phi\circ\overline{p}_\sigma\in\Aut_\KK(\KK[\Delta])$, so that $\Psi(\overline{x}_i)=y_{\sigma(i)}$. For $d\in D$,
$$
(\Psi^{-1}\circ d\circ\Psi)(\overline{x}_i)=\Psi^{-1}\bigl(\chi_{\sigma(i)}(d)\,y_{\sigma(i)}\bigr)=\chi_{\sigma(i)}(d)\,\overline{x}_i,
$$
so $\Psi^{-1}D\Psi\subseteq T_0$. Define $\iota\colon D\to T_0$ by $\iota(d):=(\chi_{\sigma(1)}(d),\dots,\chi_{\sigma(n)}(d))$. Since each $\chi_{\sigma(i)}$ is a character of $D$, the map $\iota$ is a homomorphism of algebraic groups. Since $\sigma$ is a permutation, $\chi_{\sigma(1)},\dots,\chi_{\sigma(n)}$ is a $\ZZ$-basis of $X(D)$, so $\iota$ is an isomorphism $D\xrightarrow{\sim}(\KK^*)^n$ (see the proof of $(3)\Rightarrow(1)$ of proposition in \cite[Section~8.5]{Bor91}). Its image is a closed subtorus of $T_0$ of dimension $n$, hence all of $T_0$. Therefore $\Psi^{-1}D\Psi=T_0$.

\end{proof}

\begin{example}\label{ex:same-aut}
The fully exposed hypothesis is sharp in the strongest possible sense: for the complexes of \cref{ex:rigid-invisible}, not only the root sets but the full automorphism ind-groups agree. \rchange{\cref{fig:same-aut} records the component structure the proof below manipulates: each shaded wedge is an irreducible component $V_F\cong\AF^2$ and each ray a coordinate line $V_{\{i\}}$. $X_{\Delta_1}$ is a chain of four planes, consecutive ones meeting in a line; $X_{\Delta_2}$ is a chain of three planes together with a line meeting them only at the origin. The two are not isomorphic, yet the argument produces the same automorphisms for both: a torus, one polynomial parameter at each end of the chain, and the reversal $\tau$.}

\begin{figure}[H]
\centering
\begin{tikzpicture}[scale=1.0, v/.style={circle,fill,inner sep=1.3pt}]
\begin{scope}
  \foreach \s/\t in {180/135,135/90,90/45,45/0}
     {\fill[gray!16] (0,0)--(\s:2.15)--(\t:2.15)--cycle;}
  \foreach \s/\t in {180/135,135/90,90/45,45/0}
     {\draw[gray!55] (\s:2.15)--(\t:2.15);}
  \foreach \a in {180,135,90,45,0} {\draw (0,0)--(\a:2.15);}
  \foreach \a/\l in {180/1,135/2,90/3,45/4,0/5}
     {\node[v] at (\a:2.15) {}; \node at (\a:2.45) {\small $\l$};}
  \node[v] at (0,0) {};
  \node at (0.30,-0.24) {\scriptsize $0$};
  \foreach \m/\l in {157.5/{12},112.5/{23},67.5/{34},22.5/{45}}
     {\node at (\m:1.32) {\scriptsize $V_{\l}$};}
  \node at (0,-2.15) {\small $X_{\Delta_1}$};
\end{scope}
\begin{scope}[xshift=6.6cm]
  \foreach \s/\t in {180/120,120/60,60/0}
     {\fill[gray!16] (0,0)--(\s:2.15)--(\t:2.15)--cycle;}
  \foreach \s/\t in {180/120,120/60,60/0}
     {\draw[gray!55] (\s:2.15)--(\t:2.15);}
  \foreach \a in {180,120,60,0} {\draw (0,0)--(\a:2.15);}
  \foreach \a/\l in {180/1,120/2,60/4,0/5}
     {\node[v] at (\a:2.15) {}; \node at (\a:2.45) {\small $\l$};}
  \draw (0,0)--(270:1.35);
  \node[v] at (270:1.35) {};
  \node at (-0.24,-1.62) {\small $3$};
  \node[v] at (0,0) {};
  \node at (0.30,-0.24) {\scriptsize $0$};
  \foreach \m/\l in {150/{12},90/{24},30/{45}}
     {\node at (\m:1.32) {\scriptsize $V_{\l}$};}
  \node at (0.48,-0.80) {\scriptsize $V_{3}$};
  \node at (0,-2.15) {\small $X_{\Delta_2}$};
\end{scope}
\end{tikzpicture}
\caption{The varieties $X_{\Delta_1}$ and $X_{\Delta_2}$.}
\label{fig:same-aut}
\end{figure}
\end{example}

\begin{proposition}\label{prop:same-aut}
Let $\Delta_1$ and $\Delta_2$ be the complexes of \cref{ex:rigid-invisible}. For parameters $t\in(\KK^{\times})^5$ and $h,g\in\KK[x]$, and for $\Delta\in\{\Delta_1,\Delta_2\}$, the formulas
$$
\varphi_{t,h,g}\colon\quad
\overline{x}_1\mapsto t_1\overline{x}_1,\quad
\overline{x}_2\mapsto t_2\overline{x}_2+\overline{x}_1h(\overline{x}_1),\quad
\overline{x}_3\mapsto t_3\overline{x}_3,\quad
\overline{x}_4\mapsto t_4\overline{x}_4+\overline{x}_5g(\overline{x}_5),\quad
\overline{x}_5\mapsto t_5\overline{x}_5
$$
define automorphisms of $\KK[\Delta]$, and, with $\tau:=\rho_{(1\,5)(2\,4)}$,
$$
\Aut(X_{\Delta_1})\;=\;\{\varphi_{t,h,g}\}\rtimes\langle\tau\rangle
\qquad\text{and}\qquad
\Aut(X_{\Delta_2})\;=\;\{\varphi_{t,h,g}\}\rtimes\langle\tau\rangle,
$$
given by the same formulas. In particular $\Aut(X_{\Delta_1})$ and $\Aut(X_{\Delta_2})$ are isomorphic as ind-groups, while $X_{\Delta_1}\not\cong X_{\Delta_2}$. Consequently, the fully exposed hypothesis cannot be removed from \cref{thm:main}.
\end{proposition}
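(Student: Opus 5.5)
The plan has three parts: check that the given formulas define automorphisms, show every automorphism has this form, and compare the two ind-groups.

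\emph{Step 1: the formulas give automorphisms.} For both complexes the minimal non-faces containing $1$ or $5$ are the pairs $\{1,j\}$, $j\neq2$, and $\{5,j\}$, $j\neq4$. The remaining minimal non-faces are the pairs among $\{2,3,4\}$ that are not edges; for $\Delta_2$ these are $\{2,3\},\{3,4\}$, and for $\Delta_1$ the pair $\{2,4\}$. Let $\varphi=\varphi_{t,h,g}$. Each image $\varphi(\overline x_j)$ lies in the ideal $(\overline x_j,\overline x_{j'})$, where $j'=1$ for $j=2$, $j'=5$ for $j=4$, and $j'=j$ otherwise. Checking that every generator $\mathbf{x}^\sigma$ maps into $I_\Delta$ therefore reduces to products such as $\overline x_1\overline x_5$, $\overline x_1\overline x_3$, $\overline x_1\overline x_4$, all of which vanish in both rings. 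For example, $\varphi(\overline x_2)\varphi(\overline x_4)$ is a combination of $\overline x_2\overline x_4,\overline x_2\overline x_5,\overline x_1\overline x_4,\overline x_1\overline x_5$; in $\Delta_2$ the first term survives, and in $\Delta_1$ it is killed. The inverse is triangular: $\varphi_{t,h,g}^{-1}=\varphi_{t^{-1},h',g'}$ with $h'$ and $g'$ computed explicitly. The reversal $\tau$ preserves both complexes.

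\emph{Step 2: every automorphism has this form (the main obstacle).} Let $\psi\in\Aut(X_\Delta)$. Neither complex has a cone vertex, so by \cref{lem:origin} $\psi$ fixes $0$ and permutes the components, preserving their dimensions.
\begin{enumerate}
\item First compose with $\tau$ if necessary so that $\psi$ fixes the "end" structure. The component incidence graph distinguishes the two planes $V_{12},V_{45}$: they are the ends of the chain (in $\Delta_2$ they meet only one other plane along a line). Hence $\psi$ either fixes or swaps them, and similarly for the rest.
\item Next write $\psi(\overline x_j)$ as a polynomial in the monomial basis. Since $\psi$ preserves each $\mathfrak q_F$ up to the facet permutation, $\psi(\overline x_1)$ must vanish on every component not containing the line $V_1$, and so on. For instance, $\psi(\overline x_1)$ lies in $\bigcap_{F\not\ni1}\mathfrak q_F$, which is spanned by the monomials supported on faces containing $1$; these are powers of $\overline x_1$ times powers of $\overline x_2$. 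Applying the analogous constraint to $\psi^{-1}$, together with invertibility of the linear part on $\mathfrak m/\mathfrak m^2$, should force $\psi(\overline x_1)=t_1\overline x_1$. This is the delicate point: a priori monomials $\overline x_1^a\overline x_2^b$ with $b\geq1$ could appear, and I will rule them out by restricting to $V_{23}$ (resp.\ the line $V_2$), where $\overline x_1$ vanishes. The same argument applied to $\overline x_2$ gives $\psi(\overline x_2)\in t_2\overline x_2+\overline x_1\KK[\overline x_1]$.
\item For the hidden middle vertices, use that $\overline x_3$ generates the ideal of functions vanishing off the components through vertex $3$. The image of that ideal is then the ideal attached to the image components, which forces $\psi(\overline x_3)=t_3\overline x_3$.
\end{enumerate}

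Alternatively, one could shortcut via derivations. Every one-parameter unipotent subgroup yields a locally nilpotent derivation, and by \cref{thm:roots-agree} these have degrees only $k\mathbf e_1-\mathbf e_2$ and $k\mathbf e_5-\mathbf e_4$. However, this argument does not cover the whole ind-group, so I prefer the direct coefficient analysis above.

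\emph{Step 3: the isomorphism of ind-groups.} The resulting descriptions coincide formula by formula: the group law is composition of the substitutions, and this uses only the relations among $\overline x_1,\overline x_2$ and $\overline x_4,\overline x_5$, which are the same in both rings. The identification $\varphi\mapsto\varphi$ respects the filtrations by degree, so it is an isomorphism of ind-groups. Finally, $X_{\Delta_1}\not\cong X_{\Delta_2}$ because the component dimensions differ ($\Delta_2$ has a one-dimensional component), or alternatively by \cite{BG96}.
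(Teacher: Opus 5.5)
Your Steps 1 and 3, and the reduction in Step 2(1) to an automorphism $\psi$ fixing every component, agree with the paper. The gap is in Step 2(2), at the point that carries the content of the proposition: showing that $\psi(\overline x_2)=t_2\overline x_2+\overline x_1h(\overline x_1)$.

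\textbf{The constraints you use are not enough.} You invoke preservation of every $\mathfrak q_F$, vanishing of $\psi(\overline x_j)$ on components not through $j$, invertibility of the linear part on $\mathfrak m/\mathfrak m^2$, and restriction to $V_{23}$ or to the line $V_2$. None of these can exclude mixed monomials $\overline x_1^a\overline x_2^b$ with $a,b\geq1$. Consider the endomorphism $\overline x_2\mapsto\overline x_2+\overline x_1\overline x_2$, with all other $\overline x_l$ fixed. For $r=1,2$ it preserves $I_{\Delta_r}$ and every minimal prime, has identity linear part, and restricts to the identity on every component other than $V_{12}$. Yet it is not surjective: on $V_{12}$ its image is $\KK[x_1,x_2(1+x_1)]\not\ni x_2$. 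So ``the same argument applied to $\overline x_2$'' does not go through.

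\textbf{What is missing.} You need a form of invertibility of $\psi|_{V_{12}}$ that detects nonlinear terms. The paper first gets $\psi(x_2)|_{V_{12}}=t_2x_2+x_1H(x_1,x_2)$ from the fixed line $V_2$. It then uses that the Jacobian determinant $t_1\bigl(t_2+x_1\,\partial H/\partial x_2\bigr)$ of an automorphism of $\AF^2$ is a nonzero constant, so $\partial H/\partial x_2=0$. An equivalent route: after rescaling $x_1$, $\psi|_{V_{12}}$ is a $\KK[x_1]$-algebra automorphism of $\KK[x_1][x_2]$. Hence it has the form $x_2\mapsto ux_2+c(x_1)$ with $u\in\KK[x_1]^\times=\KK^\times$, and reducing mod $x_1$ gives $u=t_2$ and $c(0)=0$.

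\textbf{A smaller problem with $\overline x_1$.} The mechanism you state is vacuous: every allowed monomial $\overline x_1^a\overline x_2^b$ with $a\geq1$ already vanishes on $V_{23}$ and on $V_2$, so restricting there gives no information. What works is the unit argument on $V_{12}$. Both $\psi$ and $\psi^{-1}$ preserve $(x_1)\subseteq\KK[x_1,x_2]$, so $\psi(x_1)=x_1p$ and $\psi^{-1}(x_1)=x_1q$. Then $x_1=x_1\,p\,\psi(q)$ in the domain $\KK[x_1,x_2]$ forces $p\in\KK^\times$.

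\textbf{Where the unit argument must be run.} Your remark about applying the constraint to $\psi^{-1}$ points toward this. But the cancellation has to happen on a component, not in $\KK[\Delta]$, where $\overline x_1$ and $\overline x_3$ are zero-divisors. The same applies to your claim that preserving $(\overline x_3)$ ``forces'' $\psi(\overline x_3)=t_3\overline x_3$. It also applies to $\overline x_2$ on the middle plane, which is what kills the $\overline x_2^b\overline x_3^c$ terms for $\Delta_1$.

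\textbf{Gluing.} Once the component-wise forms are known, the scalars must be matched along common lines. The local descriptions are then glued using $\bigcap_F\mathfrak q_F=0$, as in the paper.
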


\begin{proof}
We first verify that the formulas define automorphisms. The image of $\overline{x}_2$ is supported on the faces $\{2\}$ and $\{1\}$, and that of $\overline{x}_4$ on $\{4\}$ and $\{5\}$; consequently, the image of a minimal generator $\overline{x}_a\overline{x}_b$ of $I_{\Delta_r}$ is a combination of monomials supported on $\{a'\}\cup\{b'\}$ with $a'\in\pi(a)$ and $b'\in\pi(b)$, where $\pi(2)=\{1,2\}$, $\pi(4)=\{4,5\}$ and $\pi(c)=\{c\}$ otherwise. Running over the minimal non-faces of $\Delta_1$ (the pairs $\{1,3\},\{1,4\},\{1,5\},\{2,4\},\{2,5\},\{3,5\}$) and of $\Delta_2$ (the pairs $\{1,3\},\{1,4\},\{1,5\},\{2,3\},\{2,5\},\{3,4\},\{3,5\}$), all resulting pairs are again non-faces, so both ideals are preserved. The composition law
$$
\varphi_{t,h,g}\circ\varphi_{t',h',g'}=\varphi_{tt',\,h'',\,g''},\qquad
h''(x)=t_2'\,h(x)+t_1\,h'(t_1x),\quad
g''(x)=t_4'\,g(x)+t_5\,g'(t_5x),
$$
shows that these maps form a subgroup $G\cong(\KK[x]\oplus\KK[x])\rtimes T_0$ of $\Aut(X_{\Delta_r})$; and $\tau\in S(\Delta_r)$ for $r=1,2$ normalizes $G$, exchanging the parameters $h$ and $g$ and the coordinates of $t$ accordingly.

We now show that every automorphism is of this form. Fix $r$ and $\varphi\in\Aut(X_{\Delta_r})$. The components of $X_{\Delta_1}$ are the planes $V_{\{1,2\}},V_{\{2,3\}},V_{\{3,4\}},V_{\{4,5\}}$, consecutive ones meeting in a line and the remaining pairs only at the origin; those of $X_{\Delta_2}$ are the planes $V_{\{1,2\}},V_{\{2,4\}},V_{\{4,5\}}$ with the same pattern, together with the line $V_{\{3\}}$, which meets the planes only at the origin. In both cases $\varphi$ permutes the components preserving dimensions and the dimensions of pairwise intersections, hence induces an automorphism of a path with four, respectively three, nodes, fixing the line component when present; the induced permutation is therefore trivial or the reversal, and the reversal is realized by $\tau$. After composing with $\tau$ if necessary, $\varphi$ fixes every component, and hence fixes the lines $V_{\{2\}}$ and $V_{\{4\}}$ (and $V_{\{3\}}$), each being a pairwise intersection of components or a component.

On a middle plane ($V_{\{2,3\}}$ and $V_{\{3,4\}}$ for $\Delta_1$; $V_{\{2,4\}}$ for $\Delta_2$) both coordinate axes are among the fixed lines, so the restriction $\psi$ of $\varphi$ and its inverse preserve both coordinate ideals of the plane; writing $\psi(x_a)=x_a\,p$ and $\psi^{-1}(x_a)=x_a\,q$, we get $x_a=\psi(x_aq)=x_a\,p\,\psi(q)$, so $p\,\psi(q)=1$ and $p\in\KK[x_a,x_b]^{\times}=\KK^{\times}$: the restriction is diagonal. On the line $V_{\{3\}}$ of $X_{\Delta_2}$, $\varphi$ restricts to an automorphism of $\AF^1$ fixing the origin (\cref{lem:origin}), hence to a scaling. On the end plane $V_{\{1,2\}}\cong\Spec\KK[x_1,x_2]$ only the axis $V_{\{2\}}=\{x_1=0\}$ is forced to be fixed; the same unit argument applied to the ideal $(x_1)$ gives $\varphi(x_1)=t_1x_1$, and, since $\varphi$ induces a linear automorphism of the fixed line $V_{\{2\}}$, we may write $\varphi(x_2)=t_2x_2+x_1H(x_1,x_2)$. The Jacobian determinant of an automorphism of $\AF^2$ is a nonzero constant, and here it equals $t_1\bigl(t_2+x_1\,\partial H/\partial x_2\bigr)$, forcing $\partial H/\partial x_2=0$, i.e., $H=h(x_1)$. The end plane $V_{\{4,5\}}$ is treated symmetrically, yielding $\varphi(x_5)=t_5x_5$ and $\varphi(x_4)=t_4x_4+x_5g(x_5)$. Finally, the scalars attached to a vertex agree across the components containing it (restrict to the common line), and comparing restrictions to all components, using $\bigcap_F\mathfrak{q}_F=0$, identifies $\varphi$ with $\varphi_{t,h,g}$ globally.

The description of $\Aut(X_{\Delta_r})$ by the parameters $(t,h,g,\tau^{\epsilon})$, together with the composition law above and the action of $\tau$, is literally the same for $r=1$ and $r=2$; matching parameters therefore defines a group isomorphism $\Theta\colon\Aut(X_{\Delta_1})\to\Aut(X_{\Delta_2})$. That $\Theta$ is an isomorphism of ind-groups is now a matter of reading off the filtrations. Recall from \cref{subsec:ind} that the $d$-th filtration piece of $\Aut(X_{\Delta_r})$ consists of the pairs $(\varphi,\varphi^{-1})$ with $\deg\varphi(\overline{x}_l)\leq d$ and $\deg\varphi^{-1}(\overline{x}_l)\leq d$ for all $l$. For $\varphi=\varphi_{t,h,g}\tau^{\epsilon}$ the first condition reads $\deg h\leq d-1$ and $\deg g\leq d-1$; and the composition law shows that $\varphi_{t,h,g}^{-1}=\varphi_{t^{-1},\tilde h,\tilde g}$ with $\tilde h(x)=-t_1^{-1}t_2^{-1}\,h(t_1^{-1}x)$ and $\tilde g(x)=-t_4^{-1}t_5^{-1}\,g(t_5^{-1}x)$, so $\deg\tilde h=\deg h$ and $\deg\tilde g=\deg g$ and the second condition is implied by the first. Hence for every $d\geq1$ the $d$-th piece is, for both $r=1$ and $r=2$,
$$
\Aut(X_{\Delta_r})_{\leq d}\;=\;\bigl\{(t,h,g,\epsilon)\ :\ \deg h\leq d-1,\ \deg g\leq d-1\bigr\}\;\cong\;(\KK^{\times})^{5}\times\AF^{d}\times\AF^{d}\times\ZZ/2,
$$
the coordinates on the right being $t$, the coefficient vectors of $h$ and $g$, and $\epsilon$. In these coordinates $\Theta$ is the identity, hence an isomorphism of varieties on each piece, and therefore an isomorphism of ind-groups. On the other hand, the multiset of dimensions of the irreducible components is $(2,2,2,2)$ for $X_{\Delta_1}$ and $(2,2,2,1)$ for $X_{\Delta_2}$, so the varieties are not isomorphic.
\end{proof}

Multiplying by an affine line repairs the failure, here and in general; see
\cref{cor:stabilized} and \cref{cor:product-line} in \cref{sec:main}.

At the opposite extreme, the rigid case admits a closed-form classification. Recall from \cref{prop:rigid-cl} and \cref{cor:hidden-rigid} that $\Delta$ is fully hidden precisely when every face of $\Delta$ is an intersection of facets, and precisely when $X_\Delta$ is rigid; the proof below uses the first description.

\begin{theorem}\label{thm:rigid-aut}
Let $\Delta$ be a fully hidden simplicial complex on $[n]$. Then
$$
\Aut(X_\Delta)\ =\ T_0\ \rtimes\ S(\Delta),
$$
a linear algebraic group of dimension~$n$ whose identity component is $T_0$.\rchange{\footnote{%
For \emph{irreducible} rigid affine varieties, \cite[Theorem~1]{AG17} produces a subtorus of
$\Aut(X)$ containing every other subtorus, which is therefore normal. That statement does not
cover the present case: a fully hidden $\Delta$ on $n\geq2$ vertices has at least two facets;
a single facet would be all of $[n]$, every vertex of which is private, so $X_\Delta$ is
reducible. Their Proposition~1 does give a finite extension of the torus when the canonical
grading is pointed, a hypothesis $\KK[\Delta]$ satisfies (its weight cone is $\ZZ^n_{\geq0}$
and its degree-zero part is $\KK$), but only for an integral graded algebra, and the extension
is not asserted to split. Whether $\Aut(X)$ is a finite extension of a torus for a general rigid
$X$ is open even under strong hypotheses \cite[Remark~6.5]{BGa25}. The proof below is self-contained and gives more: the
extension splits, with finite complement $S(\Delta)$.}}
\end{theorem}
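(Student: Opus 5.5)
The plan is to show that every $\varphi\in\Aut(X_\Delta)$ becomes diagonal after composing with a permutation automorphism $\rho_\sigma$, $\sigma\in S(\Delta)$. The argument uses the description of fully hidden complexes from \cref{prop:rigid-cl}: every face is an intersection of facets. In particular every singleton $\{i\}$ is an intersection of facets, so every coordinate line $V_{\{i\}}$ is an intersection of irreducible components.

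First, by \cref{lem:origin}, $\varphi$ permutes the components $V_F$ and so induces a bijection $\pi$ of $\mathcal{F}(\Delta)$ that preserves the dimensions of all intersections $\bigcap_{F\in\mathcal{S}}V_F\cong\AF^{|\bigcap\mathcal{S}|}$ (\eqref{eq:intersections}). We apply \cref{lem:matching} to the lists $(F)_F$ and $(\pi(F))_F$, which both cover $[n]$. This gives $\sigma\in S_n$ with $\sigma(F)=\pi(F)$ for all facets, so $\sigma\in S(\Delta)$. Replacing $\varphi$ by $\rho_\sigma^{-1}\circ\varphi$ (or the appropriate composite), we may assume that $\varphi$ fixes every component $V_F$. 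Then $\varphi$ also fixes every intersection of components, and hence, by the fully hidden hypothesis, every face subspace $V_S$ with $S\in\Delta$, in particular every coordinate line $V_{\{i\}}$.

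Next we show that $\varphi$ is diagonal. Since $\Delta$ has no cone vertex (a cone vertex $i$ would give $\emptyset\in\mathcal{A}_i$, contradicting \cref{prop:rigid-cl}), $\varphi$ fixes the origin. Fix $i$ and a facet $F\ni i$. On $V_F\cong\AF^{|F|}$, $\varphi$ preserves every coordinate subspace $V_S$, $S\subseteq F$, because each is an intersection of components. In particular $\varphi$ preserves each hyperplane $\{x_j=0\}\cap V_F$ for $j\in F$: indeed $F\setminus\{j\}\in\Delta$ is an intersection of facets, so this hyperplane is stable. The unit argument used in \cref{prop:same-aut} then gives $\varphi^*(x_j)|_{V_F}=x_j\,p$ with $p\in\KK[x_l:l\in F]^\times=\KK^\times$, so $\varphi^*(x_j)|_{V_F}=t_{j,F}x_j$. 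Restricting to the line $V_{\{j\}}$ shows that $t_{j,F}$ is independent of $F$. Since $\bigcap_F\mathfrak{q}_F=0$, we get $\varphi^*(\overline{x}_j)=t_j\overline{x}_j$ globally, that is, $\varphi\in T_0$. This step, checking that each hyperplane of each component is stable, is where the fully hidden hypothesis is used decisively, and it is the main point to get right: the hyperplane must be identified as a union/intersection of components carried to itself, which requires $F\setminus\{j\}$ to be closed under $\operatorname{cl}$.

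Finally we assemble the structure. $T_0$ is normal, since $\rho_\sigma t\rho_\sigma^{-1}$ is the permuted torus element. Moreover $T_0\cap\{\rho_\sigma\}=1$, because a nontrivial $\rho_\sigma$ moves some coordinate line and hence is not diagonal. Thus $\Aut(X_\Delta)=T_0\rtimes S(\Delta)$ as abstract groups. The degrees of $\varphi^{\pm1}(\overline{x}_l)$ are all $1$, so the whole group lies in the first filtration piece, where it is closed (a finite union of translates of the closed subgroup $T_0$ from \cref{thm:tori}\ref{tori.a}). Hence it is a linear algebraic group of dimension $n$, and since $S(\Delta)$ is finite its identity component is $T_0$.
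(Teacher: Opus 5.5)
Your proposal is correct and follows the paper's proof essentially step for step. You use \cref{lem:origin} and \cref{lem:matching} to reduce to an automorphism fixing every component, then use that every face is an intersection of facets to make each coordinate hyperplane of each component stable, apply the unit argument to get diagonal scalars that agree on the coordinate lines, and conclude via $\bigcap_F\mathfrak{q}_F=0$. Your explicit checks of the semidirect-product structure and of closedness in the first filtration piece are a slightly more careful version of the paper's final paragraph.
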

\begin{proof}
Let $\varphi\in\Aut(X_\Delta)$. By \cref{lem:origin} it permutes the components $\{V_F\}$; write $F\mapsto G(F)$ for the induced permutation of $\mathcal{F}(\Delta)$, so $\varphi(V_F)=V_{G(F)}$ and $\varphi^*(\mathfrak{q}_{G(F)})=\mathfrak{q}_F$, the inclusion of the proof of \cref{lem:components} being an equality because $\varphi^*$ carries minimal primes to minimal primes. Hence for every nonempty $\mathcal{S}\subseteq\mathcal{F}(\Delta)$ the map $\varphi^*$ carries $\sum_{F\in\mathcal{S}}\mathfrak{q}_{G(F)}$ onto $\sum_{F\in\mathcal{S}}\mathfrak{q}_F$ and induces an isomorphism of the quotients, which by \eqref{eq:intersections} are polynomial rings of dimensions $|\bigcap_{F\in\mathcal{S}}G(F)|$ and $|\bigcap_{F\in\mathcal{S}}F|$; so
$$
\Bigl|\bigcap_{F\in\mathcal{S}}F\Bigr|=\Bigl|\bigcap_{F\in\mathcal{S}}G(F)\Bigr|
\qquad\text{for every nonempty }\mathcal{S}\subseteq\mathcal{F}(\Delta),
$$
exactly as in the proof of \cref{thm:tori}\ref{tori.c}. Both facet families cover $[n]$ (every singleton is a face), so \cref{lem:matching} produces $\sigma\in S_n$ with $\sigma(F)=G(F)$ for every facet $F$; in particular $\sigma\in S(\Delta)$. Composing with $\rho_{\sigma^{-1}}$, we reduce to $\varphi(V_F)=V_F$ for every $F$.

By \cref{prop:rigid-cl}\ref{rc.c}, every face $S\in\Delta$ is an intersection of facets, so the coordinate subspace $V_S=\bigcap_{F\in\mathcal{F}_S(\Delta)}V_F$ is a finite intersection of $\varphi$-stable components and is $\varphi$-stable. In particular, on any component $V_F=\Spec\KK[x_j:j\in F]$, each coordinate ideal $(x_j)$ (with $j\in F$) is $\varphi|_{V_F}$-stable, since it cuts out the $\varphi$-stable subspace $V_{F\setminus\{j\}}\subseteq V_F$ (a face, by \cref{prop:rigid-cl}\ref{rc.c}). Write $\psi:=\varphi|_{V_F}$; then $\psi(x_j)=x_j\,p_j$ for some $p_j\in\KK[F]$. Applying the same to $\psi^{-1}$ gives $\psi^{-1}(x_j)=x_j\,q_j$, and
$$
x_j=\psi(\psi^{-1}(x_j))=\psi(x_j q_j)=(x_j p_j)\,\psi(q_j)=x_j\,p_j\psi(q_j),
$$
so $p_j\psi(q_j)=1$ in $\KK[F]$, forcing $p_j\in\KK[F]^\times=\KK^\times$. Thus $\psi$ is the diagonal automorphism $x_j\mapsto t_j^{(F)}x_j$ with scalars $t_j^{(F)}\in\KK^\times$, $j\in F$.

The scalars are independent of the facet chosen. If $F,G\in\mathcal{F}(\Delta)$ and $j\in F\cap G$, the line $V_{\{j\}}\subseteq V_F\cap V_G$ is $\varphi$-stable (again by \cref{prop:rigid-cl}\ref{rc.c}), and the restrictions of $\varphi|_{V_F}$ and $\varphi|_{V_G}$ to $V_{\{j\}}$ both equal multiplication by $t_j^{(F)}$ and $t_j^{(G)}$ respectively, hence $t_j^{(F)}=t_j^{(G)}=:t_j$. Setting $t:=(t_1,\dots,t_n)\in T_0$, the automorphism $\varphi$ and the diagonal $\varphi_t\in T_0$ agree on every component, hence agree modulo $\bigcap_F\mathfrak{q}_F=0$, i.e., $\varphi=\varphi_t$.

Every $\varphi\in\Aut(X_\Delta)$ therefore factors uniquely as $\varphi_t\circ\rho_\sigma$ with $t\in T_0$ and $\sigma\in S(\Delta)$; conversely, every such product is an automorphism. The action of $S(\Delta)$ on $T_0$ is the standard permutation of coordinates, giving $\Aut(X_\Delta)=T_0\rtimes S(\Delta)$. Since $T_0$ is a closed algebraic subgroup of dimension $n$ (\cref{thm:tori}\ref{tori.a}) and $S(\Delta)$ is finite, the whole is a linear algebraic group of dimension $n$. Finally, $T_0$ is connected and of finite index $|S(\Delta)|$ in $T_0\rtimes S(\Delta)$, hence is open and closed in it; a connected open subgroup containing the identity is the identity component, so $\Aut(X_\Delta)^\circ=T_0$.
\end{proof}

\begin{remark}\label{rem:rigid-vs-nonrigid}
A rigid Stanley-Reisner variety never shares its automorphism ind-group with a non-rigid one,
fully exposed ones included. Indeed, if $\mathcal{R}(\Delta')\neq\emptyset$ then
$\Aut(X_{\Delta'})$ contains a root subgroup, hence a copy of $\GA$
(\cref{prop:root-subgroups}); whereas for fully hidden $\Delta$ the group
$\Aut(X_\Delta)=T_0\rtimes S(\Delta)$ contains none, any morphism $\GA\to T_0\rtimes S(\Delta)$
factoring through $T_0$ and every algebraic-group morphism $\GA\to T_0$ being trivial. Since an
isomorphism of ind-groups carries algebraic $\GA$-subgroups to algebraic $\GA$-subgroups
(\cref{lem:image-algebraic}), the two ind-groups are not isomorphic. In particular the description
of \cref{thm:rigid-aut} characterizes the fully hidden complexes: if
$\Aut(X_\Delta)\cong T_0\rtimes S(\Delta)$ as ind-groups then $\Delta$ is fully hidden, since
otherwise $\mathcal{R}(\Delta)\neq\emptyset$ and the left-hand side would contain a $\GA$ that the
right-hand side does not. Consequently the fully
exposed hypothesis of \cref{thm:main} cannot be removed by passing to a model: no assignment
$\Delta\mapsto\Phi(\Delta)$ producing a fully exposed complex with
$\Aut(X_{\Phi(\Delta)})\cong\Aut(X_\Delta)$ can be defined on the rigid stratum.
\end{remark}

\section{The characterization theorem}\label{sec:main}

We fix two fully exposed simplicial complexes $\Delta$ on $[n]$ and $\Delta'$ on $[n']$; recall that every singleton is a face (\cref{subsec:sr}) and that full exposedness forces $\mathcal{R}(\Delta)\neq\emptyset$. An isomorphism of ind-groups $\theta\colon\Aut(X_\Delta)\to\Aut(X_{\Delta'})$ transports the maximal torus and the root subgroups, hence induces $w\in\GL_n(\ZZ)$ with $w(\mathcal{R}(\Delta))=\mathcal{R}(\Delta')$ (\cref{thm:transfer,prop:linear}). The work is to show that such a $w$ must be a permutation matrix (\cref{thm:permutation}); \cref{thm:core} then converts this into the Main Theorem.

\begin{proposition}\label{thm:transfer}
Let $\theta\colon\Aut(X_\Delta)\to\Aut(X_{\Delta'})$ be an isomorphism of ind-groups. Then:
\begin{enumerate}[label=\textup{(\alph*)}]
\item\label{tr.a} $\theta(T_0)$ is an algebraic subgroup of $\Aut(X_{\Delta'})$ and $\theta|_{T_0}\colon T_0\to\theta(T_0)$ is an isomorphism of algebraic groups; in particular $\theta(T_0)$ is an algebraic torus of dimension $n$.
\item\label{tr.b} $n=n'$, and after composing $\theta$ with conjugation by some $g\in\Aut(X_{\Delta'})$ we may assume $\theta(T_0)=T_0'$.
\item\label{tr.c} Assume moreover that $\theta(T_0)=T_0'$, as arranged in \ref{tr.b}. Then for every root subgroup $U\subseteq\Aut(X_\Delta)$ with respect to $T_0$, of weight $\alpha$, the image $\theta(U)$ is a root subgroup of $\Aut(X_{\Delta'})$ with respect to $T_0'$, of weight $\alpha\circ(\theta|_{T_0})^{-1}$.
\end{enumerate}
\end{proposition}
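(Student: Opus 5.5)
Part \ref{tr.a} is a direct application of \cref{lem:image-algebraic}. Take $A=T_0$, a connected commutative algebraic subgroup of $\Aut(X_\Delta)$ by \cref{thm:tori}\ref{tori.a}, and take $\theta$ injective. Then $\theta(T_0)$ is a connected commutative algebraic subgroup, and $\theta|_{T_0}$ is an isomorphism of algebraic groups onto it. An algebraic group isomorphic to $(\KK^*)^n$ is a torus of dimension $n$, so $\theta(T_0)$ is an $n$-dimensional torus in $\Aut(X_{\Delta'})$.

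For \ref{tr.b} I would compare dimensions of maximal tori on both sides. By \cref{thm:tori}\ref{tori.b} applied to $\Delta'$, every torus in $\Aut(X_{\Delta'})$ has dimension at most $n'$, so $n\le n'$. Applying \ref{tr.a} to $\theta^{-1}$, which is also an isomorphism of ind-groups, gives an $n'$-dimensional torus $\theta^{-1}(T_0')$ in $\Aut(X_\Delta)$, hence $n'\le n$. Thus $n=n'$, and $\theta(T_0)$ is a torus of dimension $n'$ in $\Aut(X_{\Delta'})$. By \cref{thm:tori}\ref{tori.c} there is $g$ with $g\,\theta(T_0)\,g^{-1}=T_0'$. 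Conjugation by $g$ is an automorphism of the ind-group, since multiplication and inversion are ind-morphisms. So $c_g\circ\theta$ is again an isomorphism of ind-groups, and it sends $T_0$ to $T_0'$.

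For \ref{tr.c}, let $(U,\varepsilon)$ be a root subgroup of weight $\alpha$. First, $U\cong\GA$ is a connected algebraic subgroup, so by \cref{lem:image-algebraic} $\theta(U)$ is an algebraic subgroup and $\theta\circ\varepsilon\colon\GA\to\theta(U)$ is an isomorphism of algebraic groups. Next I check the weight. Put $\varepsilon':=\theta\circ\varepsilon$ and, for $t'\in T_0'$, set $t:=(\theta|_{T_0})^{-1}(t')\in T_0$. Then
$$t'\varepsilon'(s)t'^{-1}=\theta\bigl(t\varepsilon(s)t^{-1}\bigr)=\theta\bigl(\varepsilon(\alpha(t)s)\bigr)=\varepsilon'\bigl((\alpha\circ(\theta|_{T_0})^{-1})(t')\,s\bigr).$$
The map $\alpha':=\alpha\circ(\theta|_{T_0})^{-1}$ is a character of $T_0'$, because $(\theta|_{T_0})^{-1}\colon T_0'\to T_0$ is a morphism of algebraic groups by \ref{tr.a}. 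It is nontrivial because $\alpha$ is nontrivial and $(\theta|_{T_0})^{-1}$ is bijective. Hence $\theta(U)$ is a root subgroup of weight $\alpha'$.

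The only point needing care is that the inverse $T_0'\to T_0$ is algebraic, and not merely a group map. This is exactly the isomorphism statement of \cref{lem:image-algebraic}, which relies on characteristic zero.
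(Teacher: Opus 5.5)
Your proposal is correct and follows the paper's proof essentially step for step. Part (a) comes from \cref{lem:image-algebraic} applied to $T_0$. Part (b) uses the dimension bound of \cref{thm:tori}\ref{tori.b} in both directions and then the conjugacy of \cref{thm:tori}\ref{tori.c}. Part (c) is the same conjugation computation.

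There are two small differences. The paper first observes that $\theta(T_0)$ is a maximal torus; you rightly skip this, since \cref{thm:tori}\ref{tori.c} only needs dimension $n'$. You also make explicit why $\alpha\circ(\theta|_{T_0})^{-1}$ is an algebraic and nontrivial character, which the paper only asserts.
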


\begin{proof}
\ref{tr.a} is \cref{lem:image-algebraic} applied to $A=T_0$.

\ref{tr.b} By \ref{tr.a}, $\Aut(X_{\Delta'})$ contains a torus of dimension $n$, so $n\leq n'$ by \cref{thm:tori} applied to $\Delta'$; applying the same argument to $\theta^{-1}$ gives $n'\leq n$, hence $n=n'$. Any torus $T'\supseteq\theta(T_0)$ satisfies $\dim T'\leq n'=\dim\theta(T_0)$, and a closed connected subgroup of full dimension in a torus is the whole torus; hence $\theta(T_0)$ is a maximal torus; since $\dim\theta(T_0)=n'$, \cref{thm:tori} yields $g\in\Aut(X_{\Delta'})$ with $g\,\theta(T_0)\,g^{-1}=T_0'$. Replace $\theta$ by $t\mapsto g\,\theta(t)\,g^{-1}$.

\ref{tr.c} Assume $\theta(T_0)=T_0'$ and let $(U,\varepsilon)$ be a root subgroup of weight $\alpha$. By \cref{lem:image-algebraic}, $\theta(U)$ is an algebraic subgroup isomorphic to $\GA$, with parametrization $\varepsilon':=\theta\circ\varepsilon$. Every $t'\in T_0'$ is of the form $t'=\theta(t)$ with $t\in T_0$, and
$$
t'\,\varepsilon'(s)\,t'^{-1}=\theta\bigl(t\,\varepsilon(s)\,t^{-1}\bigr)=\theta\bigl(\varepsilon(t^\alpha s)\bigr)=\varepsilon'\bigl(t^\alpha s\bigr)
=\varepsilon'\Bigl(\bigl(\alpha\circ(\theta|_{T_0})^{-1}\bigr)(t')\cdot s\Bigr),
$$
and the character $\alpha\circ(\theta|_{T_0})^{-1}$ of $T_0'$ is nontrivial. Hence $\theta(U)$ is a root subgroup of the stated weight.
\end{proof}

\begin{proposition}\label{prop:linear}
Let $\theta\colon\Aut(X_\Delta)\to\Aut(X_{\Delta'})$ be an isomorphism of ind-groups with $\theta(T_0)=T_0'$, and let $\vartheta:=\theta|_{T_0}\colon T_0\xrightarrow{\sim} T_0'$. Let $w:=(\vartheta^{-1})^*\colon X(T_0)\to X(T_0')$ be the induced isomorphism of character lattices, regarded as an element of $\GL_n(\ZZ)$ after the identifications $X(T_0)=X(T_0')=\ZZ^n$. Then $\theta$ maps the root subgroup of weight $\alpha$ to a root subgroup of weight $w\cdot\alpha$, and
$$
w\bigl(\mathcal{R}(\Delta)\bigr)=\mathcal{R}(\Delta').
$$
\end{proposition}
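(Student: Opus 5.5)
The argument splits into two inclusions: first show $w(\mathcal{R}(\Delta))\subseteq\mathcal{R}(\Delta')$, then get the reverse one by applying the same argument to $\theta^{-1}$.

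\textbf{Setup and the weight of $\theta(U)$.} By \cref{thm:transfer}\ref{tr.c}, for every root subgroup $U\subseteq\Aut(X_\Delta)$ with respect to $T_0$, of weight $\alpha$, the image $\theta(U)$ is a root subgroup of $\Aut(X_{\Delta'})$ with respect to $T_0'$, of weight $\alpha\circ\vartheta^{-1}$. This character is $(\vartheta^{-1})^*(\alpha)=w\cdot\alpha$ by definition of the pullback on characters, which gives the first assertion. Under the identification $X(T_0)=\ZZ^n$ used in \cref{prop:root-subgroups}, the character $t\mapsto t^\alpha$ corresponds to $\alpha$, and likewise for $T_0'$. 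So $w$ is the honest linear map on $\ZZ^n$, and it lies in $\GL_n(\ZZ)$ because $\vartheta$ is an isomorphism of tori.

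\textbf{Forward inclusion.} Let $\alpha\in\mathcal{R}(\Delta)$.
\begin{enumerate}[label=\textup{(\roman*)}]
\item By \cref{prop:root-subgroups} there is a root subgroup $U_\alpha$ of weight $\alpha$.
\item Its image $\theta(U_\alpha)$ is a root subgroup of $\Aut(X_{\Delta'})$ with respect to $T_0'$, of weight $w\cdot\alpha$.
\item Applying \cref{prop:root-subgroups} to $\Delta'$ in the converse direction gives $w\cdot\alpha\in\mathcal{R}(\Delta')$.
\end{enumerate}
Hence $w(\mathcal{R}(\Delta))\subseteq\mathcal{R}(\Delta')$.

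\textbf{Reverse inclusion.} Consider $\theta^{-1}$. It is an isomorphism of ind-groups with $\theta^{-1}(T_0')=T_0$, and its restriction to $T_0'$ is $\vartheta^{-1}$. The induced lattice map is therefore $(\vartheta^*)=w^{-1}$. The same argument then yields $w^{-1}(\mathcal{R}(\Delta'))\subseteq\mathcal{R}(\Delta)$, and combining the two inclusions gives equality.

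The only delicate point is bookkeeping: checking that the identification of characters with $\ZZ^n$ is the same one used in \cref{prop:root-subgroups}, and that $\theta^{-1}$ satisfies the hypothesis $\theta^{-1}(T_0')=T_0$. The latter is immediate from $\theta(T_0)=T_0'$. Nothing else is an obstacle, since the substantive content is already contained in \cref{prop:root-subgroups} and \cref{thm:transfer}.
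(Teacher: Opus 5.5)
Your proposal is correct and follows the paper's proof essentially verbatim. You transport root subgroups via \cref{thm:transfer}\ref{tr.c}, identify the weights on both sides with $\mathcal{R}(\Delta)$ and $\mathcal{R}(\Delta')$ through \cref{prop:root-subgroups}, and obtain the reverse inclusion by applying the same argument to $\theta^{-1}$, whose induced lattice map is $w^{-1}$.
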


\begin{proof}
By \cref{thm:transfer}\ref{tr.c}, $\theta$ maps every root subgroup of weight $\alpha$ to a root subgroup of weight $\alpha\circ\vartheta^{-1}$. Since $w=(\vartheta^{-1})^*$, this weight equals $w\cdot\alpha$. By \cref{prop:root-subgroups}, the weights of root subgroups of $\Aut(X_\Delta)$ with respect to $T_0$ are exactly $\mathcal{R}(\Delta)$, and those of $\Aut(X_{\Delta'})$ with respect to $T_0'$ are exactly $\mathcal{R}(\Delta')$; hence $w(\mathcal{R}(\Delta))\subseteq\mathcal{R}(\Delta')$. Applying the same argument to $\theta^{-1}$ gives $w^{-1}(\mathcal{R}(\Delta'))\subseteq\mathcal{R}(\Delta)$, i.e., $\mathcal{R}(\Delta')\subseteq w(\mathcal{R}(\Delta))$. Therefore $w(\mathcal{R}(\Delta))=\mathcal{R}(\Delta')$.
\end{proof}

It remains to show that $w$ is a permutation matrix. We partition the vertex set $[n]$ according to the roles played by each index in the root set.

\begin{definition}\label{def:support-target}
The sets of \emph{support vertices} and of \emph{target-only vertices} of $\Delta$ are
$$
\Supp(\Delta):=\bigcup_{i\in[n]}\ \bigcup_{S\in\mathcal{A}_i(\Delta)} S,
\qquad
\mathcal{O}(\Delta):=[n]\setminus\Supp(\Delta).
$$
We write $\ZZ^{\Supp(\Delta)}:=\bigoplus_{j\in\Supp(\Delta)}\ZZ\,\mathbf{e}_j\subseteq\ZZ^n$.
\end{definition}

\begin{lemma}\label{lem:target-properties}
Let $\Delta$ be fully exposed, and let $j\in\mathcal{O}(\Delta)$. Then $\mathcal{A}_j(\Delta)\neq\emptyset$ and $\bigcap_{S\in\mathcal{A}_j(\Delta)}S=\emptyset$.
\end{lemma}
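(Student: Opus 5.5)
The plan is to use only the definition of $\Supp(\Delta)$ together with \cref{lem:closure-Ai} and \cref{cor:fe-private}: a vertex $j$ that appears in no member of any $\mathcal{A}_i(\Delta)$ must be the private vertex of every facet through it, and this pins down both claims. Throughout, fix $j\in\mathcal{O}(\Delta)$, so that $j\notin S$ for every $i\in[n]$ and every $S\in\mathcal{A}_i(\Delta)$.

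First we dispose of the degenerate cases. If $j$ is a cone vertex, then $\emptyset\in\mathcal{A}_j(\Delta)$ by \cref{lem:cone-roots}. This gives both nonemptiness and $\bigcap_{S\in\mathcal{A}_j(\Delta)}S\subseteq\emptyset$. In particular the case $n=1$ is covered, so we may assume $n\geq2$ and that $j$ is not a cone vertex.

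For nonemptiness, pick any facet $F$ containing $j$; one exists because $\{j\}\in\Delta$. Since $n\geq2$ and $\Delta$ is fully exposed, $|F|\geq2$ by \cref{lem:private-vertex}, and $F$ has a private vertex $i$ by \cref{cor:fe-private}. As in the proof of \cref{lem:private-vertex}, $S:=F\setminus\{i\}$ satisfies $\mathcal{F}_S(\Delta)=\{F\}\subseteq\mathcal{F}_i(\Delta)$, so $S\in\mathcal{A}_i(\Delta)$ by \cref{lem:closure-Ai}. If $i\neq j$, then $j\in S$ and hence $j\in\Supp(\Delta)$, which is a contradiction. Therefore $i=j$, and $F\setminus\{j\}\in\mathcal{A}_j(\Delta)$. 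This shows $\mathcal{A}_j(\Delta)\neq\emptyset$. More precisely, $F\setminus\{j\}\in\mathcal{A}_j(\Delta)$ for \emph{every} facet $F\ni j$.

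For the empty intersection, suppose some vertex $v$ lies in every member of $\mathcal{A}_j(\Delta)$. By the previous paragraph, $v\in F\setminus\{j\}$ for every facet $F\in\mathcal{F}_j(\Delta)$; thus $v\neq j$ and $\mathcal{F}_{\{j\}}(\Delta)\subseteq\mathcal{F}_v(\Delta)$. Moreover $\mathcal{F}_{\{j\}}(\Delta)\neq\emptyset$. By \cref{lem:closure-Ai}, this means $\{j\}\in\mathcal{A}_v(\Delta)$, so $j\in\Supp(\Delta)$, again a contradiction. Hence $\bigcap_{S\in\mathcal{A}_j(\Delta)}S=\emptyset$.

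The only step requiring care is the first one: showing that $j$ must itself be the private vertex of each facet through it. This is exactly where full exposedness enters. Once it is established, the second claim is a one-line application of \cref{lem:closure-Ai}.
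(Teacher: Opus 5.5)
Your proof is correct and follows the paper's route for nonemptiness: the private vertex of any facet through $j$ must be $j$ itself, so $F\setminus\{j\}\in\mathcal{A}_j(\Delta)$ for every facet $F\ni j$. You split on whether $j$ is a cone vertex rather than on whether $\Delta$ has one, which lets your general argument absorb the case of some other cone vertex. For the empty intersection you are more direct than the paper: a common vertex $v$ would lie in every facet through $j$, which gives $\{j\}\in\mathcal{A}_v(\Delta)$ at once, whereas the paper gets its contradiction from a private/non-private case analysis of $v$ in a fixed facet $G_0\ni j$. Your version is shorter and uses full exposedness only in the nonemptiness step.
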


\begin{proof}
If $\Delta$ has a cone vertex $i_0$, then for every $l\notin c(\Delta)$ there is a facet $G\ni l$ with $i_0\in G$ and $l\neq i_0$; by \cref{lem:cone-roots}\ref{cr.1}, $G\setminus\{i_0\}\in\mathcal{A}_{i_0}(\Delta)$, so $l\in\Supp(\Delta)$. Hence $\mathcal{O}(\Delta)\subseteq c(\Delta)$, and for $j\in\mathcal{O}(\Delta)$ we have $\emptyset\in\mathcal{A}_j(\Delta)$ by \cref{lem:cone-roots}, giving $\mathcal{A}_j(\Delta)\neq\emptyset$ and $\bigcap_{S\in\mathcal{A}_j(\Delta)}S=\emptyset$.

Assume now that $\Delta$ has no cone vertices. Let $G$ be a facet with $j\in G$. Since $\Delta$ is fully exposed, $G$ has a private vertex $p$ by \cref{lem:private-vertex}, and $G\setminus\{p\}\in\mathcal{A}_p(\Delta)$ is nonempty, so every vertex of $G\setminus\{p\}$ lies in $\Supp(\Delta)$. Since $j\in\mathcal{O}(\Delta)$, we must have $p=j$. Thus $j$ is a private vertex of every facet containing it, whence $G\setminus\{j\}\in\mathcal{A}_j(\Delta)$ for every facet $G\ni j$, giving $\mathcal{A}_j(\Delta)\neq\emptyset$.

For the intersection, suppose $c\in\bigcap_{S\in\mathcal{A}_j(\Delta)}S$; note $c\neq j$. Then $c$ belongs to every facet containing $j$: if $G\ni j$ is a facet, then $G\setminus\{j\}\in\mathcal{A}_j(\Delta)$, so $c\in G\setminus\{j\}\subseteq G$. Fix a facet $G_0\ni j$. If $c$ were a private vertex of $G_0$, then $G_0\setminus\{c\}\in\mathcal{A}_c(\Delta)$ and $j\in G_0\setminus\{c\}\subseteq\Supp(\Delta)$, contradicting $j\in\mathcal{O}(\Delta)$. Hence $c$ is not private in $G_0$, so there is a facet $G'\neq G_0$ with $G'\supseteq G_0\setminus\{c\}$. Then $j\in G_0\setminus\{c\}\subseteq G'$, so $c\in G'$, whence $G'\supseteq G_0$, contradicting $G'\neq G_0$.
\end{proof}

A vector $d\in\mathbb Z^n\setminus{0}$ is called primitive if $\gcd(d_1,\ldots,d_n)=1$.
\begin{lemma}\label{lem:rays}
Let $\Delta$ satisfy $\mathcal{R}(\Delta)\neq\emptyset$, and define the set of \emph{ray directions}
$$
\mathcal{D}(\Delta):=\bigl\{d\in\ZZ^n \text{ primitive}\;:\; \alpha+\ZZ_{\geq0}\,d\subseteq\mathcal{R}(\Delta) \text{ for some } \alpha\in\mathcal{R}(\Delta)\bigr\}.
$$
Then
$$
\mathcal{D}(\Delta)=\bigl\{d\in\ZZ^n_{\geq0}\text{ primitive}\;:\;\emptyset\neq\supp(d)\subseteq S \text{ for some } S\in\textstyle\bigcup_i\mathcal{A}_i(\Delta)\bigr\},
$$
and $\cone(\mathcal{D}(\Delta))=\RR_{\geq0}^{\Supp(\Delta)}:=\cone(\mathbf{e}_j : j\in\Supp(\Delta))$, a simplicial cone whose extreme rays are the $\RR_{\geq0}\mathbf{e}_j$ with $j\in\Supp(\Delta)$.
\end{lemma}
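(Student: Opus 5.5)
We prove the two inclusions for $\mathcal{D}(\Delta)$ separately and then read off the cone.

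\emph{Inclusion $\supseteq$.} Let $d\in\ZZ^n_{\geq0}$ be primitive with $\emptyset\neq\supp(d)\subseteq S$ for some $S\in\mathcal{A}_i(\Delta)$. Since $S$ is a face, take $\beta:=\mathbf{e}_S$ and set $\alpha:=\beta-\mathbf{e}_i$; note $\beta_i=0$ because $i\notin S$. For every $k\geq0$ the vector $\beta+kd$ lies in $\ZZ^n_{\geq0}$, has $i$-th coordinate $0$, and has support exactly $S$. Hence $\alpha+kd=(\beta+kd)-\mathbf{e}_i\in\mathcal{R}(\Delta)$ by \cref{def:roots-comb}, so $d\in\mathcal{D}(\Delta)$.

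\emph{Inclusion $\subseteq$.} Let $\alpha+\ZZ_{\geq0}d\subseteq\mathcal{R}(\Delta)$ with $d$ primitive, hence $d\neq0$. By \cref{lem:root-data}(i), every $\alpha+kd$ has exactly one negative coordinate, equal to $-1$.

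First we show all targets coincide. Suppose $d_l<0$ for some $l$. Then $(\alpha+kd)_l\to-\infty$, contradicting that every coordinate is $\geq-1$. So $d\in\ZZ^n_{\geq0}$, and coordinates are nondecreasing in $k$. The negative coordinate of $\alpha+kd$ at index $t_k$ equals $-1$, so $\alpha_{t_k}+kd_{t_k}=-1$. Since $\alpha_{t_k}\geq-1$ and $d_{t_k}\geq0$, this forces $\alpha_{t_k}=-1$ and $d_{t_k}=0$ (for $k\geq1$). As $\alpha$ has a unique $-1$ coordinate, namely $t:=t(\alpha)$, we get $t_k=t$ for all $k$ and $d_t=0$.

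Next we locate $\supp(d)$. For $k\geq1$ the support of the root $\alpha+kd$ is $S_k=\supp(\beta)\cup\supp(d)$, where $\beta=\alpha+\mathbf{e}_t$. Each $S_k$ lies in $\mathcal{A}_t(\Delta)$, so $\supp(d)\subseteq S_1\in\mathcal{A}_t(\Delta)$. Together with $d\neq0$ this gives the reverse inclusion.

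\emph{The cone.} Each $\mathbf{e}_j$ with $j\in\Supp(\Delta)$ is primitive and lies in $\mathcal{D}(\Delta)$ by the first inclusion, taking $S\ni j$. Conversely, every element of $\mathcal{D}(\Delta)$ is a nonnegative vector with support inside $\Supp(\Delta)$. Hence $\cone(\mathcal{D}(\Delta))=\RR_{\geq0}^{\Supp(\Delta)}$. This is the positive orthant of a coordinate subspace, so it is simplicial and its extreme rays are exactly the $\RR_{\geq0}\mathbf{e}_j$ with $j\in\Supp(\Delta)$.

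The hypothesis $\mathcal{R}(\Delta)\neq\emptyset$ is only needed so that the statement is nonvacuous. The one step requiring care is the argument that the target stays constant along the ray and that $d\geq0$; everything else is bookkeeping.
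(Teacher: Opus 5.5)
Your proposal is correct and follows the paper's proof essentially step by step: the same explicit ray $\mathbf{e}_S-\mathbf{e}_i+\ZZ_{\geq0}d$ for $\supseteq$; for $\subseteq$, the same use of $d\geq0$ and $d_t=0$ with $k=1$; and the same reading of the cone. The only cosmetic difference is how you get $d_t=0$. You locate the unique $-1$ coordinate of each $\alpha+kd$ and show it must sit at $t(\alpha)$. The paper instead argues by contradiction that $d_i>0$ would leave $\alpha+kd$ with no negative coordinate.
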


\begin{proof}
For the inclusion $\supseteq$, let $d\in \Z^n_{\geq 0}$ be primitive with $\emptyset\neq\supp(d)\subseteq S\in\mathcal{A}_i(\Delta)$. Then $i\notin S$ gives $d_i=0$, and for $\alpha:=\mathbf{e}_S-\mathbf{e}_i$ and every $k\geq0$ we have $\alpha+kd=(\mathbf{e}_S+kd)-\mathbf{e}_i$ with $\mathbf{e}_S+kd\in\ZZ^n_{\geq0}$, $(\mathbf{e}_S+kd)_i=0$ and $\supp(\mathbf{e}_S+kd)=S\in\mathcal{A}_i(\Delta)$; hence $\alpha+kd\in\mathcal{R}(\Delta)$.

For the reverse inclusion, suppose $\alpha+kd\in\mathcal{R}(\Delta)$ for all $k\geq0$, where $\alpha=\beta-\mathbf{e}_i\in\mathcal{R}_i(\Delta)$. All coordinates of roots are $\geq-1$ (\cref{lem:root-data}), so a negative coordinate of $d$ would drive a coordinate of $\alpha+kd$ to $-\infty$; hence $d\in \Z^n_{\geq 0}$, and $d\neq0$ by primitivity. If $d_i>0$, then for $k\geq1$ the $i$-th coordinate $-1+kd_i$ is nonnegative, while $(\alpha+kd)_l=\beta_l+kd_l\geq0$ for $l\neq i$; then $\alpha+kd$ has no negative coordinate, which is impossible for a root. Hence $d_i=0$, and for $k=1$ we get $\alpha+d=(\beta+d)-\mathbf{e}_i\in\mathcal{R}_i(\Delta)$, so $S:=\supp(\beta+d)\in\mathcal{A}_i(\Delta)$ contains $\supp(d)$, which is nonempty.

For the last assertion: every $d\in\mathcal{D}(\Delta)$ has $\supp(d)\subseteq\Supp(\Delta)$, and conversely $\mathbf{e}_j\in\mathcal{D}(\Delta)$ for every $j\in\Supp(\Delta)$; hence $\cone(\mathcal{D}(\Delta))=\RR_{\geq0}^{\Supp(\Delta)}$, whose extreme rays are as stated.
\end{proof}

\begin{proposition}\label{thm:permutation}
Let $\Delta$ and $\Delta'$ be fully exposed simplicial complexes on $[n]$, and let $w\in\GL_n(\ZZ)$ satisfy $w(\mathcal{R}(\Delta))=\mathcal{R}(\Delta')$. Then $w$ is a permutation matrix.
\end{proposition}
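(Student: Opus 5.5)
The plan is to read off $w$ on two pieces of $[n]$ separately: first on the support vertices $\Supp(\Delta)$, using the ray directions of \cref{lem:rays}, and then on the target-only vertices $\mathcal{O}(\Delta)$, using \cref{lem:target-properties}.

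\emph{Step 1: support vertices.} Since $w$ is a $\ZZ$-linear bijection with $w(\mathcal{R}(\Delta))=\mathcal{R}(\Delta')$, it preserves primitivity. It also preserves the condition ``$\alpha+\ZZ_{\geq0}d\subseteq\mathcal{R}$ for some root $\alpha$''. Hence $w(\mathcal{D}(\Delta))=\mathcal{D}(\Delta')$. Full exposedness gives $\mathcal{R}\neq\emptyset$ on both sides, so \cref{lem:rays} applies, and $w$ maps the cone $\RR_{\geq0}^{\Supp(\Delta)}$ linearly and bijectively onto $\RR_{\geq0}^{\Supp(\Delta')}$. A linear isomorphism of cones carries extreme rays to extreme rays, and primitive generators to primitive generators. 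Therefore there is a bijection $\pi\colon\Supp(\Delta)\to\Supp(\Delta')$ with $w(\mathbf{e}_j)=\mathbf{e}_{\pi(j)}$, and $w(\ZZ^{\Supp(\Delta)})=\ZZ^{\Supp(\Delta')}$. Consequently $|\mathcal{O}(\Delta)|=|\mathcal{O}(\Delta')|$. Moreover, $w$ induces an isomorphism $\ZZ^n/\ZZ^{\Supp(\Delta)}\to\ZZ^n/\ZZ^{\Supp(\Delta')}$. This is the $\mathcal{O}'\times\mathcal{O}$ block $M$ of $w$, which is therefore in $\GL(\ZZ)$.

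\emph{Step 2: the $\mathcal{O}$-block is a permutation.} Fix $j\in\mathcal{O}(\Delta)$, set $v:=w(\mathbf{e}_j)$, and take $S\in\mathcal{A}_j(\Delta)$, which exists by \cref{lem:target-properties}. For each $k\geq1$, $k\mathbf{e}_S-\mathbf{e}_j\in\mathcal{R}(\Delta)$, so $kw(\mathbf{e}_S)-v\in\mathcal{R}(\Delta')$, where $w(\mathbf{e}_S)=\mathbf{e}_{\pi(S)}$ is supported in $\Supp(\Delta')$. A root of $\Delta'$ has coordinates in $\{0,-1\}$ at indices of $\mathcal{O}(\Delta')$, with at most one $-1$ overall, because its $\beta$-part is supported in $\Supp(\Delta')$. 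Hence $v$ restricted to $\mathcal{O}(\Delta')$ is a $0/1$ vector with at most one $1$. So every column of $M$ has at most one nonzero entry, equal to $1$. Since $M$ is invertible, $M$ is a permutation matrix.

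Thus $w(\mathbf{e}_j)=\mathbf{e}_{j'}+u$ for some $j'\in\mathcal{O}(\Delta')$ and some $u\in\ZZ^{\Supp(\Delta')}$. The target of $k\mathbf{e}_{\pi(S)}-v$ is then forced to be $j'$, since $\mathbf{e}_{\pi(S)}$ vanishes at $j'$. So $k\mathbf{e}_{\pi(S)}-u\geq0$ for all $k\geq1$ and all $S\in\mathcal{A}_j(\Delta)$. This means $u_l\leq0$ for every $l\notin\bigcap_{S}\pi(S)=\pi\bigl(\bigcap_{S\in\mathcal{A}_j(\Delta)}S\bigr)$. The latter set is empty by \cref{lem:target-properties}, so $u\leq0$.

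\emph{Step 3: $u=0$.} This step is the main obstacle, since Step 2 alone does not rule out a nonzero $u$; I resolve it by symmetry. Running Steps 1 and 2 for $w^{-1}$ gives $w^{-1}(\mathbf{e}_{j'})=\mathbf{e}_{j''}+u''$ with $u''\leq0$ supported in $\Supp(\Delta)$. On the other hand, $w^{-1}(\mathbf{e}_{j'})=\mathbf{e}_j-w^{-1}(u)$. Here $w^{-1}(u)$ is a coordinate permutation of $u$ by Step 1, so $-w^{-1}(u)\geq0$ lies in $\ZZ^{\Supp(\Delta)}$. Comparing the two expressions gives $j''=j$ and $-w^{-1}(u)=u''\leq0$, hence $u=0$. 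Therefore $w(\mathbf{e}_j)=\mathbf{e}_{j'}$ for every target-only $j$. Together with Step 1, every standard basis vector is sent to a standard basis vector bijectively, so $w$ is a permutation matrix.
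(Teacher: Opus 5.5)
Your proposal is correct and follows the paper's proof essentially step for step. The steps are: extreme rays of the cone of ray directions give $w(\mathbf{e}_j)=\mathbf{e}_{\pi(j)}$ on $\Supp(\Delta)$; the induced map on $\ZZ^n/\ZZ^{\Supp(\Delta)}$ (your block $M$, the paper's map $C$) is a permutation because the $\beta$-part of every root of $\Delta'$ is supported in $\Supp(\Delta')$; the correction term is shown to be $\leq 0$ using $\bigcap_{S\in\mathcal{A}_j(\Delta)}S=\emptyset$ from \cref{lem:target-properties}; and running the same argument for $w^{-1}$ forces it to vanish. The only differences are cosmetic: you phrase the quotient step as a block-matrix argument, and you use the roots $k\mathbf{e}_S-\mathbf{e}_j$ for all $k\geq1$ where the paper takes $\beta=\mathbf{e}_S$.
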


\begin{proof}
Since $w$ is a bijection $\ZZ^n\to\ZZ^n$ mapping $\mathcal{R}(\Delta)$ onto $\mathcal{R}(\Delta')$ and preserving primitivity, it maps $\mathcal{D}(\Delta)$ bijectively onto $\mathcal{D}(\Delta')$, hence sends $\cone(\mathcal{D}(\Delta))$ onto $\cone(\mathcal{D}(\Delta'))$. By \cref{lem:rays}, $w$ maps $\RR_{\geq0}^{\Supp(\Delta)}$ onto $\RR_{\geq0}^{\Supp(\Delta')}$, taking extreme rays to extreme rays. Since $w$ preserves primitivity and the primitive generators of the extreme rays are standard basis vectors, there is a bijection $\pi\colon\Supp(\Delta)\to\Supp(\Delta')$ with
$$
w(\mathbf{e}_j)=\mathbf{e}_{\pi(j)}\qquad\text{for all } j\in\Supp(\Delta),
$$
and consequently $w(\ZZ^{\Supp(\Delta)})=\ZZ^{\Supp(\Delta')}$. For $\beta\in\ZZ^n_{\geq0}$ supported on $\Supp(\Delta)$ we write $\pi_*(\beta):=w(\beta)=\sum_l\beta_l\,\mathbf{e}_{\pi(l)}$.

We now determine the action of $w$ on target-only vertices. Since $|\Supp(\Delta)|=|\Supp(\Delta')|$, hence $|\mathcal{O}(\Delta)|=|\mathcal{O}(\Delta')|$, and $w$ induces an isomorphism
$$
C\colon\ZZ^n/\ZZ^{\Supp(\Delta)}\longrightarrow\ZZ^n/\ZZ^{\Supp(\Delta')},
$$
where the classes $\overline{\mathbf{e}}_j$, $j\in\mathcal{O}(\Delta)$ (resp.\ $j'\in\mathcal{O}(\Delta')$), form bases of the two sides. Fix $j\in\mathcal{O}(\Delta)$ and $S\in\mathcal{A}_j(\Delta)$, which is possible by \cref{lem:target-properties}; here $S=\emptyset$ is allowed, in which case $\mathbf{e}_S=0$. The image $w(\mathbf{e}_S-\mathbf{e}_j)\in\mathcal{R}(\Delta')$ has the form $\beta'-\mathbf{e}_{i'}$ with $\supp(\beta')\in\mathcal{A}_{i'}(\Delta')$, hence $\supp(\beta')\subseteq\Supp(\Delta')$. Reducing modulo $\ZZ^{\Supp(\Delta')}$ and using $w(\mathbf{e}_S)=\pi_*(\mathbf{e}_S)\in\ZZ^{\Supp(\Delta')}$ yields
$$
C(\overline{\mathbf{e}}_j)=\overline{\mathbf{e}}_{i'} .
$$
Since $C$ is an isomorphism and $\overline{\mathbf{e}}_j\neq0$, we must have $i'\in\mathcal{O}(\Delta')$; and $i'$ is determined by $C(\overline{\mathbf{e}}_j)$ alone, hence independent of the choice of $S$. Writing $i'=\tau(j)$, the map $\tau\colon\mathcal{O}(\Delta)\to\mathcal{O}(\Delta')$ satisfies $C(\overline{\mathbf{e}}_j)=\overline{\mathbf{e}}_{\tau(j)}$ and is therefore a bijection. Define $b_j\in\ZZ^{\Supp(\Delta')}$ by
$$
w(\mathbf{e}_j)=\mathbf{e}_{\tau(j)}+b_j,\qquad j\in\mathcal{O}(\Delta).
$$

We show that all entries of $b_j$ are nonpositive. Let $j\in\mathcal{O}(\Delta)$, $S\in\mathcal{A}_j(\Delta)$ and $\beta\in\ZZ^n_{\geq0}$ with $\supp(\beta)=S$ (so $\beta=0$ when $S=\emptyset$). Then $\beta-\mathbf{e}_j\in\mathcal{R}(\Delta)$ and
$$
w(\beta-\mathbf{e}_j)=\pi_*(\beta)-b_j-\mathbf{e}_{\tau(j)}\in\mathcal{R}(\Delta').
$$
The $\tau(j)$-th coordinate of this vector equals $-1$, since $\pi_*(\beta)$ and $b_j$ are supported on $\Supp(\Delta')$; a root has exactly one negative coordinate, located at its target (\cref{lem:root-data}), so $\tau(j)$ is the target and all remaining coordinates are nonnegative:
$$
(\pi_*(\beta))_l\ \geq\ (b_j)_l\qquad\text{for every } l\in[n].
$$
Taking $\beta=\mathbf{e}_S$ gives $b_j\leq\mathbf{e}_{\pi(S)}$: every entry of $b_j$ is at most $1$, with equality possible only at coordinates in $\pi(S)$. As $S\in\mathcal{A}_j(\Delta)$ was arbitrary, the set of coordinates where $b_j$ equals $1$ is contained in
$$
\bigcap_{S\in\mathcal{A}_j(\Delta)}\pi(S)=\pi\Bigl(\bigcap_{S\in\mathcal{A}_j(\Delta)}S\Bigr)=\emptyset
$$
by \cref{lem:target-properties}. Hence all entries of $b_j$ are $\leq0$.

It remains to show that $b_j=0\in\ZZ^n$ for every $j\in\mathcal{O}(\Delta)$. The matrix $w^{-1}$ satisfies the hypotheses with the roles of $\Delta$ and $\Delta'$ exchanged. By the preceding arguments, its support bijection is $\pi^{-1}$ and its target-only bijection is $\tau^{-1}$; writing $w^{-1}(\mathbf{e}_{\tau(j)})=\mathbf{e}_j+b^*_j$ with $b^*_j\in\ZZ^{\Supp(\Delta)}$ and applying $w$, we obtain $b_j=-\pi_*(b^*_j)$. By the same argument applied to $w^{-1}$ at the target-only vertex $\tau(j)\in\mathcal{O}(\Delta')$, all entries of $b^*_j$ are $\leq0$, hence all entries of $b_j=-\pi_*(b^*_j)$ are $\geq0$. Combined with the nonpositivity established above, $b_j=0$ for every $j\in\mathcal{O}(\Delta)$.

Therefore $w(\mathbf{e}_j)=\mathbf{e}_{\sigma(j)}$ for every $j\in[n]$, where $\sigma\in S_n$ is the permutation defined by $\sigma(j):=\pi(j)$ for $j\in\Supp(\Delta)$ and $\sigma(j):=\tau(j)$ for $j\in\mathcal{O}(\Delta)$. In particular, $w$ is a permutation matrix.
\end{proof}

\begin{example}\label{ex:full-needed}
The full root set is essential in \cref{thm:permutation}. Let $\Delta$ be the complex on $[4]$ with facets $\{1,2\}$ and $\{3,4\}$, so that $I_\Delta=(x_1x_3,\,x_1x_4,\,x_2x_3,\,x_2x_4)$: it is fully exposed (every vertex is a private vertex of its facet) and has no cone vertex, and
$$
\mathcal{R}(\Delta)=\{k\mathbf{e}_1-\mathbf{e}_2,\ k\mathbf{e}_2-\mathbf{e}_1,\ k\mathbf{e}_3-\mathbf{e}_4,\ k\mathbf{e}_4-\mathbf{e}_3 \,:\, k\geq1\}.
$$
The matrix
$$
w=\begin{pmatrix} 2&1&0&0\\ -1&0&0&0\\ 0&0&1&0\\ 0&0&0&1\end{pmatrix}\in\GL_4(\ZZ)
$$
preserves the square-free roots: $w(\mathbf{e}_1-\mathbf{e}_2)=\mathbf{e}_1-\mathbf{e}_2$, $w(\mathbf{e}_2-\mathbf{e}_1)=\mathbf{e}_2-\mathbf{e}_1$, and $w$ fixes $\pm(\mathbf{e}_3-\mathbf{e}_4)$. Yet $w(2\mathbf{e}_1-\mathbf{e}_2)=3\mathbf{e}_1-2\mathbf{e}_2\notin\mathcal{R}(\Delta)$, so $w$ does not preserve $\mathcal{R}(\Delta)$, consistently with \cref{thm:permutation}. Truncating the root set to its square-free part would break the reconstruction of the permutation for this (disconnected) complex; the asymptotic directions inside $\mathcal{R}(\Delta)$, captured by \cref{lem:rays}, carry the missing information. \rchange{The complex is drawn in \cref{fig:disjoint}: every vertex is private in its own facet, so $\Delta$ is fully exposed; but the two components can be rescaled independently, and that is what allows a non-permutation matrix to preserve the square-free roots while destroying $\mathcal{R}(\Delta)$.}
\begin{figure}[H]
\centering
\begin{tikzpicture}[scale=1.0, v/.style={circle,fill,inner sep=1.4pt}]
\draw (0,0)--(1.4,0); \draw (2.8,0)--(4.2,0);
\node[v,label=below:{\small $1$}] at (0,0) {};
\node[v,label=below:{\small $2$}] at (1.4,0) {};
\node[v,label=below:{\small $3$}] at (2.8,0) {};
\node[v,label=below:{\small $4$}] at (4.2,0) {};
\end{tikzpicture}
\caption{The disconnected fully exposed complex.}
\label{fig:disjoint}
\end{figure}
\end{example}

\begin{theorem}\label{thm:main}
Let $\Delta$ and $\Delta'$ be fully exposed simplicial complexes on $[n]$ and $[n']$. If there is an isomorphism of ind-groups
$$
\theta\colon\Aut(X_\Delta)\longrightarrow\Aut(X_{\Delta'}),
$$
then $n=n'$ and there is a permutation $\sigma\in S_n$ with $\sigma\cdot\Delta=\Delta'$. In particular $\Delta\cong\Delta'$ as abstract simplicial complexes and $X_\Delta\cong X_{\Delta'}$ as algebraic varieties.
\end{theorem}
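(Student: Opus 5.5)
The proof is an assembly of the results of \cref{sec:aut} and \cref{sec:main}, in the following order. First, apply \cref{thm:transfer}: part~\ref{tr.a} shows that $\theta(T_0)$ is an $n$-dimensional torus. Part~\ref{tr.b} then gives $n=n'$, and after replacing $\theta$ by its composite with an inner automorphism $c_g$ of $\Aut(X_{\Delta'})$ we may assume $\theta(T_0)=T_0'$. This replacement is harmless, since $c_g\circ\theta$ is again an isomorphism of ind-groups; conjugation by $g$ is an ind-variety automorphism because multiplication and inversion are ind-morphisms. Next, set $\vartheta:=\theta|_{T_0}$ and let $w:=(\vartheta^{-1})^*\in\GL_n(\ZZ)$. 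By \cref{prop:linear}, $w(\mathcal{R}(\Delta))=\mathcal{R}(\Delta')$.

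Since $\Delta$ and $\Delta'$ are both fully exposed on $[n]$, \cref{thm:permutation} applies and shows that $w$ is a permutation matrix. Concretely, there is $\sigma\in S_n$ with $w(\mathbf{e}_j)=\mathbf{e}_{\sigma(j)}$ for all $j$, i.e.\ $w$ acts on $\ZZ^n$ by $\sigma$ permuting coordinates, in the sense of \cref{cor:core-permuted}. Then $\sigma\cdot\mathcal{R}(\Delta)=\mathcal{R}(\Delta')$, and \cref{cor:core-permuted} yields $\sigma\cdot\Delta=\Delta'$. One detail needs care here: the action convention $\sigma\cdot\beta=\sum_l\beta_l\mathbf{e}_{\sigma(l)}$ used in \cref{lem:R-equivariant} must match the matrix $w$. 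With $w(\mathbf{e}_j)=\mathbf{e}_{\sigma(j)}$ the two agree; otherwise we would replace $\sigma$ by $\sigma^{-1}$.

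The final assertions follow directly. The permutation $\sigma$ gives $\Delta\cong\Delta'$. The automorphism $x_i\mapsto x_{\sigma(i)}$ of $\KK[\mathbf{x}]$ carries $I_\Delta$ onto $I_{\sigma\cdot\Delta}=I_{\Delta'}$, so $\KK[\Delta]\cong\KK[\Delta']$ and therefore $X_\Delta\cong X_{\Delta'}$.

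The genuinely hard steps, namely conjugacy of maximal tori (\cref{thm:tori}), the identification of root weights with $\mathcal{R}(\Delta)$ (\cref{prop:root-subgroups}), and the permutation rigidity of $w$ (\cref{thm:permutation}), are already in place. The only point I expect to require attention is the bookkeeping in the first two steps: checking that the conjugated $\theta$ still satisfies the hypotheses of \cref{prop:linear}, and that the identification of $X(T_0')$ with $\ZZ^n$ uses the standard coordinates of $\Delta'$. Both are immediate from the definitions.
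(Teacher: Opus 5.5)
Your proposal is correct and follows the paper's proof step for step. It uses \cref{thm:transfer} to get $n=n'$ and $\theta(T_0)=T_0'$, \cref{prop:linear} for $w(\mathcal{R}(\Delta))=\mathcal{R}(\Delta')$, \cref{thm:permutation} to make $w$ a permutation matrix, and \cref{cor:core-permuted} to conclude $\sigma\cdot\Delta=\Delta'$, with the variety isomorphism induced by $x_i\mapsto x_{\sigma(i)}$. Your extra checks are sound bookkeeping that the paper leaves implicit: that conjugating $\theta$ keeps it an ind-group isomorphism, and that $w(\mathbf{e}_j)=\mathbf{e}_{\sigma(j)}$ matches the action convention of \cref{lem:R-equivariant}.
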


\begin{proof}
By \cref{thm:transfer}, $n=n'$ and, after composing $\theta$ with an inner automorphism, $\theta(T_0)=T_0'$. By \cref{prop:linear}, there is $w\in\GL_n(\ZZ)$ with $w(\mathcal{R}(\Delta))=\mathcal{R}(\Delta')$. By \cref{thm:permutation}, $w$ is the permutation matrix of some $\sigma\in S_n$, so that $\sigma\cdot\mathcal{R}(\Delta)=\mathcal{R}(\Delta')$; by \cref{cor:core-permuted}, $\sigma\cdot\Delta=\Delta'$.

Finally, the permutation automorphism $x_i\mapsto x_{\sigma(i)}$ of $\KK[\mathbf{x}]$ maps $I_\Delta$ onto $I_{\sigma\cdot\Delta}=I_{\Delta'}$ and induces an isomorphism $\KK[\Delta]\cong\KK[\Delta']$, i.e., $X_\Delta\cong X_{\Delta'}$.
\end{proof}

\begin{remark}[Relation to M\"uller's theorem]\label{rem:muller-comparison}
\rchange{The closest precursor is \cite[Theorem~1]{Mul95}: two \emph{$T_1$-complexes} are
isomorphic if and only if the Lie algebras $\Der(\KK[\Delta])$ and $\Der(\KK[\Delta'])$ are, where
$\Delta$ is a $T_1$-complex when it has at least two vertices and, for every ordered pair of
vertices, some facet contains the first but not the second \cite[p.~135]{Mul95}; in the language
of \cref{subsec:closure}, when $\operatorname{cl}(\{j\})=\{j\}$ for every vertex $j$. The two
settings are closely related: the \emph{negative} roots of \cite[Proposition~3]{Mul95} are exactly
the elements of $\mathcal{R}(\Delta)$ (\cref{rem:muller-dictionary}), and the classes of complexes
do meet: the complex on $[6]$ with facets $\{1,2,3\},\{1,4,5\},\{2,4,6\},\{3,5,6\}$ is both
$T_1$ and fully exposed (\cref{fig:overlap}). The hypotheses are nevertheless independent, as two
examples show: no cone with at least two vertices is $T_1$
\cite[Proposition~4(ii)]{Mul95}, so the path with facets $\{1,2\},\{1,3\}$, and with it the
whole family $\AF^m\times X_\Gamma$, $m\geq1$, and every complex produced by
\cref{cor:stabilized}, is fully exposed but lies outside his theorem; conversely the boundary of
a triangle is $T_1$ yet fully hidden, so it lies outside ours. Neither theorem implies the other on
the overlap, since an abstract isomorphism of ind-groups carries no a priori compatibility at the
identity along which to differentiate it.}
\end{remark}
\rcomment{una locura lo largo dr la siguinte remark, yo me hubiera qudado solo con la
menci\'on en la introducci\'on. dejo una propuesta m\'as breve si se decide mantener (Yo
elimnar\'ia la remark).}

\begin{example}\label{ex:overlap}
The two hypotheses do overlap (\cref{fig:overlap}). The four shaded triangles are the facets $\{1,2,3\},\{1,4,5\},\{2,4,6\},\{3,5,6\}$ on $[6]$; any two of them meet in exactly one vertex. Removing any vertex from a facet leaves a pair of vertices contained in no other facet, so every facet is exposed and the complex is fully exposed; and each vertex is the intersection of the two facets through it, so the complex is also a $T_1$-complex in the sense of \cite{Mul95} (\cref{rem:muller-comparison}).
\begin{figure}[H]
\centering
\begin{tikzpicture}[scale=1.0, v/.style={circle,fill,inner sep=1.3pt}]
\fill[gray!16] (-0.605,0.215)--(0.605,0.215)--(0,-0.830)--cycle;
\fill[gray!16] (-0.605,0.215)--(0,2.4)--(-2.2,-1.4)--cycle;
\fill[gray!16] (0.605,0.215)--(0,2.4)--(2.2,-1.4)--cycle;
\fill[gray!16] (0,-0.830)--(-2.2,-1.4)--(2.2,-1.4)--cycle;
\draw (-0.605,0.215)--(0.605,0.215)--(0,-0.830)--cycle;
\draw (-0.605,0.215)--(0,2.4)--(-2.2,-1.4)--cycle;
\draw (0.605,0.215)--(0,2.4)--(2.2,-1.4)--cycle;
\draw (0,-0.830)--(-2.2,-1.4)--(2.2,-1.4)--cycle;
\node[v,label={[shift={(-0.10,0.02)}]left:{\small $1$}}] at (-0.605,0.215) {};
\node[v,label={[shift={(0.10,0.02)}]right:{\small $2$}}] at (0.605,0.215) {};
\node[v,label=below:{\small $3$}] at (0,-0.830) {};
\node[v,label=above:{\small $4$}] at (0,2.4) {};
\node[v,label=left:{\small $5$}] at (-2.2,-1.4) {};
\node[v,label=right:{\small $6$}] at (2.2,-1.4) {};
\end{tikzpicture}
\caption{A complex that is both fully exposed and $T_1$.}
\label{fig:overlap}
\end{figure}
\end{example}

The hypothesis cannot be removed (\cref{prop:same-aut}), but it is satisfied after one
stabilization. We first record the baseline, which needs no automorphism groups at all.

\begin{corollary}\label{cor:sr-cancellation}
Let $\Delta$ and $\Delta'$ be simplicial complexes on $[n]$ and $[n']$. If
$X_\Delta\times\AF^1\cong X_{\Delta'}\times\AF^1$, then $\Delta\cong\Delta'$ and
$X_\Delta\cong X_{\Delta'}$.
\end{corollary}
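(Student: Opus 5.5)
The plan is to pass from the varieties to their coordinate rings and then invoke the Bruns--Gubeladze theorem together with the coning lemma, exactly as announced in the introduction. First, by \eqref{eq:cone-ring}, for a new vertex $v$ we have $X_\Delta\times\AF^1\cong X_{\Delta\ast v}$, and likewise $X_{\Delta'}\times\AF^1\cong X_{\Delta'\ast v'}$ for a new vertex $v'$. The hypothesis therefore gives an isomorphism of affine varieties $X_{\Delta\ast v}\cong X_{\Delta'\ast v'}$. Since both are affine, this is equivalent to an isomorphism of $\KK$-algebras $\KK[\Delta\ast v]\cong\KK[\Delta'\ast v']$, that is, $\KK[\Delta][x_v]\cong\KK[\Delta'][x_{v'}]$. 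The standing convention, that there are no linear generators, is preserved by coning, as noted after \eqref{eq:cone-ring}. So both sides are Stanley-Reisner rings in the normalized form.

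Second, I would apply the theorem of Bruns--Gubeladze \cite{BG96}: an isomorphism of $\KK$-algebras between Stanley-Reisner rings forces an isomorphism of the underlying complexes. This gives $\Delta\ast v\cong\Delta'\ast v'$ as abstract simplicial complexes. Then \cref{lem:cone-cancel} (coning is injective on isomorphism classes) yields $\Delta\cong\Delta'$. In particular $n=n'$, and after relabeling there is $\sigma\in S_n$ with $\sigma\cdot\Delta=\Delta'$.

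Finally, the permutation automorphism $x_i\mapsto x_{\sigma(i)}$ of $\KK[\mathbf{x}]$ carries $I_\Delta$ onto $I_{\Delta'}$. Hence $\KK[\Delta]\cong\KK[\Delta']$ and $X_\Delta\cong X_{\Delta'}$, as in the last paragraph of the proof of \cref{thm:main}.

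I expect no real obstacle, since all the weight is carried by \cite{BG96}. The only point needing care is that \cite{BG96} is stated for arbitrary (ungraded) $\KK$-algebra isomorphisms and for complexes without the normalization issue. Here the normalization holds on both sides, so the citation applies directly.
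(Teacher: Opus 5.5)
Your proposal is correct and follows the paper's proof essentially verbatim. Both pass to $\KK[\Delta\ast v]\cong\KK[\Delta'\ast v']$ via \eqref{eq:cone-ring}, invoke Bruns--Gubeladze \cite{BG96} to get $\Delta\ast v\cong\Delta'\ast v'$, cancel the cone with \cref{lem:cone-cancel}, and then read off $\KK[\Delta]\cong\KK[\Delta']$.
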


\begin{proof}
Choose new vertices $v\notin[n]$ and $v'\notin[n']$. By \eqref{eq:cone-ring} the hypothesis reads
$\KK[\Delta\ast v]\cong\KK[\Delta'\ast v']$ as $\KK$-algebras, so $\Delta\ast v\cong\Delta'\ast v'$
by Bruns--Gubeladze \cite{BG96} and $\Delta\cong\Delta'$ by \cref{lem:cone-cancel}. Then
$\KK[\Delta]\cong\KK[\Delta']$, i.e.\ $X_\Delta\cong X_{\Delta'}$.
\end{proof}

The next statement is strictly stronger: isomorphic affine varieties have isomorphic automorphism ind-groups, so the hypothesis of \cref{cor:sr-cancellation} implies the one below.

\begin{corollary}\label{cor:stabilized}
Let $\Delta$ and $\Delta'$ be simplicial complexes on $[n]$ and $[n']$, not assumed fully exposed.
If
$$
\Aut\bigl(X_\Delta\times\AF^1\bigr)\ \cong\ \Aut\bigl(X_{\Delta'}\times\AF^1\bigr)
$$
as ind-groups, then $\Delta\cong\Delta'$ and $X_\Delta\cong X_{\Delta'}$.
\end{corollary}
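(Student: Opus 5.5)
The plan is to reduce to \cref{thm:main} by coning. Choose new vertices $v\notin[n]$ and $v'\notin[n']$. By \eqref{eq:cone-ring} we have $X_\Delta\times\AF^1\cong X_{\Delta\ast v}$ and $X_{\Delta'}\times\AF^1\cong X_{\Delta'\ast v'}$. An isomorphism of affine varieties induces an isomorphism of the automorphism ind-groups by conjugation, since it respects the filtrations up to a shift. The hypothesis therefore yields an isomorphism of ind-groups
$$
\Aut\bigl(X_{\Delta\ast v}\bigr)\cong\Aut\bigl(X_{\Delta'\ast v'}\bigr).
$$
We also need the cones to satisfy the standing convention: $\Delta\ast v$ and $\Delta'\ast v'$ have no linear generators in their Stanley-Reisner ideals, as noted after \eqref{eq:cone-ring}.

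Next we check the hypothesis of \cref{thm:main}. The vertex $v$ lies in every facet of $\Delta\ast v$, so it is a cone vertex. By \cref{lem:cone-roots}\ref{cr.3}, $\Delta\ast v$ is fully exposed, and likewise $\Delta'\ast v'$. Applying \cref{thm:main} then gives $n+1=n'+1$ and $\Delta\ast v\cong\Delta'\ast v'$ as abstract simplicial complexes.

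Finally, \cref{lem:cone-cancel} cancels the cone and gives $\Delta\cong\Delta'$. Transporting $I_\Delta$ along the induced relabelling of variables gives $\KK[\Delta]\cong\KK[\Delta']$, hence $X_\Delta\cong X_{\Delta'}$.

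No step seems genuinely hard, since the heavy lifting is in \cref{thm:main}. The only point needing care is the first one. The ind-group structure on $\Aut(X_\Delta\times\AF^1)$ is the one defined intrinsically for the affine variety, whereas \cref{thm:main} uses the structure coming from the Stanley-Reisner presentation of $\KK[\Delta\ast v]$. These agree because the ind-structure of \cite[Proposition~5.4.1]{FK18} is independent of the choice of generators: a change of generators bounds degrees in both directions, so the two filtrations are cofinal in each other.
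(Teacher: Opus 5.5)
Your proof is correct and follows the paper's argument step for step: identify $X_\Delta\times\AF^1$ with $X_{\Delta\ast v}$ via \eqref{eq:cone-ring}, note that the apex makes $\Delta\ast v$ fully exposed by \cref{lem:cone-roots}\ref{cr.3}, apply \cref{thm:main}, and cancel the cone with \cref{lem:cone-cancel}. Your remark that the ind-structure does not depend on the choice of generators makes explicit something the paper takes for granted; strictly, conjugation by a variety isomorphism bounds degrees multiplicatively rather than ``up to a shift'', but sending each filtration piece into some filtration piece is all that is needed.
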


\begin{proof}
Choose new vertices $v\notin[n]$ and $v'\notin[n']$. By \eqref{eq:cone-ring} we have
$X_{\Delta\ast v}\cong X_\Delta\times\AF^1$ and $X_{\Delta'\ast v'}\cong X_{\Delta'}\times\AF^1$,
and isomorphic affine varieties have isomorphic automorphism ind-groups; so the hypothesis reads
$\Aut(X_{\Delta\ast v})\cong\Aut(X_{\Delta'\ast v'})$. The apex $v$ lies in every facet of
$\Delta\ast v$, so $\Delta\ast v$ is fully exposed by \cref{lem:cone-roots}\ref{cr.3}, and likewise
for $\Delta'\ast v'$; also $\Delta\ast v$ satisfies the standing convention, by
\eqref{eq:cone-ring}. Hence \cref{thm:main} applies and gives $\Delta\ast v\cong\Delta'\ast v'$,
and \cref{lem:cone-cancel} gives $\Delta\cong\Delta'$, whence $\KK[\Delta]\cong\KK[\Delta']$ and
$X_\Delta\cong X_{\Delta'}$.
\end{proof}

The complexes behind the sharpness statement illustrate the point.

\begin{corollary}\label{cor:product-line}
Let $\Delta_1,\Delta_2$ be the complexes of \cref{ex:rigid-invisible}. Then the ind-groups
$\Aut(X_{\Delta_1}\times\AF^1)$ and $\Aut(X_{\Delta_2}\times\AF^1)$ are not isomorphic, although
$\Aut(X_{\Delta_1})\cong\Aut(X_{\Delta_2})$.
\end{corollary}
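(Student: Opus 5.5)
This is a direct consequence of \cref{cor:stabilized}, combined with the non-isomorphism of the two complexes. The second assertion, $\Aut(X_{\Delta_1})\cong\Aut(X_{\Delta_2})$, is already \cref{prop:same-aut}, so only the first needs proof. We argue by contradiction. Suppose there were an isomorphism of ind-groups $\Aut(X_{\Delta_1}\times\AF^1)\cong\Aut(X_{\Delta_2}\times\AF^1)$. By \cref{cor:stabilized}, which needs no hypothesis on the complexes, this would force $\Delta_1\cong\Delta_2$ and $X_{\Delta_1}\cong X_{\Delta_2}$.

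Next we show that $\Delta_1\not\cong\Delta_2$. Any invariant of abstract simplicial complexes will do. Two are immediate:
\begin{itemize}
\item $\Delta_1=P_5$ has four facets, all of size $2$, while $\Delta_2$ has the four facets $\{1,2\},\{2,4\},\{4,5\},\{3\}$, one of which is a singleton.
\item $\Delta_1$ is connected, while $\Delta_2$ has the isolated vertex $3$.
\end{itemize}
Equivalently, as recorded at the end of the proof of \cref{prop:same-aut}, the multisets of component dimensions $(2,2,2,2)$ and $(2,2,2,1)$ already show $X_{\Delta_1}\not\cong X_{\Delta_2}$. This contradicts the conclusion of \cref{cor:stabilized}, so the two automorphism ind-groups are not isomorphic.

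It is worth saying which earlier result carries the weight, although there is no real obstacle here. Under the identifications $X_{\Delta_r}\times\AF^1\cong X_{\Delta_r\ast v}$ from \eqref{eq:cone-ring}, the cones $\Delta_r\ast v$ are fully exposed by \cref{lem:cone-roots}\ref{cr.3}. Hence \cref{thm:main} applies to them, and \cref{lem:cone-cancel} strips off the apex. The only step to check is that $\Delta_1,\Delta_2$ satisfy the standing convention that every singleton is a face. This holds by construction in \cref{ex:rigid-invisible}: in $\Delta_2$ the vertex $3$ is itself a facet.
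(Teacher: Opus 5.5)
Your proposal is correct and follows the paper's own proof: the first assertion comes from \cref{cor:stabilized} together with $\Delta_1\not\cong\Delta_2$, which you detect through the facet cardinalities $(2,2,2,2)$ versus $(2,2,2,1)$, and the second is \cref{prop:same-aut}. The additional remarks about the cones being fully exposed and the singleton convention are accurate.
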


\begin{proof}
$\Delta_1\not\cong\Delta_2$, the multisets of facet cardinalities being $(2,2,2,2)$ and
$(2,2,2,1)$, so \cref{cor:stabilized} gives the first assertion; the second is
\cref{prop:same-aut}.
\end{proof}

In particular the automorphism ind-group of a stabilization is \emph{not} determined by the
automorphism ind-group: $\Aut(X_{\Delta_1})$ and $\Aut(X_{\Delta_2})$ are isomorphic while
$\Aut(X_{\Delta_1}\times\AF^1)$ and $\Aut(X_{\Delta_2}\times\AF^1)$ are not.

{\small
\bibliographystyle{alpha}
\bibliography{ref}

@article{Fil82,
  author = {R. P. Filipkiewicz},
  title = {Isomorphisms between diffeomorphism groups},
  journal = {Ergodic Theory Dynam. Systems},
  volume = {2},
  pages = {159--171},
  year = {1982}
}

@article{AG17,
  author = {I. Arzhantsev and S. Gaifullin},
  title = {The automorphism group of a rigid affine variety},
  journal = {Math. Nachr.},
  volume = {290},
  number = {5--6},
  pages = {662--671},
  year = {2017}
}

@article{BB66,
  author  = {A. Bia{\l}ynicki-Birula},
  title   = {Remarks on the action of an algebraic torus on $k^n$. {I}},
  journal = {Bull. Acad. Polon. Sci. S\'{e}r. Sci. Math. Astronom. Phys.},
  volume  = {14},
  pages   = {177--181},
  year    = {1966},
}

@article{AR15,
  author = {I. Arzhantsev and E. Romaskevich},
  title = {Additive actions on toric varieties},
  journal = {Proc. Amer. Math. Soc.},
  volume = {145},
  pages = {1865--1879},
  year = {2017}
}

@book{BG09,
  author = {W. Bruns and J. Gubeladze},
  title = {Polytopes, Rings, and {K}-Theory},
  series = {Springer Monographs in Mathematics},
  publisher = {Springer},
  address = {Dordrecht},
  year = {2009}
}

@article{BG96,
  author = {W. Bruns and J. Gubeladze},
  title = {Combinatorial invariance of {S}tanley--{R}eisner rings},
  journal = {Georgian Math. J.},
  volume = {3},
  number = {4},
  pages = {315--318},
  year = {1996}
}

@incollection{BG02,
  author = {W. Bruns and J. Gubeladze},
  title = {Polyhedral algebras, arrangements of toric varieties, and their groups},
  booktitle = {Computational Commutative Algebra and Combinatorics (Osaka, 1999)},
  series = {Adv. Stud. Pure Math.},
  volume = {33},
  publisher = {Math. Soc. Japan},
  address = {Tokyo},
  pages = {1--51},
  year = {2002}
}

@article{BS95,
  author = {P. Brumatti and A. Simis},
  title = {The module of derivations of a {S}tanley--{R}eisner ring},
  journal = {Proc. Amer. Math. Soc.},
  volume = {123},
  pages = {1309--1318},
  year = {1995}
}

@misc{CERUvS25,
  author = {N. Chen and L. Esser and A. Regeta and C. Urech and I. van Santen},
  title = {Characterizing varieties using birational transformations},
  year = {2025},
  note = {\href{https://arxiv.org/abs/2512.02837}{arXiv:2512.02837}}
}

@article{CRX23,
  author = {S. Cantat and A. Regeta and J. Xie},
  title = {Families of commuting automorphisms, and a characterization of the affine space},
  journal = {Amer. J. Math.},
  volume = {145},
  number = {2},
  pages = {413--434},
  year = {2023}
}

@article{Dem70,
  author = {M. Demazure},
  title = {Sous-groupes alg\'ebriques de rang maximum du groupe de {C}remona},
  journal = {Ann. Sci. \'Ecole Norm. Sup. (4)},
  volume = {3},
  pages = {507--588},
  year = {1970}
}

@article{DL24,
  author = {R. D{\'\i}az and A. Liendo},
  title = {On the automorphism group of non-necessarily normal affine toric varieties},
  journal = {Int. Math. Res. Not. IMRN},
  volume = {2024},
  number = {2},
  pages = {1624--1649},
  year = {2024}
}

@article{DLA24,
  author = {R. D{\'\i}az and G. Lucchini Arteche},
  title = {Finite-dimensional monomial algebras are determined by their automorphism group},
  journal = {Transform. Groups},
  year = {2026},
  pages = {1--14},
  note = {Published online, 25 May 2026},
  doi = {10.1007/s00031-026-09970-2}
}

@misc{DLMFR24,
  author = {R. D{\'\i}az and A. Liendo and G. Manzano-Flores and A. Regeta},
  title = {The automorphism groups of zero-dimensional monomial algebras},
  year = {2024},
  note = {\href{https://arxiv.org/abs/2408.02197}{arXiv:2408.02197}}
}

@article{DLR26,
  author = {R. D{\'\i}az and A. Liendo and A. Regeta},
  title = {On the characterization of affine toric varieties by their automorphism group},
  journal = {Israel J. Math.},
  year = {2026},
  pages = {1--23},
  note = {Published online, 19 February 2026},
  doi = {10.1007/s11856-026-2900-0}
}

@misc{FK18,
  author = {J.-P. Furter and H. Kraft},
  title = {On the geometry of the automorphism groups of affine varieties},
  year = {2018},
  note = {\href{https://arxiv.org/abs/1809.04175}{arXiv:1809.04175}}
}

@article{GPT14,
  author = {P. D. Gonz{\'a}lez P{\'e}rez and B. Teissier},
  title = {Toric geometry and the {S}emple--{N}ash modification},
  journal = {Rev. R. Acad. Cienc. Exactas F{\'\i}s. Nat. Ser. A Mat. RACSAM},
  volume = {108},
  number = {1},
  pages = {1--48},
  year = {2014}
}

@book{HH11,
  author = {J. Herzog and T. Hibi},
  title = {Monomial Ideals},
  series = {Graduate Texts in Mathematics},
  volume = {260},
  publisher = {Springer-Verlag London},
  year = {2011}
}

@article{Kra17,
  author = {H. Kraft},
  title = {Automorphism groups of affine varieties and a characterization of affine $n$-space},
  journal = {Trans. Moscow Math. Soc.},
  volume = {78},
  pages = {171--186},
  year = {2017}
}

@article{KRvS21,
  author = {H. Kraft and A. Regeta and I. van Santen},
  title = {Is the affine space determined by its automorphism group?},
  journal = {Int. Math. Res. Not. IMRN},
  volume = {2021},
  number = {6},
  pages = {4280--4300},
  year = {2021}
}

@article{Lie10,
  author = {A. Liendo},
  title = {Affine $\mathbb{T}$-varieties of complexity one and locally nilpotent derivations},
  journal = {Transform. Groups},
  volume = {15},
  number = {2},
  pages = {389--425},
  year = {2010}
}

@article{LRU23,
  author = {A. Liendo and A. Regeta and C. Urech},
  title = {Characterization of affine surfaces with a torus action by their automorphism groups},
  journal = {Ann. Sc. Norm. Super. Pisa Cl. Sci. (5)},
  volume = {24},
  number = {1},
  pages = {249--289},
  year = {2023}
}

@book{MS05,
  author = {E. Miller and B. Sturmfels},
  title = {Combinatorial Commutative Algebra},
  series = {Graduate Texts in Mathematics},
  volume = {227},
  publisher = {Springer-Verlag},
  address = {New York},
  year = {2005}
}

@article{Mul95,
  author = {G. M{\"u}ller},
  title = {Lie algebras attached to simplicial complexes},
  journal = {J. Algebra},
  volume = {177},
  number = {1},
  pages = {132--141},
  year = {1995}
}

@article{RUvS25,
  author = {A. Regeta and C. Urech and I. van Santen},
  title = {Group theoretical characterizations of rationality},
  journal = {Invent. Math.},
  volume = {241},
  pages = {681--716},
  year = {2025}
}

@article{RvS21,
  author = {A. Regeta and I. van Santen},
  title = {Characterizing smooth affine spherical varieties via the automorphism group},
  journal = {J. \'Ec. polytech. Math.},
  volume = {8},
  pages = {379--414},
  year = {2021}
}

@book{Eis95,
  author    = {David Eisenbud},
  title     = {Commutative Algebra with a View Toward Algebraic Geometry},
  series    = {Graduate Texts in Mathematics},
  volume    = {150},
  publisher = {Springer-Verlag},
  address   = {New York},
  year      = {1995},
}

@article{RvS25,
  author = {A. Regeta and I. van Santen},
  title = {Maximal commutative unipotent subgroups and a characterization of affine spherical varieties},
  journal = {J. Eur. Math. Soc.},
  year = {2025},
  doi = {10.4171/JEMS/1651},
  note = {Published online first}
}

@article{Tad09,
  author = {Y. Tadesse},
  title = {Derivations preserving a monomial ideal},
  journal = {Proc. Amer. Math. Soc.},
  volume = {137},
  number = {9},
  pages = {2935--2942},
  year = {2009}
}

@book{Bor91,
  author = {A. Borel},
  title = {Linear Algebraic Groups},
  series = {Graduate Texts in Mathematics},
  volume = {126},
  edition = {Second},
  publisher = {Springer-Verlag},
  address = {New York},
  year = {1991}
}

@book{Spr98,
  author    = {T. A. Springer},
  title     = {Linear Algebraic Groups},
  series    = {Progress in Mathematics},
  volume    = {9},
  edition   = {Second},
  publisher = {Birkh\"auser},
  address   = {Boston},
  year      = {1998}
}

@article{AC04,
  author  = {Altmann, Klaus and Christophersen, Jan Arthur},
  title   = {Cotangent cohomology of {Stanley--Reisner} rings},
  journal = {Manuscripta Math.},
  volume  = {115},
  number  = {3},
  pages   = {361--378},
  year    = {2004},
  doi     = {10.1007/s00229-004-0496-3}
}

@article{AFKKZ13,
  author  = {Arzhantsev, Ivan and Flenner, Hubert and Kaliman, Shulim and
             Kutzschebauch, Frank and Zaidenberg, Mikhail},
  title   = {Flexible varieties and automorphism groups},
  journal = {Duke Math. J.},
  volume  = {162},
  number  = {4},
  pages   = {767--823},
  year    = {2013},
  doi     = {10.1215/00127094-2080132}
}

@article{For98,
  author  = {Forman, Robin},
  title   = {Morse theory for cell complexes},
  journal = {Adv. Math.},
  volume  = {134},
  number  = {1},
  pages   = {90--145},
  year    = {1998}
}

@book{Fre17,
  author    = {Freudenburg, Gene},
  title     = {Algebraic Theory of Locally Nilpotent Derivations},
  edition   = {Second},
  series    = {Encyclopaedia of Mathematical Sciences},
  volume    = {136},
  publisher = {Springer},
  address   = {Berlin},
  year      = {2017}
}

@book{Koz08,
  author    = {Kozlov, Dmitry},
  title     = {Combinatorial Algebraic Topology},
  series    = {Algorithms and Computation in Mathematics},
  volume    = {21},
  publisher = {Springer},
  address   = {Berlin},
  year      = {2008}
}

@article{Vil90,
  author  = {Villarreal, Rafael H.},
  title   = {Cohen--Macaulay graphs},
  journal = {Math. Ann.},
  volume  = {66},
  number  = {3},
  pages   = {277--293},
  year    = {1990}
}

@article{Kal87,
  author  = {Kalai, Gil},
  title   = {Rigidity and the lower bound theorem {I}},
  journal = {Invent. Math.},
  volume  = {88},
  number  = {1},
  pages   = {125--151},
  year    = {1987},
  doi     = {10.1007/BF01405094}
}

@article{AC10,
  author  = {Altmann, Klaus and Christophersen, Jan Arthur},
  title   = {Deforming {Stanley--Reisner} schemes},
  journal = {Math. Ann.},
  volume  = {348},
  number  = {3},
  pages   = {513--537},
  year    = {2010},
  doi     = {10.1007/s00208-010-0490-x}
}

@article{FrH08,
  author  = {Francisco, Christopher A. and H{\`a}, Huy T{\`a}i},
  title   = {Whiskers and sequentially {Cohen--Macaulay} graphs},
  journal = {J. Combin. Theory Ser. A},
  volume  = {348},
  number  = {2},
  pages   = {304--316},
  year    = {2008},
}

@article{BGa25,
  author  = {Borovik, Viktoriia and Gaifullin, Sergey},
  title   = {Isolated torus invariants and automorphism groups of rigid varieties},
  journal = {J. Algebra},
  volume  = {666},
  pages   = {821--839},
  year    = {2025},
  doi     = {10.1016/j.jalgebra.2024.12.009}
}
}

\end{document}